\documentclass[10pt,reqno,draft]{amsart}

\usepackage{mathtools}
\usepackage[T1]{fontenc}

\usepackage{amsmath,amsfonts,amsbsy,amsgen,amscd,mathrsfs,amssymb,amsthm}

\usepackage{enumitem}
\SetLabelAlign{center}{\hfil#1\hfil}

\usepackage{bm}

\usepackage{stmaryrd}
\SetSymbolFont{stmry}{bold}{U}{stmry}{m}{n} 

\usepackage[usenames,dvipsnames]{xcolor}
\usepackage[colorlinks=true,citecolor=blue,linkcolor=blue]{hyperref}

\usepackage{tikz}

\usepackage[font=small,labelfont=bf]{caption}


\newtheorem{thm}{Theorem}[section]
\newtheorem{lem}[thm]{Lemma}

\newtheorem{prop}[thm]{Proposition}
\newtheorem{cor}[thm]{Corollary}

\theoremstyle{definition}

\newtheorem{defn}[thm]{Definition}

\theoremstyle{remark}

\newtheorem{example}[thm]{Example}

\newtheorem{rem}[thm]{Remark}
\newtheorem*{rem*}{Remark}
\newtheorem*{notation*}{Notation}

\numberwithin{equation}{section} 
\numberwithin{figure}{section}
\numberwithin{table}{section}

\newcommand{\Mk}{$(\mathrm{M}_k)$}
\newcommand{\Mkd}{$(\mathrm{M}_k^*)$}
\newcommand{\Ek}{$(\mathrm{E}_k)$}
\newcommand{\Ekd}{$(\mathrm{E}_k^*)$}
\newcommand{\Ck}{$(\mathrm{C}_k)$}
\newcommand{\V}{\mathsf{V}}
\newcommand{\affp}{\mathop{\mathrm{aff}'}}
\newcommand{\sspan}{\mathop{\mathrm{span}}}
\newcommand{\llangle}{\langle\!\langle}
\newcommand{\rrangle}{\rangle\!\rangle}

\begin{document}

\title{The Extremals of the Kahn-Saks Inequality}

\author{Ramon van Handel}
\address{Fine Hall 207, Princeton University, Princeton, NJ 08544, USA}
\email{rvan@math.princeton.edu}

\author{Alan Yan}
\address{Department of Mathematics, Harvard University, 
Cambridge, MA 02138, USA}
\email{alanyan@math.harvard.edu}

\author{Xinmeng Zeng}
\address{ICME, Stanford University, Stanford, CA 94305, USA}
\email{xinzeng@stanford.edu}

\begin{abstract}
A classical result of Kahn and Saks states that given any partially 
ordered set with two distinguished elements, the number of linear 
extensions in which the ranks of the distinguished elements differ by $k$ 
is log-concave as a function of $k$. The log-concave sequences that can 
arise in this manner prove to exhibit a much richer structure, however, 
than is evident from log-concavity alone. The main result of this paper is 
a complete characterization of the extremals of the Kahn-Saks inequality: 
we obtain a detailed combinatorial understanding of where and what kind of 
geometric progressions can appear in these log-concave sequences. This 
settles a partial conjecture of Chan-Pak-Panova, while the analysis 
uncovers new extremals that were not previously conjectured. The proof 
relies on a much more general geometric mechanism---a hard Lefschetz 
theorem for nef classes that was obtained in the setting of convex 
polytopes by Shenfeld and Van Handel---which forms a model for the 
investigation of such structures in other combinatorial problems.
\end{abstract}

\subjclass[2010]{06A07; 
		 52A39; 
                 52A40; 
                 52B05} 

\keywords{Partially ordered sets; log-concave sequences; extremal 
problems; Alexandrov-Fenchel inequality; hard Lefschetz theorem}

\maketitle

\thispagestyle{empty}

\section{Introduction}

A sequence $a_1,\ldots,a_n\ge 0$ is called \emph{log-concave} if
\begin{equation}
\label{eq:logconc}
	a_k^2 \ge a_{k-1}a_{k+1},\qquad\quad k=2,\ldots,n-1. 
\end{equation}
It was observed long ago that many integer sequences that arise in a 
remarkably broad range of combinatorial problems appear to be log-concave 
\cite{Sta89}. Whenever the same mathematical phenomenon arises in many 
different situations, one may wonder whether there is a more fundamental 
underlying mechanism that explains its appearance. The discovery of such 
an explanation---that log-concavity arises due to the presence of 
combinatorial analogues of the Hodge-Riemann relations of algebraic 
geometry---has led to a striking series of recent breakthroughs in the 
understanding of log-concavity in combinatorics \cite{AHK17,Huh18,Huh20}.

While the ubiquity of this mechanism was understood only recently, its 
first appearance dates back to Stanley's inequalities for matroids and 
posets \cite{Sta81}. Stanley studied these problems by expressing the 
combinatorial sequences in question as mixed volumes of convex polytopes, 
whose log-concavity follows from the Alexandrov-Fenchel inequality 
\cite{Sch14}. This classical result of convex geometry, which is a 
far-reaching generalization of the isoperimetric inequality, may also be 
viewed as a special instance of the Hodge-Riemann relations for toric 
varieties (see, e.g., \cite{Huh18} and \cite[\S 5.4]{Ful93}). One 
interpretation of the basic insight behind the recent developments in 
\cite{AHK17,Huh18,Huh20,CP21,CP22} is that while most combinatorial 
problems cannot be reformulated in terms of convex polytopes, one can 
often still prove Alexandrov-Fenchel inequalities directly in the 
combinatorial context.

Even within convex geometry, however, the Alexandrov-Fenchel inequality 
has itself been the subject of an open problem that dates back to 
Alexandrov's original paper \cite{Ale37,Sch85}: when does equality hold? 
This problem is fundamental to the interpretation of the 
Alexandrov-Fenchel inequality as an isoperimetric theorem, as it 
characterizes which bodies are extremal for the geometric quantities that 
appear in the inequality. Recently, this long-standing problem was 
completely resolved in the setting of convex polytopes by Shenfeld and the 
first author \cite{SvH23}. This provides a mechanism for obtaining new 
information on log-concave sequences that arise from the 
Alexandrov-Fenchel inequality: as equality in \eqref{eq:logconc}
$$
	a_k^2 = a_{k-1}a_{k+i} 
	\qquad\Longleftrightarrow\qquad
	\frac{a_{k+1}}{a_k} = \frac{a_k}{a_{k-1}}
$$
corresponds to a geometric progression (when $a_k>0$), the 
characterization of the equality cases provides information on where and 
what kind of geometric progressions can appear in a log-concave sequence. 
The solution of the extremal problem for the Alexandrov-Fenchel inequality 
was exploited in \cite[\S 15]{SvH23} and \cite{MS23} to obtain a detailed 
combinatorial characterization of the geometric progressions that can 
appear in Stanley's poset inequalities \cite{Sta81}, revealing a much 
richer structure in these sequences than is evident from log-concavity 
alone.

The aim of this paper is to develop further insight into such phenomena. 
We investigate a classical log-concavity inequality for posets due to Kahn 
and Saks that plays a central role in their work on sorting problems 
\cite{KS84}. As the proof of this inequality is a direct modification of 
that of Stanley's poset inequalities, one may expect that the 
characterization of its extremals will be similar as well. Surprisingly, 
however, the Kahn-Saks inequality turns out to be much more delicate, and 
its extremals exhibit unexpected new features that are not present in 
Stanley's inequalities. Our results confirm a partial conjecture of 
Chan-Pak-Panova \cite{CPP23}, and uncover further extremals that were not 
previously conjectured. The key challenge in the proofs is to understand 
the geometric features that cause the rich structure of the extremals of 
the Kahn-Saks inequality to appear.

Beside providing a case study of geometric progressions in log-concave 
sequences, our results and those of \cite{SvH23,MS23} may serve as a model 
for the investigation of such phenomena in a broader context. Like the 
Alexandrov-Fechel inequality itself, its equality characterization admits 
an algebraic interpretation as a hard Lefschetz theorem for nef classes 
\cite[\S 16.2]{SvH23}. The development of analogues of this mechanism 
outside convex geometry could provide a common explanation for the 
appearance of geometric progressions in much more general situations (see 
\cite{HX23} for recent progress). At the same time, let us note that 
despite the recent advances in establishing log-concavity in 
combinatorics, no other method appears as of yet to be able to recover the 
Kahn-Saks inequality \cite[\S 7.2]{CP22}. Its rich extremal structure, 
which perches it at the cusp of log-concavity, may help explain why this 
is the case.

\subsection{Main results}

Throughout this paper, we consider an arbitrary partially ordered set 
(poset) $P$ with $n$ elements. Recall that a \emph{linear extension} of 
$P$ is a bijection $f:P\to[n]$ such that $f(z)<f(z')$ whenever $z<z'$.

In the following, we fix two distinguished elements $x,y\in P$ such that 
$x\not\ge y$. For any $k=1,\ldots,n-1$, we denote by $N_k$ the number of 
linear extensions $f$ of $P$ so that $f(y)-f(x)=k$. The fundamental result 
in this setting, due to Kahn and Saks \cite[Theorem 2.5]{KS84}, states 
that the sequence $N_1,\ldots,N_{n-1}$ is log-concave.

\begin{thm}[Kahn-Saks inequality]
\label{thm:ks}
$N_k^2\ge N_{k-1}N_{k+1}$ for $k=2,\ldots,n-2$.
\end{thm}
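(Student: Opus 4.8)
The plan is to realize the sequence $N_1,\ldots,N_{n-1}$ as a sequence of mixed volumes of convex polytopes and then invoke the Alexandrov-Fenchel inequality. The standard device, going back to Stanley, is the order polytope. Given the poset $P$ on $n$ elements with distinguished $x,y$, consider the chain polytope or order polytope associated to the poset $P\setminus\{x,y\}$, but with the constraints coming from $x$ and $y$ encoded as two extra ``slack'' directions. Concretely, for a linear extension $f$, the condition $f(y)-f(x)=k$ should be reflected in a section of a suitable polytope by a hyperplane; integrating the indicator of ``$f(y)-f(x)\le k$'' over all extensions gives the partial sums $\sum_{j\le k}N_j$, and these partial sums should themselves be expressible as mixed volumes $\V(K_1,\ldots,K_{n-2},L_k)$ where only the last body $L_k$ varies with $k$ and does so affinely: $L_k = (1-t)A + tB$ for an appropriate parametrization. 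The first main step is therefore to carry out this geometric encoding carefully and verify that $N_k$ (or a rescaling thereof) equals a second difference of such mixed volumes, which by multilinearity of mixed volume is again a mixed volume of the form $\V(K_1,\ldots,K_{n-2},C)$ with $C$ independent of $k$ in its ``shape'' but with $N_{k-1},N_k,N_{k+1}$ sitting in the positions of $\V(\ldots,A,A)$, $\V(\ldots,A,B)$, $\V(\ldots,B,B)$ of a quadratic form in two fixed bodies $A,B$.

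Once this reduction is in place, the inequality $N_k^2\ge N_{k-1}N_{k+1}$ is precisely the Alexandrov-Fenchel inequality $\V(A,B,C_1,\ldots,C_{n-2})^2 \ge \V(A,A,C_1,\ldots,C_{n-2})\,\V(B,B,C_1,\ldots,C_{n-2})$ applied to the polytopes produced in the first step. So the second step is simply to quote Alexandrov-Fenchel (see \cite{Sch14}) and check that the dictionary lines up with the correct indices, i.e.\ that $k$ ranges over $2,\ldots,n-2$ exactly when the degenerate boundary terms $N_0,N_n$ would otherwise intrude. I would also need to handle the normalization constants (the volumes carry dimension-dependent factorials) and confirm they cancel in the ratio, which they do because the same constant multiplies all three terms $N_{k-1},N_k,N_{k+1}$.

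The main obstacle, and the only genuinely delicate point, is constructing the polytopes so that the combinatorial count $N_k$ comes out correctly \emph{with the right linear dependence on $k$}. The subtlety specific to Kahn-Saks (as opposed to Stanley's original inequality, where one fixes the rank of a single element) is that we are constraining a \emph{difference} of two ranks $f(y)-f(x)$, so the natural polytope lives one dimension lower and the affine family $L_k$ must interpolate between two order polytopes corresponding to the two ``extreme'' relative positions of $x$ and $y$. Getting the vertices and the defining inequalities of this interpolating family exactly right—so that lattice-point counts become continuous volumes via the usual Ehrhart/transfer argument and the second difference collapses cleanly—is where the real work lies. Everything after that is a formal appeal to Alexandrov-Fenchel.
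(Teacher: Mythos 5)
Your high-level strategy---slicing the order polytope at the two hyperplanes $t_y-t_x=0$ and $t_y-t_x=1$, expressing $N_k$ as mixed volumes of the resulting polytopes, and invoking Alexandrov--Fenchel---is exactly the paper's (and Kahn--Saks's) route. However, the intermediate scaffolding you sketch cannot work as stated. You propose that the partial sums $\sum_{j\le k}N_j$ equal mixed volumes $\V(K_1,\ldots,K_{n-2},L_k)$ in which only the last body $L_k=(1-t)A+tB$ depends affinely on $k$. By multilinearity of mixed volume, this would make the partial sums an affine function of $k$, hence $N_k$ (their \emph{first} difference, not the second, as you write) constant in $k$; and since the partial sums are eventually constant this would even force $N_k\equiv 0$. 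So no encoding of this form exists, and ``$N_k$ is a second difference with one affinely varying body'' is not a correct picture.

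What actually holds (Lemma~\ref{lem:mvol}, from \cite{KS84}) is $N_k=(n-1)!\,\V(K[n-k],L[k-1])$ with $K=O_P\cap\{t_y=t_x\}$ and $L=O_P\cap\{t_y-t_x=1\}$ both fixed; only the \emph{multiplicities} of $K$ and $L$ inside the $(n-1)$-fold mixed volume vary with $k$. The desired log-concavity is then a verbatim instance of Alexandrov--Fenchel,
$$
\V(K,L,K[n-k-1],L[k-2])^2\;\ge\;\V(K,K,K[n-k-1],L[k-2])\,\V(L,L,K[n-k-1],L[k-2]),
$$
and the ``fixed'' bodies $C_1,\ldots,C_{n-3}$ in your formulation are \emph{not} independent of $k$---the split between copies of $K$ and copies of $L$ shifts with $k$---but that is harmless since Alexandrov--Fenchel holds for every choice of them. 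To repair your proposal: drop the affine family $L_k$ and the partial-sum detour entirely, prove the mixed-volume identity for $N_k$ directly via the Ehrhart/transfer-map argument you allude to applied to the slices of $O_P$, and then apply Alexandrov--Fenchel for $2\le k\le n-2$, which is precisely the range where $n-k-1\ge 0$, $k-2\ge 0$, and $N_{k+1}$ is still defined.
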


The aim of our main results is to characterize when equality 
$N_k^2=N_{k-1}N_{k+1}$ holds in the Kahn-Saks inequality.
Before we proceed to their formulation, let us clarify the 
notation that will be used throughout the paper.

\begin{notation*}
Given a clause $C$, we denote by $P_C$ the subset of elements 
of the poset $P$ that satisfy this clause. For example,
\begin{align*}
	P_{\le x} & := \{\omega\in P:\omega\le x\}, \\
	P_{x<\cdot<z} & := \{\omega\in P:x<\omega<z\}, \\
	P_{>x,\parallel y} & := \{\omega\in P:\omega>x\mbox{ and }
	\omega\parallel y\},
\end{align*}
etc. We use the symbol $z\parallel z'$ to denote that $z$ is incomparable 
to $z'$, and write $z\lessdot z'$ to indicate that $z'$ covers $z$ (that 
is, that $z<z'$ and $P_{z<\cdot<z'}=\varnothing$).
\end{notation*}

We begin by observing that equality in Theorem \ref{thm:ks} holds 
trivially when $N_k=0$. The following lemma characterizes when this 
happens (cf.\ \cite[Theorem 8.5]{CPP23}).

\begin{lem}[Vanishing condition]
\label{lem:vanish}
For any $k\in [n-1]$, the following are equivalent.
\begin{enumerate}[leftmargin=*,itemsep=\topsep,label=\alph*.]
\item $N_k=0$.
\item $|P_{<x}|+|P_{>y}|>n-k-1$ or $|P_{x<\cdot<y}|>k-1$.
\end{enumerate}
\end{lem}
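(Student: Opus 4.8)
The plan is to show that the set of integers realized as $f(y)-f(x)$ by the linear extensions $f$ of $P$ is exactly the interval $[\,|P_{x<\cdot<y}|+1,\ n-1-|P_{<x}|-|P_{>y}|\,]$. Since $P_{<x}$, $P_{x<\cdot<y}$, $P_{>y}$ and $\{x,y\}$ are pairwise disjoint --- this is where the hypothesis $x\not\ge y$ enters --- this interval is nonempty, and clause (b) is precisely the statement that $k$ lies outside it.

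The implication (b)$\,\Rightarrow\,$(a) is the easy half. In a linear extension $f$ with $f(y)-f(x)=k$, the set $P_{<x}$ is forced into the positions $\{1,\dots,f(x)-1\}$, the set $P_{>y}$ into $\{f(y)+1,\dots,n\}$, and $P_{x<\cdot<y}$ into the $k-1$ positions strictly between $f(x)$ and $f(y)$; counting positions gives $|P_{<x}|+|P_{>y}|\le n-k-1$ and $|P_{x<\cdot<y}|\le k-1$, which is the negation of (b).

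For the converse, suppose $|P_{<x}|+|P_{>y}|\le n-k-1$ and $|P_{x<\cdot<y}|\le k-1$; we must build a linear extension $f$ with $f(y)-f(x)=k$. First I would record the translation between linear extensions and partitions: such an $f$ is the same data as an ordered partition $P\setminus\{x,y\}=L\sqcup C\sqcup R$ with $|C|=k-1$ for which $L$, $L\cup\{x\}$, $L\cup\{x\}\cup C$ and $L\cup\{x\}\cup C\cup\{y\}$ are all down-sets of $P$ (equivalently, the map $P\to\{1<\dots<5\}$ collapsing $L,\{x\},C,\{y\},R$ to $1,\dots,5$ is order-preserving); one passes from the partition to $f$ by concatenating a linear extension of $L$, then $x$, then one of $C$, then $y$, then one of $R$. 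It therefore suffices to produce such a partition. I would start from the ``most compressed'' choice $C_{0}=P_{x<\cdot<y}$, $L_{0}=P_{<x}\cup\{w:w\parallel x,\ w<y\}$, $R_{0}=P\setminus(L_{0}\cup\{x\}\cup C_{0}\cup\{y\})$; a short case check verifies the four down-set conditions, and $|C_{0}|=|P_{x<\cdot<y}|\le k-1$. Then, as long as the current block $C$ satisfies $|C|<k-1$, I claim one element can always be moved into it: either a maximal element of $L$ that is not below $x$, transferred from $L$ to $C$, or dually a minimal element of $R$ that is not above $y$, transferred from $R$ to $C$. Each such move preserves all four down-set conditions and raises $|C|$ by one; a movable element is unavailable only when $L=P_{<x}$ and $R=P_{>y}$, in which case $|C|=n-2-|P_{<x}|-|P_{>y}|\ge k-1$ by hypothesis. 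Hence one can keep going until $|C|=k-1$, which yields the required $f$ and shows $N_k>0$.

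The main obstacle is the combinatorial core hidden in ``a short case check'': verifying that $(L_{0},C_{0},R_{0})$ is a valid partition, and that the two elementary moves preserve validity. Both rest on a pair of structural facts that follow from $x\not\ge y$ --- the disjointness of $P_{<x},P_{x<\cdot<y},P_{>y}$, and the fact that no element incomparable to $y$ can sit below an element of $P_{x<\cdot<y}$, nor any element incomparable to $x$ above one. Granting these, each step reduces to the routine assertion that deleting an appropriate maximal element from a down-set, or adjoining an appropriate minimal one to its complement, again yields a down-set. The concluding bookkeeping --- that lying outside $[\,|P_{x<\cdot<y}|+1,\ n-1-|P_{<x}|-|P_{>y}|\,]$ is literally clause (b) --- is immediate.
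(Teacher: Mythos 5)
Your proof is correct, and it takes a genuinely different route from the paper. The paper derives Lemma~\ref{lem:vanish} as a corollary of the geometric machinery: it first establishes the mixed-volume criterion for vanishing (Lemma~\ref{lem:vanishgeom}, which rests on \cite[Theorem~5.1.8]{Sch14}), then computes the affine hulls of $K$, $L$, and $K+L$ using the auxiliary Lemmas~\ref{lem:linxmid} and~\ref{lem:linxend}, obtaining $\dim K = n-1-|P_{x<\cdot<y}|$, $\dim L = n-2-|P_{<x}|-|P_{>y}|$, $\dim(K+L)=n-1$, from which the combinatorial condition drops out. You instead argue entirely combinatorially, with no reference to polytopes: you characterize the set of achievable values $f(y)-f(x)$ as the interval $[\,|P_{x<\cdot<y}|+1,\ n-1-|P_{<x}|-|P_{>y}|\,]$ by encoding a linear extension with $f(y)-f(x)=k$ as a filtration of down-sets $L\subseteq L\cup\{x\}\subseteq L\cup\{x\}\cup C\subseteq L\cup\{x\}\cup C\cup\{y\}$ with $|C|=k-1$, exhibiting the most-compressed admissible filtration, and growing $|C|$ one element at a time via local moves that terminate exactly when $L=P_{<x}$ and $R=P_{>y}$. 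I checked the details: disjointness of $P_{<x},P_{x<\cdot<y},P_{>y},\{x,y\}$ uses $x\not\ge y$; the initial choice $L_0=P_{<x}\cup P_{<y,\parallel x}$, $C_0=P_{x<\cdot<y}$ does satisfy all four down-set conditions; and each transfer of a maximal element of $L$ not below $x$ (or dually a minimal element of $R$ not above $y$) preserves them and is available precisely unless $L=P_{<x}$ and $R=P_{>y}$. Your argument is more elementary and self-contained, and gives the slightly stronger statement that the realized values of $f(y)-f(x)$ form an interval; the paper's route is less direct but is the one the authors need anyway, since the face-dimension calculations it is built on are reused throughout section~\ref{sec:face} to identify $k$-extreme vectors.
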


The entire difficulty of the problem lies in characterizing the nontrivial 
equality cases, that is, equality in Theorem \ref{thm:ks} with $N_k>0$. To 
this end, we first characterize what kinds of geometric progressions 
can appear. In the following results, we take for granted that
$2\le k\le n-2$ as in Theorem \ref{thm:ks}.

\begin{thm}[Geometric progressions]
\label{thm:maingeom}
If $N_k>0$, the following are equivalent:
\begin{enumerate}[leftmargin=*,itemsep=\topsep,label=\alph*.]
\item $N_k^2 = N_{k-1}N_{k+1}$.
\item Either $N_{k+1}=N_k=N_{k-1}$, or $N_{k+1}=2N_k=4N_{k-1}$.
\end{enumerate}
\end{thm}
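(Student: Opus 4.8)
The plan is to realize the sequence $N_1,\ldots,N_{n-1}$ as (a constant multiple of) a sequence of mixed volumes of convex polytopes, and then to apply the equality characterization of the Alexandrov--Fenchel inequality from \cite{SvH23}. The standard construction is the order polytope: to the poset $P$ (together with the distinguished elements $x,y$) one associates a polytope $\mathcal{O}(P)$ in $\mathbb{R}^P$ and a family of auxiliary Minkowski summands so that $N_k$ equals, up to a combinatorial normalization, a mixed volume $\V(K_1,\ldots,K_{k},L_1,\ldots,L_{n-2-k},M,M)$ in which the ``direction'' of the two repeated bodies $M$ is dual to the pair $(x,y)$. This is precisely the set-up used by Kahn--Saks \cite{KS84} and revisited in \cite{CP22,CPP23}; one checks that the hypotheses of the Alexandrov--Fenchel inequality are met, so that $\V^2 \ge \V_{-}\V_{+}$ reproduces Theorem \ref{thm:ks}, and equality $N_k^2 = N_{k-1}N_{k+1}$ translates into the AF equality case for this tuple.

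Next I would invoke the dichotomy supplied by \cite[Theorem 2.13]{SvH23} (or the hard Lefschetz theorem for nef classes, \cite[\S 16.2]{SvH23}): for such a mixed-volume equality to hold, the pair of repeated bodies $M$ must be degenerate along the relevant subspace in one of a short list of prescribed ways. Concretely, one extracts from the equality case that the faces of the order polytope in the $M$-direction must be combinatorially rigid, which on the poset side forces a strong structural constraint: the elements that can ``move'' between $x$ and $y$ in a linear extension counted by $N_k$ form a highly restricted configuration. The two cases in (b) should correspond exactly to the two possible degenerate configurations — roughly, either there is essentially no freedom, giving a constant sequence $N_{k+1}=N_k=N_{k-1}$, or there is exactly one ``free'' element whose two placements double the count at each step, giving $N_{k+1}=2N_k=4N_{k-1}$. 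The direction (b) $\Rightarrow$ (a) is then a direct verification: in each of the two listed scenarios one writes down an explicit bijection (in the constant case, between the extensions counted by $N_{k-1}$, $N_k$, $N_{k+1}$; in the doubling case, a two-to-one correspondence) and checks that the stated identities hold and force $N_k^2 = N_{k-1}N_{k+1}$.

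The main obstacle will be the direction (a) $\Rightarrow$ (b): translating the geometric rigidity produced by \cite{SvH23} into the sharp combinatorial trichotomy-collapsed-to-dichotomy on $P$. The equality characterization for Alexandrov--Fenchel is itself stated in terms of ``$K$-degenerate'' directions and a delicate inductive criterion, and the difficulty is that the order polytope has a rich face structure, so one must carefully identify which faces are relevant, rule out the spurious degenerate configurations, and verify that the surviving ones are exactly the two in (b). In particular, one must explain why no intermediate ratio (such as $N_{k+1}=3N_k$, or a geometric progression with a non-integer ratio) can occur — this is the ``unexpectedly delicate'' point flagged in the introduction, and it is presumably where the analysis departs from the Stanley-inequality case treated in \cite{MS23}. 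I expect the bulk of the work to consist of a careful case analysis of the local combinatorics around $x$ and $y$ (governed by $P_{<x}$, $P_{>y}$, $P_{x<\cdot<y}$, and the incomparability classes appearing in Lemma \ref{lem:vanish}), organized so that the geometric input from \cite{SvH23} is applied to the minimal faces on which the relevant bodies fail to be full-dimensional.
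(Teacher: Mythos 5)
Your high-level plan coincides with the paper's (Kahn--Saks mixed-volume representation, Alexandrov--Fenchel, the equality characterization of \cite{SvH23}), but as written it is a plan, not a proof: the one step that actually constitutes the content of Theorem~\ref{thm:maingeom} --- namely, \emph{why the ratio must be exactly $1$ or $2$} --- is the step you flag as ``the main obstacle'' and then leave unresolved. You write that ``one must explain why no intermediate ratio \ldots can occur'' without supplying the explanation. That is precisely the gap.

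Here is what is missing. The equality characterization of \cite{SvH23} (used in Proposition~\ref{prop:geom}) does not directly hand you ``a short list of degenerate configurations''; it produces a scaling factor $a>0$ and a translation vector $v\in V$ with $h_K(u)=h_{aL+v}(u)$ for all $k$-extreme $u$. The paper's argument then proceeds in two stages. First (Lemmas~\ref{lem:eqextr}, \ref{lem:vend}, \ref{lem:vmid}), the coordinate, transition, and anchor vectors catalogued in section~\ref{sec:face} impose explicit linear constraints on the entries of $v$: notably $v_z=0$ on $P_{<x}\cup P_{<y,\parallel x}$ and $v_z=1-a$ on $P_{>y}\cup P_{>x,\parallel y}$, while various anchor constraints pin $v_{xy}$ to one of $1-\tfrac{3a}{2}$, $\tfrac{a}{2}$, $-\tfrac{a}{2}$, $1-\tfrac{a}{2}$ depending on which structural alternatives fail. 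Second (Lemma~\ref{lem:nothalf}), an elementary arithmetic argument shows these constraints are mutually inconsistent for $a>0$, $a\ne\tfrac12$, unless either the $(\mathscr M),(\mathscr E)$ pair or the $(\mathscr M^*),(\mathscr E^*)$ pair holds; and either pair forces, via the sufficiency direction of Theorem~\ref{thm:mainflat}, the flat case $a=1$. Nothing in your proposal identifies this $(\mathscr M),(\mathscr M^*),(\mathscr E),(\mathscr E^*)$ dichotomy or the arithmetic on $v_{xy}$ that rules out intermediate ratios. Your heuristic that the doubling case corresponds to ``exactly one free element whose two placements double the count'' is also not the correct mechanism: as the bijective proof of Theorem~\ref{thm:maindouble} (b)$\Rightarrow$(a) shows, the doubling arises from $2^{\ell-1}$ choices of interleaving, not from a single binary choice.

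Two smaller points. The mixed-volume set-up you describe, with bodies $K_1,\ldots,K_k,L_1,\ldots,L_{n-2-k},M,M$, is more complicated than the Kahn--Saks construction actually used, namely $N_k=(n-1)!\,\V(K[n-k],L[k-1])$ with $K,L$ the two slices $t_y=t_x$ and $t_y-t_x=1$ of the order polytope; you should double-check which tuple you are feeding to Alexandrov--Fenchel. And the direction (b)$\Rightarrow$(a) of Theorem~\ref{thm:maingeom} requires no bijection at all: it is pure arithmetic ($N_k^2=N_{k-1}N_{k+1}$ is immediate from either of the two progressions). The bijective argument you allude to belongs to the proof of Theorem~\ref{thm:maindouble}, not this theorem.
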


That is, only two types of geometric progressions are possible: flat and 
doubling progressions. We will characterize each of these situations 
separately. In order to formulate the equality conditions, we define a 
number of structural properties of the poset $P$ that will appear in 
different combinations below.

\begin{defn}
\label{defn:conditions}
We define the following properties:
\begin{enumerate}[labelwidth=\widthof{\Mkd},itemsep=\topsep,align=center]
\item[\Mk] $|P_{<z}|+|P_{>y}|>n-k$ for all $z\in P_{>x,\not\ge y}$.
\item[\Mkd] $|P_{>z}|+|P_{<x}|>n-k$ for all $z\in P_{<y,\not\le x}$.
\item[\Ek] $|P_{z<\cdot<y}\cup\{x\}|>k$ for all $z\in P_{<x}$,
and $|P_{<y}\cup\{x\}|>k$.
\item[\Ekd] $|P_{x<\cdot<z}\cup\{y\}|>k$ for all $z\in P_{>y}$,
and $|P_{>x}\cup\{y\}|>k$.
\item[\Ck] $|P_{z<\cdot<y}|+|P_{x<\cdot<z'}|>k-2$ for all
$z\in P_{<y,\parallel x}$, $z'\in P_{>x,\parallel y}$ with $z<z'$.
\end{enumerate}
\end{defn}

We first characterize the flat progressions, settling a conjecture of 
Chan-Pak-Panova \cite[Conjecture~8.7]{CPP23} (up to minor corrections, see 
Remark \ref{rem:conditions}).

\begin{thm}[Flat progressions]
\label{thm:mainflat}
If $N_k>0$, the following are equivalent:
\begin{enumerate}[leftmargin=*,itemsep=\topsep,label=\alph*.]
\item $N_{k+1}=N_k=N_{k-1}$.
\item
There is an element $z\in\{x,y\}$ such that for every linear extension $f$ 
of $P$ with $f(y)-f(x)=k$, there exist $u,v\in P_{\parallel z}$
so that $f(u)+1=f(z)=f(v)-1$.
\item Either \Mk{} and \Ek{} hold, or \Mkd{} and \Ekd{} hold.
\end{enumerate}
\end{thm}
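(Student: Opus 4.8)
The plan is to establish the chain of implications (c)$\Rightarrow$(b)$\Rightarrow$(a) directly and combinatorially, and to obtain (a)$\Rightarrow$(c) via the geometric mechanism. The equivalence $N_{k+1}=N_k=N_{k-1}$ with $N_k^2=N_{k-1}N_{k+1}$ (given $N_k>0$) is of course immediate from Theorem~\ref{thm:maingeom}, so the content is the structural description. First I would set up the standard polytope model: encode linear extensions with $f(y)-f(x)=k$ as lattice points in an order polytope (or more precisely, express $N_k$, $N_{k-1}$, $N_{k+1}$ as mixed volumes of a suitable family of polytopes built from the poset $P$ and the distinguished elements $x,y$, following the Kahn--Saks construction as modified in \cite{KS84} and revisited in \cite{CP22,CPP23}). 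In this model, the Kahn--Saks inequality Theorem~\ref{thm:ks} becomes an instance of Alexandrov--Fenchel, and a flat progression $N_{k-1}=N_k=N_{k+1}$ corresponds to a \emph{double} equality case; the hard Lefschetz theorem for nef classes of \cite{SvH23} (in the polytopal form, cf.\ \cite[\S16.2]{SvH23}) then forces a rigidity statement on the supporting polytopes that I would translate back into poset language.

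The main work is the two combinatorial directions. For (b)$\Rightarrow$(a): if there is $z\in\{x,y\}$ such that in \emph{every} extension $f$ with $f(y)-f(x)=k$ the value $f(z)$ is ``sandwiched'' by values of two elements incomparable to $z$, I would argue by an explicit bijection. Say $z=x$; the condition says $f(x)-1$ and $f(x)+1$ are attained by elements $u,v\parallel x$. Then one can slide $x$ down by one (swapping $f(x)$ with $f(u)=f(x)-1$) to produce an extension counted by $N_{k+1}$, and slide $x$ up by one to produce one counted by $N_{k-1}$; the sandwiching hypothesis is exactly what guarantees these moves are always legal and that the resulting maps $N_k\to N_{k+1}$ and $N_k\to N_{k-1}$ are injective, while a reverse-sliding argument shows surjectivity, giving $N_{k-1}=N_k=N_{k+1}$. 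For (c)$\Rightarrow$(b): I would show that the counting conditions \Mk{} and \Ek{} (the case $z=x$) force, for every valid $f$, the existence of the required $u,v\in P_{\parallel x}$. Roughly, \Ek{} controls how many elements can be forced to lie between $x$ and $y$ in rank (so that the slot just above $x$ cannot be occupied by something $>x$ nor forced to be $y$ itself), and \Mk{} controls the ``sinks and sources'' so the slot just below $x$ cannot be occupied by an element $<x$; a careful pigeonhole on the ranks of $P_{<x}$, $P_{>y}$, and $P_{x<\cdot<y}$, using $N_k>0$ to avoid the vanishing regime of Lemma~\ref{lem:vanish}, pins down $u$ and $v$ as incomparable to $x$.

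The hard part will be (a)$\Rightarrow$(c), i.e.\ proving that a nontrivial flat progression actually \emph{implies} one of the two symmetric bundles of conditions. Here I would invoke the equality characterization of Alexandrov--Fenchel from \cite{SvH23}: equality $N_k^2=N_{k-1}N_{k+1}$ says the relevant polytope $K$ is (after translation and homothety) a ``touching'' configuration relative to the reference bodies, and the \emph{double} equality of a flat progression upgrades this, via the nef hard Lefschetz statement, to a much stronger degeneracy---essentially that a whole face structure of the order polytope is fixed. The obstacle is purely one of translation: the geometric rigidity comes out as statements about which faces of the order polytope are ``supported'' and which coordinates are pinned, and one must decode this into the arithmetic inequalities \Mk--\Ek{} on the cardinalities $|P_{<z}|$, $|P_{>y}|$, $|P_{z<\cdot<y}\cup\{x\}|$, etc. I expect this to require identifying, inside the polytopal equality condition, the dichotomy ``degeneracy happens at $x$ or at $y$'' (which is the source of the two cases in (c)), and then a somewhat delicate combinatorial bookkeeping argument---of the type carried out for Stanley's inequalities in \cite[\S15]{SvH23} and \cite{MS23}, but complicated here by the extra distinguished element, which is precisely what makes the Kahn--Saks extremals richer. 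Once the dictionary is in place, checking that the polytopal conditions match \Mk{} and \Ek{} (resp.\ their starred versions) should be a finite, if lengthy, verification, and the minor discrepancies with \cite[Conjecture~8.7]{CPP23} noted in Remark~\ref{rem:conditions} will emerge from this verification.
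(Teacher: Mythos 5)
Your overall plan matches the paper's approach: (c)$\Rightarrow$(b)$\Rightarrow$(a) is proved combinatorially, and (a)$\Rightarrow$(c) passes through the Kahn--Saks polytope model, the equality characterization of the Alexandrov--Fenchel inequality from \cite{SvH23}, and a careful translation of the geometric rigidity back into the poset conditions \Mk, \Ek, \Mkd, \Ekd. The (c)$\Rightarrow$(b) sketch and the high-level description of (a)$\Rightarrow$(c) are in the right spirit, even if the latter only gestures at the substantial work of classifying $k$-extreme directions and decoding the constraints on the translation vector.

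There is, however, a concrete gap in your proof of (b)$\Rightarrow$(a). You claim the sliding maps $N_k\to N_{k-1}$ and $N_k\to N_{k+1}$ are bijections by arguing that ``a reverse-sliding argument shows surjectivity.'' This does not follow from hypothesis (b): the sandwiching property is only assumed for linear extensions $f$ with $f(y)-f(x)=k$, so if $g$ is an extension with $g(y)-g(x)=k\pm 1$, you have no guarantee that the neighbour of $x$ (or $y$) in $g$ is incomparable to $x$ (or $y$), hence no guarantee that the reverse slide is legal. Surjectivity is in fact equivalent to the conclusion you are trying to prove, so assuming it is circular. The correct closing step, as in the paper, is to use only the injectivity you have established to get $N_k\le N_{k-1}$ and $N_k\le N_{k+1}$, and then invoke the Kahn--Saks inequality $N_k^2\ge N_{k-1}N_{k+1}$: together with $N_k>0$ these force $N_{k-1}=N_k=N_{k+1}$. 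Put differently, one must already use the log-concavity inequality as an input to the equality characterization; injectivity alone would only give unimodality around $k$. Fixing this removes the gap without changing the rest of your outline.
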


In contrast, no plausible conjecture has been put forward on the structure 
of the non-flat equality cases. These are completely settled by the 
following theorem.

\begin{thm}[Doubling progressions]
\label{thm:maindouble}
If $N_k>0$, the following are equivalent:
\begin{enumerate}[leftmargin=*,itemsep=\topsep,label=\alph*.]
\item $N_{k+1}=2N_k=4N_{k-1}$.
\item 
{\binoppenalty=10000 
\relpenalty=10000 
$P=P_{\le x}\cup P_{<y,\parallel x}\cup P_{\ge y}\cup P_{>x,\parallel y}$, 
and for every linear extension $f$ of {$P_{<x}\cup P_{<y,\parallel x}$} 
and every linear extension $f'$ of {$P_{>y}\cup P_{>x,\parallel y}$} the 
following hold: the $k$ largest elements of $f$ are incomparable to $x$, 
the $k$ smallest elements of $f'$ are incomparable to $y$, and for every 
$1\le i\le k-1$ the $i$ largest elements of $f$ are incomparable to the 
$k-i$ smallest elements of $f'$.}\footnote{%
	Here we view a linear extension $f$ of a subset $S\subseteq P$
	as defining a linear ordering of $S$. Thus, e.g., the 
	smallest two elements of $f$ are $f^{-1}(1)$, $f^{-1}(2)$, the 
	largest element of $f$ is $f^{-1}(|S|)$, etc.}
\item 
$P_{\parallel x,\parallel y}=P_{x<\cdot<y}=\varnothing$,
and \Ek{}, \Ekd{} and \Ck{} hold.
\end{enumerate}
\end{thm}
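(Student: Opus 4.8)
The plan is to prove Theorem~\ref{thm:maindouble} by combining the geometric machinery that underlies Theorems~\ref{thm:maingeom} and~\ref{thm:mainflat} with a careful combinatorial bookkeeping argument, exploiting the rigidity that the doubling ratio forces. The starting point is the same polytope construction used for the Kahn-Saks inequality itself: one encodes $N_{k-1},N_k,N_{k+1}$ as mixed volumes of a sequence of polytopes (or, equivalently, as intersection numbers in the Shenfeld--Van Handel hard Lefschetz framework \cite[\S 16.2]{SvH23}), so that equality in Theorem~\ref{thm:ks} becomes equality in an Alexandrov--Fenchel-type inequality. The difference from Stanley's inequalities is that here the relevant polytopes are not all full-dimensional, and the ``supporting'' geometry carries strictly more information; this is precisely the source of the doubling phenomenon. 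I would first invoke the extremal characterization from \cite{SvH23} (or the combinatorial consequences already extracted for the Kahn-Saks setting) to translate $N_k^2=N_{k-1}N_{k+1}$ into a statement about which faces of the associated polytopes coincide, and then run Theorem~\ref{thm:maingeom} to split into the flat and doubling cases, so that from the outset we may assume $N_{k+1}=2N_k=4N_{k-1}$.

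The heart of the proof is the equivalence (a)$\Leftrightarrow$(b). For (a)$\Rightarrow$(b), the idea is that the doubling ratio is extremely restrictive: each linear extension counted by $N_k$ must, under the two natural ``shift'' maps that relate $N_k$-extensions to $N_{k+1}$- and $N_{k-1}$-extensions, behave as rigidly as possible. Concretely, one studies the injection that takes a linear extension with $f(y)-f(x)=k$ and produces one with gap $k+1$ by moving an element across $x$ or $y$; the factor $2$ says that this map is generically exactly two-to-one in a structured way, forcing every element strictly between $x$ and $y$, and every element incomparable to both, to be absent --- this gives $P_{\parallel x,\parallel y}=P_{x<\cdot<y}=\varnothing$ and the decomposition $P=P_{\le x}\cup P_{<y,\parallel x}\cup P_{\ge y}\cup P_{>x,\parallel y}$ of part~(b)/(c). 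Once the poset has this shape, a linear extension of $P$ with gap $k$ restricts to a pair $(f,f')$ of linear extensions of $P_{<x}\cup P_{<y,\parallel x}$ and $P_{>y}\cup P_{>x,\parallel y}$ together with an interleaving of the ``middle'' block of $k-1$ slots strictly between $x$ and $y$; the doubling condition forces that \emph{every} such interleaving that respects the partial order is admissible, which is exactly the incomparability statements in~(b). For (b)$\Rightarrow$(a) one simply counts: when the interleavings are completely free, the number of ways to place the gap-$(k-1)$, gap-$k$, gap-$(k+1)$ configurations forms the progression $1:2:4$ by a direct binomial identity on how the two ends' top/bottom elements may be reshuffled.

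For the equivalence (b)$\Leftrightarrow$(c), the structural equalities $P_{\parallel x,\parallel y}=P_{x<\cdot<y}=\varnothing$ are common to both and need only be matched up. The content is that, given this shape of $P$, the conditions \Ek{}, \Ekd{}, \Ck{} are exactly the ``every admissible interleaving occurs'' requirements rephrased pointwise. Here \Ek{} and \Ekd{} control the two ends: that the top $k$ elements of any extension of the left piece are incomparable to $x$, and dually for the right piece and $y$ --- these are counting inequalities asserting that there is enough room below $y$ (resp.\ above $x$) to push the relevant elements past. The condition \Ck{} encodes the cross-constraint: for $z<z'$ with $z\in P_{<y,\parallel x}$, $z'\in P_{>x,\parallel y}$, the inequality $|P_{z<\cdot<y}|+|P_{x<\cdot<z'}|>k-2$ says precisely that $z$ and $z'$ can never be forced to sit on the same side of the $k-1$ middle slots in a way that would make them comparable-by-transitivity-through-the-gap obstruct the free interleaving. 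The translation in each direction is a matter of choosing, for a would-be violating extension, a witnessing pair $u,v$ or $z,z'$, and conversely building the offending linear extension from a violated inequality.

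The main obstacle I anticipate is the (a)$\Rightarrow$(b) step, specifically pinning down \emph{why} the ratio is exactly $2$ and $4$ rather than some other pattern, and simultaneously extracting the full poset decomposition. Unlike Stanley's inequalities --- where the only extremal ratio is $1$ and the extremal analysis reduces to a single face-coincidence --- here one must rule out all other geometric progressions (this is Theorem~\ref{thm:maingeom}, which I would prove first or in tandem) and then show that the $1:2:4$ case is governed by a very particular degeneration in which the middle ``gap'' polytope collapses to a segment while the two end polytopes retain full dimension. Managing the interaction between the two shift operations (moving elements across $x$ versus across $y$) without double-counting, and correctly identifying that the free-interleaving condition is both necessary and sufficient for the exact factor $2$, is where the delicate combinatorics of \cite{KS84} (as opposed to \cite{Sta81}) genuinely bites; I expect this to require the sharpened equality characterization of \cite{SvH23} rather than the Alexandrov--Fenchel inequality alone.
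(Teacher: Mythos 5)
The sufficiency half of your outline matches the paper: the implications (c)$\Rightarrow$(b)$\Rightarrow$(a) are indeed elementary, and your description of (b)$\Rightarrow$(a) as ``counting free interleavings'' is essentially the paper's bijection $\mathcal{E}_\ell\cong\mathcal{E}_-\times\mathcal{E}_+\times\{0,1\}^{\ell-1}$, which yields $N_\ell=2^{\ell-1}|\mathcal{E}_-||\mathcal{E}_+|$ for $\ell\le k+1$ and hence the $1:2:4$ ratio. Your reading of \Ek{}, \Ekd{}, \Ck{} as encoding end- and cross-constraints on the interleaving is also the right intuition behind (c)$\Rightarrow$(b).

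The genuine gap is in the necessity direction. You propose to prove (a)$\Rightarrow$(b) by arguing that the ratio $N_{k+1}/N_k=2$ forces the shift map ``moving an element past $x$ or $y$'' to be exactly two-to-one, and that this rigidity then eliminates $P_{x<\cdot<y}$ and $P_{\parallel x,\parallel y}$. This is not an argument: combinatorial injections/surjections between extension classes yield inequalities among the $N_\ell$, but there is no elementary mechanism by which an exact numerical ratio forces the global poset decomposition, and no such combinatorial argument appears in the paper. What the paper actually proves is (a)$\Rightarrow$(c), via Proposition \ref{prop:geom}: equality forces the existence of a translation vector $v$ with $h_K=h_{aL+v}$ on all $k$-extreme directions, and the content of the argument is to pin down enough coordinates of $v$ (via Lemmas \ref{lem:eqextr}--\ref{lem:vmid}, using the catalogue of $k$-extreme coordinate, transition, and anchor vectors in Section \ref{sec:face}) to force $v_{xy}=\tfrac14$ when $a=\tfrac12$. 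That single identity simultaneously activates $(\mathscr{E})$ and $(\mathscr{E}^*)$, hence \Ek{} and \Ekd{}; $P_{\parallel x,\parallel y}=\varnothing$ then follows from Lemma \ref{lem:incomparable}, $P_{x<\cdot<y}=\varnothing$ from Lemmas \ref{lem:notbothfail} and \ref{lem:midnono}, and \Ck{} from the transition vector of Lemma \ref{lem:kxtrans}(i). Your sketch never engages with this linear-algebra-on-$v$ step, which is where the theorem is actually won.

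You also misdiagnose the geometric mechanism. You attribute the doubling case to a degeneration in which ``the middle gap polytope collapses to a segment''---that would be the dimensional-collapse mechanism (M3 in the paper's terminology), which is relevant to a generalized Stanley inequality but not here. The paper is explicit (Section \ref{sec:transscale}) that the novelty of the Kahn--Saks doubling extremals is M1, nontrivial translation and scaling: the polytope $\tfrac12 L+v$ has the same support as $K$ in all $k$-extreme directions despite being obtained by a non-lattice scaling and translation. Building the proof around a dimensional collapse would send you down the wrong path; the correct structure is to compute $v$ and show $v_{xy}=\tfrac14$.
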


Both Theorems \ref{thm:mainflat} and \ref{thm:maindouble} provide two 
distinct combinatorial characterizations of the extremals of the Kahn-Saks 
inequality. On the one hand, we provide an explicit characterization in 
terms of the structure of the poset itself. This formulation can be 
readily used to verify the equality condition in any concrete situation, 
as the properties in Definition \ref{defn:conditions} can be read off 
directly from the Hasse diagram of $P$. On the other hand, we provide a 
complementary characterization in terms of the structure of the linear 
extensions of $P$. While less explicit, this formulation explains the 
reason that equality arises in each situation (cf.\ section 
\ref{sec:easy}).

\begin{rem}[On Definition \ref{defn:conditions}]
\label{rem:conditions}
Definition \ref{defn:conditions} is formulated for an arbitrary poset $P$ 
and distinguished elements $x\not\ge y$. In most cases, 
\Ek{} and \Ekd{} can be somewhat simplified. For example, in \Ek{}, the 
second condition $|P_{<y}\cup\{x\}|>k$ follows automatically from the 
first condition as long as $P_{<x}\ne\varnothing$; we included it only to 
account for the case that $P_{<x}=\varnothing$. Similarly, when 
$x<y$, we automatically have $x\in P_{z<\cdot<y}$ for any $z\in P_{<x}$ 
and there is no need to include $\{x\}$ separately in \Ek; the present 
formulation accounts also for the case that $x\parallel y$.
These details are essentially cosmetic in nature: as the numbers $N_k$ are 
unchanged if we add a globally minimal and maximal element to $P$ and 
add the relation $x<y$ to the partial order, there would be no loss of 
generality in assuming that $P_{<x},P_{>y}\ne\varnothing$ and $x<y$ in our 
proofs (see, e.g., the proof of Corollary \ref{cor:mainmain}).

On the other hand, other relations among the properties of 
Definition~\ref{defn:conditions} may not be immediately evident. In 
particular, we will show as part of the proof of our main results that the 
conditions \Mk{} and \Ekd{} are mutually exclusive, and that \Mkd{} and 
\Ek{} are mutually exlusive (Lemma \ref{lem:mutex}).
This shows, for example, that the two 
alternatives of Theorem \ref{thm:mainflat}(c) cannot hold simultaneously.

The first four properties of Definition~\ref{defn:conditions} were 
anticipated (up to minor corrections) by Chan-Pak-Panova \cite{CPP23}. 
There, the combination of \Mk{} and \Ek{} was called the \emph{$k$-midway 
property}, and the combination of \Mkd{} and \Ekd{} was called the 
\emph{dual $k$-midway property}. As these properties turn out to appear in 
a different combination in Theorem \ref{thm:maindouble}, we do not
adopt this terminology here.
\end{rem}

\subsection{Discussion}

Our results provide a detailed picture of when, where, and why
geometric progressions can arise in the log-concave sequences 
$N_1,\ldots,N_{n-1}$. The aim of this section is to further discuss the 
significance of these results.

\subsubsection{Shapes of log-concave sequences}

Despite the considerable interest in log-concave sequences of 
combinatorics \cite{Sta89,Huh18}, log-concavity in itself provides a 
limited qualitative picture on what such sequences look like. The study of 
geometric progressions reveals that there can be significant additional 
structure in the shapes of log-concave sequences that arise in a given 
combinatorial problem, and provides more detailed qualitative and 
quantitative information.

In the setting of this paper, the qualitative picture is illustrated in 
Figure \ref{fig:shape}. Recall that any log-concave sequence is unimodal 
on its support. Moreover, by Lemma \ref{lem:vanish}, the sequences in this 
paper can vanish only on an initial or final segment. We therefore claim 
that any extremal sequence (that is, one containing a geometric 
progression) must look qualitatively like one of the plots in Figure 
\ref{fig:shape} (however, some segments $I_j$ may be empty 
in a given situation):
\begin{enumerate}[leftmargin=*,itemsep=\topsep,label=\textbullet]
\item The top plot illustrates the situation with no
doubling progression. By unimodality, there can be at most one
flat segment that is the maximum of the sequence.
\item The bottom plot 
illustrates the situation where there is a doubling progression. 
Note that the conditions \Ek, \Ekd, \Ck\ automatically imply 
$(\mathrm{E}_l)$, $(\mathrm{E}_l^*)$, $(\mathrm{C}_l)$ for all $l<k$.
Thus Theorem \ref{thm:maindouble} and Lemma \ref{lem:vanish} imply that 
a doubling progression can only arise as the initial segment of the sequence.
\end{enumerate}
Beyond this qualitative picture, however, our results provide detailed 
quantitative information: they enable us to compute precisely where in the 
sequence the geometric progressions appear (that is, we can compute the
length of each segment $I_j$ explicitly in terms of the poset 
structure). It appears rather surprising that such detailed information is 
accessible for the linear extension numbers of arbitrary posets, which are 
themselves hard to compute \cite{BW91}.
\begin{figure}
\centering
\captionsetup{width=.925\linewidth}

\begin{tikzpicture}[scale=.25]
\newcommand\Square[1]{+(-#1,-#1) rectangle +(#1,#1)}

\foreach \i in {9,...,15}
{
        \draw[color=blue!40!white] (\i,{-.05+4*exp(-(\i-15)^2/16)}) -- 
(\i,0);
        \draw[color=blue!40!white,fill=white,thick] 
(\i,{-.05+4*exp(-(\i-15)^2/16)}) circle (0.25);
}

\foreach \i in {16,...,24}
{
        \draw[color=blue,fill=blue,thick] (\i,4) circle (0.25);
        \draw[color=blue] (\i,4) -- (\i,0);
}

\foreach \i in {25,...,35}
{
        \draw[color=blue!40!white] (\i,{-.05+4*exp(-(25-\i)^2/36)}) -- 
(\i,0);
        \draw[color=blue!40!white,fill=white,thick] 
(\i,{-.05+4*exp(-(25-\i)^2/36)}) circle (0.25);
}

\draw[thick,->] (0,0) -- (0,5) node[above] {$N_i$};
\draw[thick,->] (0,0) -- (42,0) node[right] {$i$};

\draw[ultra thick] (.75,-1) -- (8.25,-1) node[midway,below] 
{$\scriptstyle I_1$};
\draw[ultra thick] (8.75,-1) -- (15.25,-1) node[midway,below] 
{$\scriptstyle I_2$};
\draw[ultra thick] (15.75,-1) -- (24.25,-1) node[midway,below] 
{$\scriptstyle I_3$};
\draw[ultra thick] (24.75,-1) -- (35.25,-1) node[midway,below] 
{$\scriptstyle I_4$};
\draw[ultra thick] (35.75,-1) -- (40.25,-1) node[midway,below] 
{$\scriptstyle I_5$};

\foreach \i in {1,...,8}
{
        \draw[color=blue,fill=blue,thick] (\i,0) node 
{$\scriptstyle\boldsymbol{\times}$};
}

\foreach \i in {36,...,40}
{
        \draw[color=blue,fill=blue,thick] (\i,0) node
{$\scriptstyle\boldsymbol{\times}$};
}

\begin{scope}[yshift=-10cm]

\foreach \i in {1,...,5}
{
        \draw[color=blue,fill=blue,thick] (\i,{(2^\i)/10}) \Square{0.25};
	\draw[color=blue] (\i,{(2^\i)/10}) -- (\i,0);
}

\foreach \i in {6,...,9}
{
        \draw[color=blue!40!white] (\i,{2.95+2*exp(-(\i-9)^2/16)}) -- 
(\i,0);
        \draw[color=blue!40!white,fill=white,thick] 
(\i,{2.95+2*exp(-(\i-9)^2/16)}) circle (0.25);
}

\foreach \i in {10,...,18}
{
        \draw[color=blue,fill=blue,thick] (\i,5) circle (0.25);
        \draw[color=blue] (\i,5) -- (\i,0);
}

\foreach \i in {19,...,30}
{
        \draw[color=blue!40!white] (\i,{-.05+5*exp(-(19-\i)^2/36)}) -- 
(\i,0);
        \draw[color=blue!40!white,fill=white,thick] 
(\i,{-.05+5*exp(-(19-\i)^2/36)}) circle (0.25);
}

\draw[thick,->] (0,0) -- (0,5) node[above] {$N_i$};
\draw[thick,->] (0,0) -- (42,0) node[right] {$i$};

\draw[ultra thick] (.75,-1) -- (5.25,-1) node[midway,below] 
{$\scriptstyle I_1$};
\draw[ultra thick] (5.75,-1) -- (9.25,-1) node[midway,below] 
{$\scriptstyle I_2$};
\draw[ultra thick] (9.75,-1) -- (18.25,-1) node[midway,below] 
{$\scriptstyle I_3$};
\draw[ultra thick] (18.75,-1) -- (30.25,-1) node[midway,below] 
{$\scriptstyle I_4$};
\draw[ultra thick] (30.75,-1) -- (40.25,-1) node[midway,below] 
{$\scriptstyle I_5$};

\foreach \i in {31,...,40}
{
        \draw[color=blue,fill=blue,thick] (\i,0) node
{$\scriptstyle\boldsymbol{\times}$};
}

\end{scope}

\end{tikzpicture}
\vspace{-.2cm}

\caption{Structure of extremal sequences.
The symbols denote vanishing 
(\raisebox{1pt}{$\scriptstyle\boldsymbol{\times}$}),
flat progressions ($\bullet$), doubling progressions 
(\raisebox{0.5pt}{$\scriptscriptstyle\blacksquare$}),
and strictly log-concave regions ($\circ$).
\label{fig:shape}}
\end{figure}

It is not immediately obvious from the formulation of our results 
that all regions of the log-concave sequences that are illustrated in 
Figure \ref{fig:shape} can in fact appear simultaneously. This is however 
readily verified by means of simple examples.

\begin{example}
Suppose that $x$ is a globally minimal element of $P$, that is, $x\le z$ 
for all $z\in P$. Then $x$ must appear first in any linear extension of 
$P$. Therefore, as noted in \cite{CPP23}, this special case reduces to the 
situation originally studied by Stanley: $N_k$ is the number of linear 
extensions of $P\backslash\{x\}$ in which $y$ has rank $k$. In this 
setting, the explicit construction of \cite[Example 15.4]{SvH23} shows 
that we may engineer the poset $P$ to achieve a log-concave sequence as in 
the top plot of Figure \ref{fig:shape} with an essentially arbitrary 
choice of (positive) lengths of the segments $I_1,\ldots,I_5$.
\end{example}

\begin{example}
\label{ex:doublefull}
We now provide a counterpart of the previous example for the bottom plot 
of Figure \ref{fig:shape}. Consider the poset $P$ with $|P|=r+s+2$
defined by the relations
$$
	x\lessdot z_1 \lessdot \cdots \lessdot z_r,\qquad
	w_1\lessdot \cdots \lessdot w_s,\qquad
	w_u \lessdot z_v,\qquad
	w_u \lessdot y\lessdot z_t
$$
for any $v<t\le r$ and $u\le s$.
Then $P_{x<\cdot<y}=P_{\parallel x,\parallel y}=\varnothing$,
and $N_k>0$ for $k\le t+s$ by Lemma \ref{lem:vanish}.
A straightforward computation shows that
\Mk\ holds when $k>t+s$,
\Mkd\ holds when $k>u+v$, 
\Ek\ holds when $k\le u$,
\Ekd\ holds when $k<t$, and
\Ck\ holds when $k\le v$. Thus Theorems 
\ref{thm:mainflat} and \ref{thm:maindouble}
yield 
\begin{alignat*}{3}
	&N_{k+1}=2N_k=4N_{k-1} && \quad\text{if and only if}\quad 2\le 
k\le 
\min(u,v),\\
	&N_{k+1}=N_k=N_{k-1} && \quad\text{if and only if}\quad u+v<k<t,\\
	&N_k=0 && \quad\text{if and only if}\quad t+s<k\le |P|.
\end{alignat*}
In particular, we obtain a log-concave sequence as in the bottom plot 
of Figure~\ref{fig:shape}, and the lengths of the segments 
$I_1,\ldots,I_5$ can be chosen essentially arbitrarily by selecting the 
parameters $r,s,t,u,v$ appropriately.
\end{example}

The poset in Example \ref{ex:doublefull} was constructed in order to 
illustrate the bottom plot of Figure \ref{fig:shape} in full generality. A 
much simpler example where a doubling progression arises is discussed in 
section \ref{sec:weird} (see also \cite[\S 1.4]{CPP23}).

Additional qualitative features of the log-concave sequences of Kahn and 
Saks can be deduced from our main results. As an example, we include the 
following curious observation that was suggested to us by a referee.

\begin{example}
Suppose that $x\parallel y$. Then, by a slight abuse of notation, we can 
define $N_{-1},\ldots,N_{-(n-1)}$ where $N_{-k}$ denotes 
the number of linear extensions $f$ of $P$ so that $f(x)-f(y)=k$.
This sequence is again log-concave by Theorem \ref{thm:ks}, as it 
corresponds to the sequence $N_1,\ldots,N_{n-1}$ associated to the dual 
poset $P^*$ defined by reversing the partial order of $P$.
(We emphasize that negative indices, which appear in \cite{KS84}, are not 
used elsewhere in this paper.)

It is certainly possible that either the sequence $N_1,\ldots,N_{n-1}$ or 
$N_{-1},\ldots,N_{-(n-1)}$ contains a doubling progression. However, it is 
not possible that both these sequences contain a doubling progression 
simultaneously for the same poset $P$.

Indeed, suppose that both sequences contain a doubling progression. Then 
Theorem~\ref{thm:maindouble} implies not only that \Ek{} must hold for 
$k=2$, but that this condition must also hold with the roles of $x$ and 
$y$ reversed (the latter is the condition \Ekd{} associated to the dual 
poset $P^*$). As $P_{<y}\ne\varnothing$ by \Ek{}, we may choose $z\lessdot 
y$. Then \Ek{} with $x$ and $y$ reversed implies that $z<x$. Thus \Ek{} 
implies that $P_{z<\cdot<y}\ne\varnothing$. This entails a contradiction, 
as we assumed $z\lessdot y$.
\end{example}

\subsubsection{General mechanisms}

One of the main reasons for the interest of the log-concavity
phenomenon in combinatorics is that it suggests the presence of a certain 
universal 
structure\footnote{%
        In fact, the Kahn-Saks inequality is somewhat unusual in that
        it was developed in \cite{KS84} as a tool to investigate
        sorting problems, rather than being motivated by
        log-concavity in itself.}
that appears to arise in many combinatorial problems: combinatorial 
analogues of Alexandrov-Fenchel inequalities \cite{Huh20,CP21,CP22} or of 
more general Hodge-Riemann relations \cite{AHK17,Huh18}. It is remarkable 
that the same kinds of structures are fundamental to several other areas 
of mathematics, including convex geometry \cite{Sch14}, algebraic geometry 
\cite{Tei82,Ful93}, and complex geometry \cite{Gro90}.

In contrast to log-concavity, which is a robust qualitative property, one 
might expect that the understanding of geometric progressions in 
log-concave sequences must be developed on a case-by-case basis. We 
believe, however, that geometric progressions likely arise in many 
problems through a common mechanism that is nearly as universal as the 
structure that gives rise to log-concavity itself. As is explained in 
\cite[\S 16.2]{SvH23}, the equality cases of the Alexandrov-Fenchel 
inequality that form the basis for the analysis in this paper admit a 
natural algebraic interpretation (a precise counterpart of the hard 
Lefschetz theorem of degree $1$ for nef classes). In this form, such 
structures can be meaningfully formulated in much more general situations, 
and it is natural to conjecture that they do indeed arise in many 
combinatorial problems. Such questions are largely open to date, though 
there is recent progress on analogous questions in algebraic geometry 
\cite{HX23}.

The results of this paper and those of \cite{SvH23,MS23} motivate the 
investigation of such structures, and illustrate the kind of strong 
qualitative and quantitative information that can be obtained when they 
are present.

\subsubsection{Interplay between geometry and combinatorics}

The proof of the Kahn-Saks inequality, like that of Stanley's inequality 
on which it is based, translates the original combinatorial problem to a 
geometric problem for convex polytopes. As will be explained in section 
\ref{sec:af}, Kahn and Saks construct two $(n-1)$-dimensional convex 
polytopes $K,L$ so that the linear extension numbers can be represented as 
a mixed volume $N_k = (n-1)!\,\V(K[n-k],L[k-1])$. The inequality 
$N_k^2\ge N_{k-1}N_{k+1}$ then 
follows immediately from the Alexandrov-Fenchel inequality
$$
	\V(K,L,C_1,\ldots,C_{n-3})^2 \ge
	\V(K,K,C_1,\ldots,C_{n-3})\,
	\V(L,L,C_1,\ldots,C_{n-3}),
$$
which is a far-reaching generalization of the isoperimetric inequality 
in convex geometry. (We refer to \cite{Sch14} for background on convex 
geometry; the relevant notions for this paper will be briefly reviewed in 
section \ref{sec:af}.) One might view the Kahn-Saks inequality 
as an isoperimetric inequality for posets, and its equality 
characterization as the corresponding isoperimetric theorem.

The equality cases of the Alexandrov-Fenchel inequality were completely 
characterized for convex polytopes in \cite{SvH23}. However, this 
characterization provides geometric information on the polytopes in 
question. The main difficulty in the proof of our main results is to 
understand how to translate this geometric information back to the 
original combinatorial problem. This translation is far from 
straightforward, and requires us to develop a detailed understanding 
of the interplay between the geometric and combinatorial descriptions.

An unexpected consequence of the results of this paper is that they 
complete the picture of what geometric extremals can arise in 
combinatorial problems. Let us recall from \cite[\S 2.2]{SvH23} that the 
equality cases of the Alexandrov-Fenchel inequality arise from a 
superposition of three distinct mechanisms: 
\begin{enumerate}[leftmargin=*,itemsep=\topsep,label=\textbf{M\arabic*}.] 
\item 
Translation and scaling. \item Degeneration of the support of mixed area 
measures. 
\item ``Critical'' equality cases caused by dimensional collapse. 
\end{enumerate}
As there is enormous freedom in the choice of arbitrary convex polytopes, 
it may not be too surprising that the Alexandrov-Fenchel inequality has 
numerous equality cases. What is far more surprising is that \emph{all} 
the equality mechanisms turn out to arise even in natural combinatorial 
applications. While only \textbf{M2} plays a role in the setting of 
Stanley's inequality \cite[\S 15]{SvH23}, \textbf{M3} can arise in a 
general form of Stanley's inequality \cite{MS23} (the latter has striking 
complexity implications \cite{CP24}). It will turn out that \textbf{M1} 
plays a central role in this paper, completing the picture.

As the role of \textbf{M1} is a novel feature of the problem investigated 
in this paper that did not arise in previous works, we briefly discuss it 
further.

\subsubsection{The role of translation and scaling}
\label{sec:transscale}

Geometrically, translation and scaling may be viewed as trivial reasons 
for equality. For example, equality clearly holds in the 
Alexandrov-Fenchel inequality when $K=L$; therefore, as mixed volumes are 
homogeneous and translation invariant, we trivially obtain new equality 
cases $K=aL+v$ by translation and scaling. Nontrivial equality cases arise 
due to \textbf{M2} above, which states that these polytopes need not be 
equal but need only have the same supporting hyperplanes in some 
directions (cf.\ section \ref{sec:eqaf}).

From a combinatorial perspective, however, the appearance of nontrivial 
translation and scaling is unexpected. The connection between convexity 
and combinatorics arises when we work with lattice polytopes, in which 
case mixed volumes are always combinatorial quantities \cite[\S 
5.4]{Ful93}. In particular, the polytopes used by Stanley, Kahn and Saks 
have all their vertices in the set $\{0,1\}^n$, cf.\ \cite{Sta86}. One may 
expect that this property provides sufficient rigidity that two such 
polytopes cannot have the same supporting hyperplanes if we scale or 
translate one of them. It is a surprising feature of the Kahn-Saks 
inequality that nontrivial scaling and translation can nonetheless arise. 
In particular, the resulting equality cases of the Alexandrov-Fenchel 
inequality do not respect the lattice structure of the underlying 
polytopes, as will be illustrated in a simple example in section 
\ref{sec:weird}.

The consequences of this breakdown of rigidity are fundamental to our main 
results. Nontrivial scaling is responsible for the presence of the 
non-flat geometric progressions (cf.\ section \ref{sec:eqaf}). At the same 
time, nontrivial translations arise in the present setting even when we 
consider flat geometric progressions: the translation determines which of 
the two cases of Theorem \ref{thm:mainflat}(c) is in force, as will become 
clear in the proof (cf.\ Definition \ref{defn:condpf} and the following 
discussion).

\subsection{Organization of this paper}
\label{sec:org}

The rest of this paper is organized as follows.

The implications (c)$\Rightarrow$(b)$\Rightarrow$(a) of Theorems 
\ref{thm:mainflat} and \ref{thm:maindouble} require only elementary 
arguments. We first dispense with these implications in section 
\ref{sec:easy}. The remainder of the paper is devoted to the core part of 
our main results---Theorem \ref{thm:maingeom} and the implication 
(a)$\Rightarrow$(c) of Theorems \ref{thm:mainflat} and 
\ref{thm:maindouble}.

In section \ref{sec:af}, we describe the convex geometric construction 
that underpins the Kahn-Saks inequality. This yields, through the equality 
cases of the Alexandrov-Fenchel inequality, a geometric description of its 
extremals. The main difficulty in the proof is now to understand how one 
can use this geometric information to characterize the combinatorial 
structure of the poset.

In section \ref{sec:face}, we aim to translate the basic geometric data 
that was obtained in section \ref{sec:af} into combinatorial conditions. 
This requires us to study the facial structure of the polytopes that 
appear in the geometric equality characterization. The faces will turn out 
to be described by combinatorial constraints. As a byproduct, we 
also obtain a short proof of Lemma \ref{lem:vanish} here.

Now that the geometric data has been converted to combinatorial 
information, we arrive at the heart of the argument: we must use this 
combinatorial information to fully characterize the structure of the 
poset. This is the main part of the proof of our main results, which is 
contained in section \ref{sec:proof}.

\section{Sufficiency}
\label{sec:easy}

The main results of this paper provide necessary and sufficient conditions 
for equality to hold in the Kahn-Saks inequality. One direction of these 
results is considerably simpler than the other, however: the proof that 
the combinatorial conditions of Theorems \ref{thm:mainflat} and 
\ref{thm:maindouble} are sufficient for equality to hold is entirely 
elementary. Nonetheless, this direction sheds considerable light on the 
structure of the problem, as its proof reveals the combinatorial 
mechanisms that give rise to equality. This direction of our main results 
will be proved in the present section. The main challenge ahead of us is 
to show that these are the \emph{only} mechanisms that can give rise to 
equality, that is, that the sufficient conditions are also necessary. The 
proof of the latter will occupy the remainder of this paper.

We begin by proving the implications (c)$\Rightarrow$(b)$\Rightarrow$(a) 
of Theorem \ref{thm:mainflat}. The argument is essentially contained in 
\cite[\S 8.3]{CPP23}, and is very similar to the argument in the proof of 
\cite[Theorem 15.3]{SvH23}. We include the proof for completeness.

\begin{proof}[Proof of Theorem \ref{thm:mainflat}: 
\emph{(c)$\Rightarrow$(b)}]
Suppose that \Mk{} and \Ek{} hold. Let $f$ be any linear extension of $P$ 
such that $f(y)-f(x)=k$. Since $|P_{<y}\cup\{x\}|>k$ by \Ek{}, we 
must have $f(y)\ge k+2$ and thus $f(x)=f(y)-k\ge 2$.

Now consider $u\in P$ such that $f(u)=f(x)-1$, which exists as $f(x)\ge 2$.
If it were the case that $u<x$, then \Ek{} would imply that
$$
	k<|P_{u<\cdot<y}\cup\{x\}|\le
	f(y)-f(u)-1=k,
$$
which is impossible. But $u>x$ is also impossible as $f(u)<f(x)$. Thus 
$u\parallel x$. 

Similarly, consider $v\in P$ such that $f(v)=f(x)+1$. 
Note that $v\not\ge y$ because $f(v)<f(x)+2\le f(y)$ (as $k\ge 2$).
If $v>x$, then \Mk{} would imply that
$$
	n-k < |P_{<v}|+|P_{>y}| \le f(v)-1 + n-f(y) = n-k,
$$
which entails a contradiction. We conclude as above that $v\parallel x$.

We have therefore shown that the validity of \Mk{} and \Ek{} implies
that condition (b) of Theorem \ref{thm:mainflat} holds with $z=x$. We omit 
the completely analogous argument that \Mkd{} and \Ekd{} imply condition (b) 
of Theorem \ref{thm:mainflat} with $z=y$.
\end{proof}

\begin{proof}[Proof of Theorem \ref{thm:mainflat}: 
\emph{(b)$\Rightarrow$(a)}]
Suppose that condition (b) of Theorem \ref{thm:mainflat} holds with
$z=x$. 
Let $f$ be any linear extension of $P$
such that $f(y)-f(x)=k$ and $f(u)+1=f(x)=f(v)-1$. As $u,v\parallel x$,
swapping $v$ and $x$ in the linear order defined by $f$ yields a new 
linear extension $f'$ of $P$ with $f'(y)-f'(x)=k-1$, while swapping $u$ 
and $x$ yields a linear extension $f''$ of $P$ with 
$f''(y)-f''(x)=k+1$.

As the maps $f\mapsto f'$ and $f\mapsto f''$ are clearly injective, it 
follows that $N_k\le N_{k-1}$ and $N_k\le N_{k+1}$.
Thus the Kahn-Saks inequality yields
\begin{align*}
	&N_{k-1} N_k
	\ge
	N_k^2 \ge N_{k-1}N_{k+1} \ge N_{k-1} N_k,\\
	&N_k N_{k+1}
	\ge
	N_k^2 \ge N_{k-1}N_{k+1} \ge N_k N_{k+1}.
\end{align*}
Since we assume that $N_k>0$, it follows that
$N_{k-1}=N_k=N_{k+1}$.

We have therefore shown that condition (b) of Theorem \ref{thm:mainflat} 
with $z=x$ implies condition (a). We omit the completely analogous 
argument in the case $z=y$.
\end{proof}

We now turn to the proof of the implications 
(c)$\Rightarrow$(b)$\Rightarrow$(a) of Theorem \ref{thm:maindouble}. The 
mechanism by which equality arises here is very different than in Theorem 
\ref{thm:mainflat}: to prove it, we will establish a bijection between 
certain sets of linear extensions.

\begin{proof}[Proof of Theorem \ref{thm:maindouble}: 
\emph{(c)$\Rightarrow$(b)}]
That $P=P_{\le x}\cup P_{<y,\parallel x}\cup P_{\ge y}\cup P_{>x,\parallel y}$
is merely a reformulation of the conditions
$P_{\parallel x,\parallel y}=P_{x<\cdot<y}=\varnothing$. In the rest 
of the proof, we fix any linear extension $f$ of $P_{<x}\cup 
P_{<y,\parallel x}$ and $f'$ of $P_{>y}\cup P_{>x,\parallel y}$.

Let $z\in P_{>y}$. As $P_{x<\cdot<y}=\varnothing$, we have
$P_{x<\cdot<z}\backslash\{y\}\subseteq P_{>y}\cup P_{>x,\parallel y}$.
Thus
$$
	k < |P_{x<\cdot<z}\cup\{y\}| =
	|P_{x<\cdot<z}\backslash\{y\}|+1
	\le f'(z)
$$
by \Ekd, that is, we have shown that every $z\in P_{>y}$ has
rank at least $k+1$ with respect to $f'$. On the other hand, if
$P_{>y}=\varnothing$ then $|P_{>x,\parallel 
y}|=|P_{>x}\backslash\{y\}|\ge k$ by \Ekd. In either case,
we conclude that the 
$k$ smallest elements of $f'$ must lie in $P_{>x,\parallel y}$.
The completely analogous argument using \Ek{} instead of \Ekd{} shows that 
the $k$ largest elements of $f$ must lie in $P_{<y,\parallel x}$.

Now fix $1\le i\le k-1$, let $z\in P_{<y,\parallel x}$ be among the $i$ 
largest elements of $f$, and let $z'\in P_{>x,\parallel y}$ be among
the $k-i$ smallest elements of $f'$. Note that we must have
$P_{z<\cdot<y}\subseteq P_{<y,\parallel x}$ and
$P_{x<\cdot<z'}\subseteq P_{>x,\parallel y}$ as
$P_{x<\cdot<y}=\varnothing$, $z\parallel x$, and
$z'\parallel y$, so that
$$
	|P_{z<\cdot<y}|<i,\qquad\quad
	|P_{x<\cdot<z'}|<k-i.
$$
Thus \Ck{} implies that $z\not<z'$. On the other hand, $z\not\ge z'$ as 
$P_{x<\cdot<y}=\varnothing$, so we must have $z\parallel z'$. We have 
therefore shown that the $i$ largest elements of $f$ are incomparable to 
the $k-i$ smallest elements of $f'$, concluding the proof.
\end{proof}

\begin{proof}[Proof of Theorem \ref{thm:maindouble}: 
\emph{(b)$\Rightarrow$(a)}]
Let $P_- := P_{<x}\cup P_{<y,\parallel x}$ and
$P_+ := P_{>y}\cup P_{>x,\parallel y}$.
Note that $P_-,P_+,\{x,y\}$ are disjoint as
$x\not\ge y$, and that $P=P_-\cup P_+\cup \{x,y\}$.

Let $1\le \ell\le k+1$. Denote by $\mathcal{E}_-,\mathcal{E}_+$ the sets 
of all linear extensions of $P_-,P_+$, and by $\mathcal{E}_\ell$ the set 
of all linear extensions $\bar f$ of $P$ such that $\bar f(y)-\bar 
f(x)=\ell$. We aim to construct a bijection $\iota:\mathcal{E}_\ell\to 
\mathcal{E}_-\times\mathcal{E}_+\times \{0,1\}^{\ell-1}$.

The construction is as follows. Given any $\bar f\in\mathcal{E}_\ell$, let 
$f\in\mathcal{E}_-$ and $f'\in\mathcal{E}_+$ be defined by restricting the 
linear order of $\bar f$ to $P_-$ and $P_+$, respectively. Moreover, 
define $\omega_i=0$ if the $i$th smallest element of $\bar f$ between $x$ 
and $y$ is in $P_-$, and $\omega_i=1$ if the $i$th smallest element of 
$\bar f$ between $x$ and $y$ is in $P_+$. This defines a map $\iota:\bar 
f\mapsto (f,f',\omega)$. We must show this map is injective and 
surjective.

To show $\iota$ is injective, note that by the definitions of 
$P_-,P_+,\mathcal{E}_\ell$, every element of $P_-$ must be smaller than 
$y$ and every element of $P_+$ must be larger than $x$ in the linear 
ordering defined by $\bar f$. Thus $\bar f$ can be uniquely reconstructed 
from $f,f',\omega$ by choosing the elements between $x$ and $y$ in order 
from the largest elements of $f$ and the smallest elements of $f'$ as 
defined by $\omega$, and placing the remaining elements of $f$ and $f'$ 
below $x$ and above $y$, respectively. This proves injectivity of $\iota$.

Note that the above reconstruction procedure enables us to define a linear 
ordering $\bar f$ of $P$ with $\bar f(y)-\bar f(x)=\ell$ starting from 
\emph{any} $(f,f',\omega)\in \mathcal{E}_-\times\mathcal{E}_+\times 
\{0,1\}^{\ell-1}$. To prove $\iota$ is surjective, we must show that any 
linear ordering $\bar f$ thus defined is a linear extension of $P$, that 
is, that it is compatible with the partial order of $P$. In other words, 
we must show that $\bar f(u)<\bar f(v)$ implies $u\not\ge v$.
\begin{enumerate}[leftmargin=*,itemsep=\topsep,label=\textbullet]
\item For $u,v\in P_-\cup\{y\}$ or $u,v\in P_+\cup\{x\}$, this follows
by construction as $f,f'$ are linear extensions of $P_-\subseteq 
P_{\not\ge y}$ and $P_+\subseteq P_{\not\le x}$, respectively.
\item For $u\in P_-\cup\{y\}$ and $v\in P_+\cup\{x\}$, we have
$u\not\ge v$ by the definitions of $P_-,P_+$ and as $x\not\ge y$, 
$P_{x<\cdot<y}=\varnothing$ (except $u=y,v=x$ for which
$\bar f(u)\not<\bar f(v)$).
\item 
For $u\in P_+\cup\{x\}$ and $v\in P_-\cup\{y\}$, the construction of $\bar f$
ensures that if $u\in P_+$ it must be among the $j$ smallest elements 
of $f'$, and if $v\in P_-$ it must be among the $\ell-1-j$ largest 
elements of $f$, where $j=\sum_{i=1}^{\ell-1}\omega_i$. Thus condition (b) of 
Theorem \ref{thm:maindouble} yields $u\parallel v$, unless $u=x,v=y$ in which case 
$u\not\ge v$ by assumption.
\end{enumerate}
Thus $\bar f$ is a linear extension of $P$, proving surjectivity of $\iota$.

As we have now shown that $\iota$ is a bijection, it follows that
$$
	N_\ell = |\mathcal{E}_\ell| = 2^{\ell-1}{|\mathcal{E}_-|}{|\mathcal{E}_+|}
$$
for every $1\le \ell\le k+1$. In particular,
$N_{k+1}=2N_k=4N_{k-1}$.
\end{proof}

\section{Geometric description of the extremals}
\label{sec:af}

The aim of this section is to recall the geometric construction that 
gives rise to the Kahn-Saks inequality \cite{KS84}, and to deduce a 
geometric characterization of its equality cases from the extremals of the 
Alexandrov-Fenchel inequality \cite{SvH23}. This geometric description of 
the equality cases forms the basis for the remainder of the paper: the 
main challenge in the following sections will be to understand how to use 
this geometric information to extract combinatorial structure.

Before we proceed, let us recall some basic notions of convex geometry 
that will be used without comment in the following. Given two convex 
bodies $C,C'$,
$$
	C+C' := \{x+y:x\in C,y\in C'\}
$$
denotes their Minkowski sum. The support function $h_C$ of $C$ is 
defined by
$$
	h_C(u) := \sup_{x\in C} \langle u,x\rangle.
$$
If $\|u\|=1$, then $h_C(u)$ may be interpreted as the signed distance from 
the origin to the supporting hyperplane of $C$ with outer normal $u$. The 
set
$$
	F(C,u) := \{x\in C: \langle u,x\rangle = h_C(u)\}
$$
is the (exposed) face of $C$ with outer normal $u$. Finally, we recall 
that
$h_{C+C'}(u)=h_C(u)+h_{C'}(u)$ and $F(C+C',u)=F(C,u)+F(C',u)$ 
\cite[Theorem 1.7.5]{Sch14}.

\subsection{The geometric construction}

The following setting will be used throughout the 
rest of this paper. Let $\mathbb{R}^P$ be the $n$-dimensional real vector 
space that is spanned by the coordinate basis $\{e_z\}_{z\in P}$. We 
always equip $\mathbb{R}^P$ with the standard inner product that makes the 
coordinate basis orthonormal. For any $t\in\mathbb{R}^P$, we denote 
its coordinates as $t_z := \langle e_z,t\rangle$. Finally, we define
$$
	V := \{e_y-e_x\}^\perp =
	\{t\in\mathbb{R}^P:t_y-t_x=0\},
$$
where $x\not\ge y$ are the distinguished elements of $P$.

A fundamental role in the following will be played by the order polytope 
of $P$, and in particular by two special slices of the order polytope.

\begin{defn}
The order polytope of $P$ is defined as
$$
	O_P := \{t\in[0,1]^P:t_z\le t_{z'}\text{ whenever }z<z'\}.
$$
Moreover, the polytopes
\begin{align*}
	K & :=
	\{ t\in O_P : t_y-t_x=0 \}, \\
	L & := 
	\{ t\in O_P : t_y-t_x=1\}
\end{align*}
will be fixed throughout the paper.
\end{defn}

As $K\subset V$ and $L\subset V+e_y$, every Minkowski combination 
$(1-\lambda)K+\lambda L$ lies in a translate of the $(n-1)$-dimensional 
space $V$. By a classical fact of Minkowski \cite[\S 5.1]{Sch14},
the $(n-1)$-dimensional volume then satisfies 
$$
	\mathrm{V}_{n-1}((1-\lambda)K+\lambda L) =
	\sum_{k=1}^n \binom{n-1}{k-1}
	(1-\lambda)^{n-k}\lambda^{k-1}\,
	\V(K[n-k],L[k-1]),
$$
where $\V(K[n-k],L[k-1])$ denotes the $(n-1)$-dimensional \emph{mixed 
volume} in which $K$ appears $n-k$ times and $L$ appears 
$k-1$ times. These mixed volumes turn out to compute the linear 
extension numbers $N_k$ \cite[eq.\ (2.14)]{KS84}.

\begin{lem}[Kahn-Saks]
\label{lem:mvol}
$N_k = (n-1)!\,\V(K[n-k],L[k-1])$.
\end{lem}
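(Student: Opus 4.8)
The plan is to establish Lemma~\ref{lem:mvol} by interpreting the mixed volume $\V(K[n-k],L[k-1])$ as a limit of volumes of Minkowski combinations and then counting lattice points, or — what is cleaner here — by directly decomposing the polytope $(1-\lambda)K + \lambda L$ into simplices indexed by linear extensions. Concretely, first I would fix the affine chart: both $K$ and $L$ are slices of the order polytope $O_P$ by parallel hyperplanes $\{t_y - t_x = 0\}$ and $\{t_y - t_x = 1\}$, so $M_\lambda := (1-\lambda)K + \lambda L$ lives in the translate $\{t_y - t_x = \lambda\}$ of the $(n-1)$-dimensional space $V$. The point of the proof is Minkowski's volume polynomial identity already quoted in the excerpt: $\mathrm{V}_{n-1}(M_\lambda) = \sum_{k=1}^n \binom{n-1}{k-1}(1-\lambda)^{n-k}\lambda^{k-1}\,\V(K[n-k],L[k-1])$. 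So it suffices to compute $\mathrm{V}_{n-1}(M_\lambda)$ as an explicit polynomial in $\lambda$ and read off the coefficient of $\binom{n-1}{k-1}(1-\lambda)^{n-k}\lambda^{k-1}$.

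**Computing the volume of the slice.** The key step is to describe $M_\lambda$ explicitly. Because $O_P$ is an order polytope, the map $t \mapsto (t_z)_{z\in P}$ is, on the open region where all coordinates are distinct, governed by the linear order they induce, and that linear order must be a linear extension of $P$. Slicing by $t_y - t_x = \lambda$ with $0 < \lambda < 1$ forces $x$ and $y$ to be non-adjacent in that order in a controlled way. I would argue that $M_\lambda$ is exactly the set $\{t \in [0,1]^P : t_z \le t_{z'} \text{ whenever } z < z',\ t_y - t_x = \lambda\}$ — the inclusion ``$\subseteq$'' is immediate since $K,L$ satisfy the order inequalities and these are preserved under convex combination together with the affine constraint; the reverse inclusion ``$\supseteq$'' requires writing a point of the right-hand side as $(1-\lambda)a + \lambda b$ with $a \in K$, $b \in L$, which one does coordinatewise by a short interpolation argument respecting the chains. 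Granting this, triangulate $M_\lambda$ according to the linear order induced by the coordinates: for each linear extension $f$ of $P$ with $f(y) - f(x) = k$, the corresponding chamber is an $(n-1)$-dimensional simplex (one dimension is killed by the affine constraint $t_y - t_x = \lambda$), and an explicit volume computation — the relevant integral is $\int$ over an order simplex with one gap of size $\lambda$ between two coordinates — gives each such simplex volume $\frac{1}{(n-1)!}(1-\lambda)^{n-k}\lambda^{k-1}$. Summing over the $N_k$ extensions with that value of $k$, and over all $k$, yields $\mathrm{V}_{n-1}(M_\lambda) = \frac{1}{(n-1)!}\sum_k N_k (1-\lambda)^{n-k}\lambda^{k-1}$.

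**Matching coefficients.** Comparing this with Minkowski's polynomial identity coefficient by coefficient in the basis $\{(1-\lambda)^{n-k}\lambda^{k-1}\}_{k=1}^n$ — which is a genuine basis for polynomials of degree $\le n-1$ in $\lambda$, so coefficients are uniquely determined — gives $\binom{n-1}{k-1}\V(K[n-k],L[k-1]) = \frac{1}{(n-1)!}N_k$, i.e. $N_k = (n-1)!\binom{n-1}{k-1}^{-1}\cdots$ — wait, one must be careful: the cleanest route avoids the binomial entirely. In fact the standard normalization is that Minkowski's expansion of $\mathrm{V}_{n-1}((1-\lambda)K+\lambda L)$ reads $\sum_{k}\binom{n-1}{k-1}(1-\lambda)^{n-k}\lambda^{k-1}\V(K[n-k],L[k-1])$ with the mixed volume $\V$ normalized so that $\V(C[n-1]) = \mathrm{V}_{n-1}(C)$; hence matching with $\frac{1}{(n-1)!}\sum_k \binom{n-1}{k-1}\big(\text{simplex count with multiplicity}\big)$ is what one wants, and the per-extension simplex volume computation must be arranged so the binomial factors cancel correctly. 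I would therefore be careful in the simplex volume step to track exactly which order simplices appear and with what multiplicity, comparing directly against the known identity $\mathrm{V}_{n-1}(O_P \cap \{t_y - t_x = \lambda\})$ expansion.

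**Main obstacle.** The genuinely delicate point is the explicit volume of each chamber simplex and the bookkeeping that makes the $(n-1)!$ and the binomial coefficients come out right — in particular, verifying that each linear extension with $f(y)-f(x)=k$ contributes a simplex whose $(n-1)$-volume is precisely $\frac{1}{(n-1)!}(1-\lambda)^{n-k}\lambda^{k-1}$ (in an appropriately scaled coordinate on the slice), which is an elementary but fiddly iterated integral over an order polytope with a prescribed gap. Everything else — the identification of $M_\lambda$ as the order-polytope slice, the triangulation by linear extensions, and the final coefficient matching via the basis $\{(1-\lambda)^{n-k}\lambda^{k-1}\}$ — is routine. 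Since the excerpt permits citing \cite[eq.\ (2.14)]{KS84}, in practice I would present the decomposition argument in enough detail to make the identity transparent and defer the purely computational volume evaluation to a reference or a one-line integral.
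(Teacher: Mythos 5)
The paper itself does not prove Lemma~\ref{lem:mvol}; it cites \cite[eq.~(2.14)]{KS84}. Your strategy---slice $O_P$ by $\{t_y-t_x=\lambda\}$, triangulate by linear extensions, and match coefficients against Minkowski's polynomial---is the natural reconstruction of the Kahn--Saks argument and is essentially sound in outline. Two points need repair, however.

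First, your per-chamber volume formula is wrong, and this is precisely where your coefficient matching broke down at the end (you noticed the mismatch but did not locate the error). For a linear extension $f$ with $f(x)=i$ and $f(y)=i+k$, the chamber $\Delta_f\cap\{t_y-t_x=\lambda\}$, parametrized by dropping the $t_y$-coordinate, has constraints $0\le s_1\le\cdots\le s_i$, $s_i\le s_{i+1}\le\cdots\le s_{i+k-1}\le s_i+\lambda$, $s_i+\lambda\le s_{i+k+1}\le\cdots\le s_n\le 1$. Integrating out $s_i$ over $[0,1-\lambda]$ and using the beta integral $\int_0^1\sigma^{i-1}(1-\sigma)^{n-i-k}\,d\sigma=\frac{(i-1)!(n-i-k)!}{(n-k)!}$ gives a volume of
\[
\frac{\lambda^{k-1}(1-\lambda)^{n-k}}{(k-1)!\,(n-k)!}
=\frac{\binom{n-1}{k-1}}{(n-1)!}\,\lambda^{k-1}(1-\lambda)^{n-k},
\]
independent of $i$, \emph{not} $\frac{1}{(n-1)!}\lambda^{k-1}(1-\lambda)^{n-k}$. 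Summing over linear extensions and matching against $\sum_k\binom{n-1}{k-1}(1-\lambda)^{n-k}\lambda^{k-1}\,\V(K[n-k],L[k-1])$, the binomials cancel cleanly and one obtains exactly $N_k=(n-1)!\,\V(K[n-k],L[k-1])$ with no dangling factors. (All volumes here are in the projected/lattice normalization of the slice, which is the convention implicit in the lemma; in the pure Euclidean normalization on the affine hyperplane there is an extra $\sqrt{2}$ on both sides that cancels.)

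Second, the identity $(1-\lambda)K+\lambda L=O_P\cap\{t_y-t_x=\lambda\}$ is not proved by ``coordinatewise interpolation''---there is no obvious coordinatewise decomposition of a point in the slice into points of $K$ and $L$ that respects all the chain inequalities simultaneously. The correct argument is a Cayley-type one: the polytope $Q:=O_P\cap\{0\le t_y-t_x\le 1\}$ has all its vertices in $K\cup L$ (vertices of $O_P$ are $0/1$ vectors, hence have $t_y-t_x\in\{-1,0,1\}$, and any new vertices created by the slicing lie on the two bounding hyperplanes), so $Q=\mathrm{conv}(K\cup L)$; and for any $Q=\mathrm{conv}(K\cup L)$ with $K\subset\{\ell=0\}$, $L\subset\{\ell=1\}$, one has $Q\cap\{\ell=\lambda\}=(1-\lambda)K+\lambda L$ by regrouping a convex combination. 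With these two fixes your proof is complete.
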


With Lemma \ref{lem:mvol} in hand, the Kahn-Saks inequality 
$N_k^2\ge N_{k-1}N_{k+1}$ follows immediately from the
\emph{Alexandrov-Fenchel inequality} \cite[Theorem 7.3.1]{Sch14}
\begin{multline*}
	\V(K,L,K[n-k-1],L[k-2])^2 \ge \\
	\V(K,K,K[n-k-1],L[k-2])\,
	\V(L,L,K[n-k-1],L[k-2]).
\end{multline*}
Equality in the Kahn-Saks inequality is therefore also
a special case of equality in the
Alexandrov-Fenchel inequality, which is characterized in \cite{SvH23}. The 
latter will provide an explicit description of the equality conditions in 
terms of the geometry of the polytopes $K$ and $L$, which we turn to next.

\begin{rem}
\label{rem:dontworry}
In this paper, mixed volumes will only be used in order to apply the 
results of \cite{SvH23} and do not appear outside this section. For this 
reason, we have omitted most background material on mixed volumes, 
referring the interested reader to the excellent monograph \cite{Sch14} 
for a detailed treatment.

Let us note that results on mixed volumes are usually formulated for 
convex bodies lying in a given $n$-dimensional space, whereas the 
polytopes $K$ and $L$ lie in different translates of the 
$(n-1)$-dimensional space $V$. We can readily reduce the latter setting to 
the former (with $n\leftarrow n-1$) by replacing $L$ by 
$L'=L-\frac{e_y-e_x}{2}\subset V$. By translation-invariance of mixed 
volumes and as $h_L(u)=h_{L'}(u)$ for all $u\in V$, any resulting 
statements for $K,L'\subset V$ hold verbatim for $K,L$.
\end{rem}

\subsection{Equality cases}
\label{sec:eqaf}

At the heart of the extremal characterization of the Kahn-Saks inequality 
lies the notion of a $k$-extreme vector.

\begin{defn}
\label{defn:kextreme}
A vector $u\in V$ is said to be \emph{$k$-extreme} if the following hold:
\begin{align*}
	\dim F(K,u) &\ge n-k-1, \\
	\dim F(L,u) &\ge k-2,\\
	\dim F(K+L,u) &\ge n-3.
\end{align*}
\end{defn}

The following is the main result of this section.

\begin{prop}[Kahn-Saks extremals: geometric characterization]
\label{prop:geom}
Let $a>0$, and assume that $N_k>0$. Then the following are equivalent.
\begin{enumerate}[leftmargin=*,itemsep=\topsep,label=\alph*.]
\item $a^2N_{k+1}=aN_k=N_{k-1}$.
\item There exists $v\in V$ so that $h_K(u)=h_{aL+v}(u)$ for all
$k$-extreme $u\in V$.
\end{enumerate}
\end{prop}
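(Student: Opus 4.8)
The plan is to derive Proposition~\ref{prop:geom} directly from the equality characterization of the Alexandrov--Fenchel inequality for convex polytopes in \cite{SvH23}, using Lemma~\ref{lem:mvol} to translate between linear extension numbers and mixed volumes. Working as in Remark~\ref{rem:dontworry} with $K$ and $L' = L - \tfrac{e_y-e_x}{2}$ inside the $(n-1)$-dimensional space $V$, set $m := n-1$ and write the three mixed volumes of interest as $\V(K,L,M)$, $\V(K,K,M)$, $\V(L,L,M)$ where $M = (K[n-k-1],L[k-2])$ is the common tuple of $m-2$ bodies. By Lemma~\ref{lem:mvol} these equal $\tfrac{1}{m!}N_k$, $\tfrac{1}{m!}N_{k-1}$, $\tfrac{1}{m!}N_{k+1}$ respectively, so the equality $a^2 N_{k+1} = a N_k = N_{k-1}$ in (a) is equivalent to $\V(K, aL, M) = \V(K,K,M)$ and $\V(K,aL,M) = \V(aL,aL,M)$ simultaneously (using homogeneity of mixed volumes in each argument), i.e.\ to equality in the Alexandrov--Fenchel inequality for the pair $K$, $aL$ with respect to the reference tuple $M$. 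The assumption $N_k > 0$ guarantees the relevant mixed volume is positive, so we are in the nondegenerate regime where the characterization of \cite{SvH23} applies.

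Next I would invoke the main theorem of \cite{SvH23}: equality $\V(K, C, M)^2 = \V(K,K,M)\,\V(C,C,M)$ with $\V(K,C,M)>0$ holds (for polytopes) if and only if there is a vector $v$ and a scalar — but here the scalar is already absorbed by taking $C = aL$, so the statement reads that $K$ and $aL$ agree on all supporting hyperplanes in the directions that matter, namely that $h_K(u) = h_{aL + v}(u)$ for every $u$ belonging to the relevant set of normal directions. The key point is to identify that set of directions with the $k$-extreme vectors of Definition~\ref{defn:kextreme}. The equality characterization of \cite{SvH23} phrases the relevant directions in terms of the dimensions of the faces $F(K,u)$, $F(C,u)$, and $F(K+C,u)$ relative to the reference tuple $M = (K[n-k-1], L[k-2])$: the "critical" or "extreme" directions are exactly those $u$ for which these faces are large enough that the mixed volume of the corresponding faces with the appropriate lower-dimensional reference tuple does not vanish. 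Spelling this out for $M = (K[n-k-1],L[k-2])$, a direction $u$ is relevant precisely when $F(K,u)$ contains the support of the $(n-k-1)$ copies of $K$ (forcing $\dim F(K,u) \ge n-k-1$), $F(L,u)$ contains the support of the $k-2$ copies of $L$ (forcing $\dim F(L,u) \ge k-2$), and $F(K+L,u)$ is large enough to carry the mixed volume (forcing $\dim F(K+L,u) \ge n-3 = m-2$) — which is exactly Definition~\ref{defn:kextreme}. Thus the set of relevant directions in \cite{SvH23} coincides with the $k$-extreme vectors, and the equality characterization becomes precisely statement (b). Since $h_{L}(u) = h_{L'}(u)$ for $u \in V$, the translate $v$ and the identity $h_K(u) = h_{aL+v}(u)$ transfer back from $K,L'$ to $K,L$ verbatim.

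The main obstacle I expect is the bookkeeping in the second paragraph: matching the precise formulation of the equality conditions in \cite{SvH23} — which are stated for a general reference tuple of convex bodies and involve an inductive/recursive notion of which normal cones are "relevant" — to the concrete tuple $M = (K[n-k-1],L[k-2])$ and verifying that the resulting dimension bounds are exactly the three inequalities defining $k$-extremeness, with no off-by-one errors in the shift between $n$-dimensional and $(n-1)$-dimensional conventions. One has to be careful that the three conditions in Definition~\ref{defn:kextreme} are stated with the dimensions $n-k-1$, $k-2$, $n-3$ while we are working in dimension $m = n-1$, so that e.g.\ $\dim F(K,u) \ge n-k-1$ means $F(K,u)$ has codimension at most $k$ in $V$, etc.; and that the Alexandrov--Fenchel tuple has exactly $m = n-1$ entries ($1 + 1 + (m-2)$), consistent with $(n-k-1) + (k-2) + 2 = n-1$. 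Once this dictionary is pinned down, the proof is a direct citation. I would also remark that the freedom in choosing $a>0$ corresponds exactly to the scaling degree of freedom in the Alexandrov--Fenchel equality characterization, which is why (b) is stated for a fixed but arbitrary $a$ rather than asserting existence of a scalar.
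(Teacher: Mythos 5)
Your overall strategy matches the paper's: translate (a) into a statement about mixed volumes via Lemma~\ref{lem:mvol}, recognize it as an Alexandrov--Fenchel equality case, and invoke the equality characterization of \cite{SvH23}, identifying the $k$-extreme vectors of Definition~\ref{defn:kextreme} with the extreme normal directions appearing in that characterization. However, there is a genuine gap in the way you handle the scaling constant. You write that ``the scalar is already absorbed by taking $C = aL$,'' but this is not correct: the equality characterization in \cite{SvH23}, applied to $K$ and $C = aL$, produces \emph{some} $a'>0$ and $v$ with $h_K(u) = a' h_{aL}(u) + \langle u,v\rangle$ on the extreme directions, and the fact that $a' = 1$ (equivalently, that the scalar is exactly the given $a$ when expressed in terms of $L$) does not come for free simply because you pre-scaled $L$. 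The paper closes this gap by first proving (b)$\Rightarrow$(a) directly (using \cite[Lemma 2.8]{SvH23} on mixed area measures and integrating against $h_K$, $h_L$), and then, in the (a)$\Rightarrow$(b) direction, invoking (b)$\Rightarrow$(a) to pin down that the constant produced by \cite[Corollary~2.16]{SvH23} must equal $a$. You would need some version of this argument, or an explicit observation that the scalar is forced to be $\V(K,K,M)/\V(K,L,M) = N_{k-1}/N_k = a$.

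A secondary issue: to apply \cite[Corollary~2.16]{SvH23} one needs the tuple $(K[n-k-1],L[k-2])$ to be \emph{supercritical} in the sense of \cite[Definition 2.14]{SvH23}, and you justify this only by saying ``$N_k>0$ guarantees the relevant mixed volume is positive.'' That is not enough on its own: supercriticality requires $\dim K \ge n-k+1$, $\dim L \ge k$, and $\dim(K+L) = n-1$, which follow via Lemma~\ref{lem:vanishgeom} from $N_{k-1}>0$ and $N_{k+1}>0$, and \emph{these} in turn come from hypothesis (a) together with $a>0$ and $N_k>0$. This step (which the paper does spell out) should be included, and it also means you cannot simply treat the equivalence as a single two-sided citation, since the positivity needed for (a)$\Rightarrow$(b) is extracted from (a) itself.
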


Before we prove Proposition \ref{prop:geom}, let us also formulate a 
geometric characterization of the vanishing condition.

\begin{lem}[Vanishing: geometric characterization]
\label{lem:vanishgeom}
The following are equivalent.
\begin{enumerate}[leftmargin=*,itemsep=\topsep,label=\alph*.]
\item $N_k=0$.
\item $\dim K<n-k$ or $\dim L<k-1$ or $\dim(K+L)<n-1$.
\end{enumerate}
\end{lem}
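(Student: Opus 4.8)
The plan is to read off Lemma~\ref{lem:vanishgeom} directly from the positivity criterion for mixed volumes, applied to the representation $N_k = (n-1)!\,\V(K[n-k],L[k-1])$ of Lemma~\ref{lem:mvol}. First I would use Remark~\ref{rem:dontworry} to replace $L$ by $L' := L - \frac{e_y-e_x}{2}\subset V$, so that $K$ and $L'$ are convex bodies lying in the single $(n-1)$-dimensional space $V$, with $\V(K[n-k],L[k-1]) = \V(K[n-k],L'[k-1])$, $\dim L' = \dim L$, and $\dim(K+L') = \dim(K+L)$. Thus $N_k = 0$ if and only if the $(n-1)$-dimensional mixed volume of the family consisting of $n-k$ copies of $K$ and $k-1$ copies of $L'$ vanishes.

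Next I would invoke the positivity criterion \cite[Theorem~5.1.8]{Sch14}: for convex bodies $C_1,\dots,C_{n-1}$ in $V$, the mixed volume $\V(C_1,\dots,C_{n-1})$ is strictly positive if and only if $\dim(C_{i_1}+\cdots+C_{i_j})\ge j$ for every $1\le i_1<\cdots<i_j\le n-1$ (equivalently, one can choose segments $S_i\subseteq C_i$ with linearly independent directions). The key elementary observation is that for our family these conditions collapse to only three: writing $\operatorname{lin}(C)$ for the linear subspace parallel to $\operatorname{aff}(C)$, one has $\operatorname{lin}(C+C') = \operatorname{lin}(C)+\operatorname{lin}(C')$ and $\operatorname{lin}(C+C) = \operatorname{lin}(C)$, so the dimension of a Minkowski sum of $p$ copies of $K$ and $q$ copies of $L'$ equals $\dim K$ when $p\ge1$, $q=0$; equals $\dim L' = \dim L$ when $p=0$, $q\ge1$; and equals $\dim(K+L') = \dim(K+L)$ when $p,q\ge1$. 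Running over all sub-multisets, the binding instances of the criterion are the three extreme ones, so it is equivalent to
\[
	\dim K \ge n-k,\qquad \dim L \ge k-1,\qquad \dim(K+L)\ge n-1 .
\]

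Finally, since $N_k = (n-1)!\,\V(K[n-k],L[k-1])$, taking the contrapositive yields exactly $N_k=0$ if and only if $\dim K < n-k$ or $\dim L < k-1$ or $\dim(K+L) < n-1$, which is the asserted equivalence; the edge case $k=1$ (no copies of $L'$) is covered automatically, as the conditions involving $L$ then become vacuous. I do not anticipate a genuine obstacle in this argument: it is entirely elementary once the positivity criterion for mixed volumes is in hand, the only things to verify being the translation bookkeeping of Remark~\ref{rem:dontworry} and the collapse of the Minkowski-sum dimension conditions to the three displayed inequalities.
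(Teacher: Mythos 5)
Your proof is correct and takes essentially the same approach as the paper, which simply cites Lemma~\ref{lem:mvol} together with \cite[Theorem 5.1.8]{Sch14}; you have merely spelled out the translation bookkeeping of Remark~\ref{rem:dontworry} and the collapse of the positivity criterion to the three binding subfamilies, which the paper leaves implicit.
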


\begin{proof}
This is immediate from Lemma \ref{lem:mvol} and
\cite[Theorem 5.1.8]{Sch14}.
\end{proof}

We now complete the proof of Proposition \ref{prop:geom}.

\begin{proof}[Proof of Proposition \ref{prop:geom}]
We begin by observing that a $k$-extreme vector $u$ is called
$(B,K[n-k-1],L[k-2])$-extreme in the terminology of
\cite[Lemma 2.3]{SvH23}. Thus condition (b) implies, using
\cite[Lemma 2.8]{SvH23} and translation invariance, that
$$
	S_{K,K[n-k-1],L[k-2]} = a\,S_{L,K[n-k-1],L[k-2]},
$$
where 
$S_{C_1,\ldots,C_{n-2}}$ denotes the $(n-1)$-dimensional mixed area 
measure. Integrating this identity against $h_K$ and $h_L$ yields
condition (a) by \cite[eq.\ (2.1)]{SvH23} and Lemma~\ref{lem:mvol}. Thus 
we have proved the implication (b)$\Rightarrow$(a).

Now suppose condition (a) holds. Then $N_k>0$ implies $N_{k-1}>0$ and 
$N_{k+1}>0$ as well. Lemma \ref{lem:vanishgeom} yields $\dim K \ge 
n-k+1$, $\dim L\ge k$, and $\dim(K+L)=n-1$, so that the bodies 
$(K[n-k-1],L[k-2])$ are supercritical in the terminology of 
\cite[Definition 2.14]{SvH23}. Therefore, by \cite[Corollary~2.16]{SvH23} 
and Lemma~\ref{lem:mvol}, condition (a) implies that there exist $a'>0$ 
and $v\in V$ so that $h_K(u)=h_{a'L+v}(u)$ for all $k$-extreme vectors 
$u\in V$. It remains to note that we must have $a'=a$ by the implication 
(b)$\Rightarrow$(a). Thus we have proved the 
implication (a)$\Rightarrow$(b). 
\end{proof}

Proposition \ref{prop:geom} provides a complete characterization of the 
equality cases of the Kahn-Saks inequality in terms of the geometry of the 
polytopes $K$ and $L$. It is far from clear, however, how the 
combinatorial structure of $P$ emerges from this geometric information. 
Understanding the latter is the main challenge in the proof of our
main results, which will be addressed in the following sections.

\section{Facial structure}
\label{sec:face}

To exploit Proposition \ref{prop:geom}, we must first develop the 
connection between $k$-extreme vectors and the combinatorial structure of 
the poset. Understanding which vectors are $k$-extreme requires us to 
compute the dimensions of faces of the polytopes $K$, $L$, and $K+L$. In 
this section we will introduce a basic recipe that enables us to compute 
face dimensions in terms of the poset structure. With this recipe in hand, 
we will systematically catalogue the $k$-extreme directions that will be 
relevant in the subsequent analysis. It will turn out that some of the 
combinatorial conditions for vectors to be $k$-extreme are connected to 
the properties in Definition~\ref{defn:conditions}, which explains how 
these arise in the analysis. Another simple application of the basic 
recipe will yield a proof of Lemma \ref{lem:vanish} using Lemma 
\ref{lem:vanishgeom}.

Throughout this section, we denote by $\affp K$ the centered affine hull 
of a set $K$, that is, $\affp K := \sspan(K-K)$.
In particular, note that $\affp(K+L)=\affp K + \affp L$ and that
$\dim K = \dim(\affp K)$ for any convex bodies $K,L$.

\subsection{The basic recipe}

By definition, the polytopes $K,L$ and their faces are defined as 
intersections of the order polytope $O_P$ with certain affine hyperplanes. 
It is straightforward to deduce an upper bound on their dimensions 
from this information and the poset structure. For example, in 
$K=O_P\cap\{t\in\mathbb{R}^P:t_y=t_x\}$, the constraint $t_y=t_x$ forces 
$t_x=t_z=t_y$ for $x<z<y$ by the definition of $O_P$. Thus
$$
	K \subset H=\{t\in\mathbb{R}^P: t_x=t_z=t_y\mbox{ for all }
	z\in P_{x<\cdot<y}\},
$$
which yields $\dim K \le \dim H= n-1-|P_{x<\cdot<y}|$.

To prove a matching lower bound, however, we must show that there are no 
other constraints than those accounted for in the upper bound. To 
make this reasoning formal, we introduce the following definition.

\begin{defn}
For any linear extension $f:P\to[n]$, define the simplex
$$
	\Delta_f := \{t\in[0,1]^P:t_{f^{-1}(1)} \le
	t_{f^{-1}(2)}\le \cdots \le t_{f^{-1}(n)}\}.
$$
Thus $O_P=\bigcup_f \Delta_f$, where
the union is over all linear extensions of $P$.
\end{defn}

To illustrate how this will be used, suppose there exists a linear 
extension $f$ of $P$ so that $P_{x<\cdot<y}$ are the \emph{only} elements 
of $P$ that lie between $x,y$ in the linear order defined by $f$ (the 
existence of such a linear extension will be shown in Lemma 
\ref{lem:linxmid}). If we set $i=f(x)$ and $j=f(y)$, then
$P_{x<\cdot<y}=\{f^{-1}(k):i<k<j\}$ and
\begin{multline*}
	\Delta_f\cap\{t\in\mathbb{R}^P:t_y=t_x\} =\\
	\{t\in[0,1]^P:t_{f^{-1}(1)} \le\cdots
	\le t_{f^{-1}(i)}=\cdots=t_{f^{-1}(j)}\le\cdots
	\le t_{f^{-1}(n)}\}\subseteq K\subset H.
\end{multline*}
As the above set has nonempty relative interior in $H$ (obtained
by replacing the inequalities by strict inequalities), it
follows immediately that $\affp K = H$, and we can therefore compute
$\dim K = n-1-|P_{x<\cdot<y}|$.

The above example shows that the key to computing the dimension is to 
establish the existence of linear extensions that minimally satisfy a 
given set of constraints. We presently prove a number of lemmas that will 
be used to construct such minimal linear extensions. The basic recipe 
illustrated above will be applied systematically in the remainder of this 
section to compute face dimensions.

In the following, we take for granted the standard fact that every poset 
has at least one linear extension (such an extension may be found, for 
example, by iteratively choosing the next element of the linear order 
to be minimal among the poset elements that have not yet been ordered).
We construct three kinds of minimal linear extensions that will be used 
repeatedly below.

\begin{lem}
\label{lem:linxmid}
There exists a linear extension $f:P\to[n]$ so that
$$
	\{z\in P:f(x)<f(z)<f(y)\}=P_{x<\cdot<y}.
$$
\end{lem}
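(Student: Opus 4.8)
The statement asks for a linear extension $f$ in which the only poset elements strictly between $x$ and $y$ (in the linear order) are precisely those in $P_{x<\cdot<y}$. Equivalently, every element of $P_{<x}$, every element of $P_{>y}$, and every element incomparable to both $x$ and $y$ that we might be tempted to place between them must instead be placed either before $x$ or after $y$. The natural approach is to build such an extension greedily in three stages: first linearly order a suitable ``bottom'' set $B$, then insert $x$, the middle elements $P_{x<\cdot<y}$, and $y$, then linearly order a ``top'' set $T$, where $B\cup\{x\}\cup P_{x<\cdot<y}\cup\{y\}\cup T = P$ is a partition.

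The plan is to take $B := \{z\in P : z\le x\text{ or }(z\not\ge x,\,z\not\ge y\text{ and }z\not> w\text{ for any }w\in P_{x<\cdot<y})\}$ — more cleanly, to take $B$ to be the down-set generated by $x$ together with all elements that are \emph{not} forced to lie at or above $x$, at or above some middle element, or at or above $y$; and $T := P_{\ge y}$ together with the remaining elements. The key point I would verify is that $B$ is a down-set (order ideal) of $P$: if $z\in B$ and $w<z$, then $w\in B$, because none of the forcing conditions ($w\ge x$, $w\ge y$, $w$ above a middle element) can hold for $w$ if they fail for $z$. Dually, $T\setminus\{y\}$ together with $P_{x<\cdot<y}\cup\{y\}$ should be an up-set once $B$ and $x$ are removed. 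Having established these order-theoretic facts, I would: (i) pick any linear extension of the induced subposet $B$; (ii) append $x$ (legal since everything below $x$ is in $B$ and $x\notin B$); (iii) append a linear extension of the induced subposet $P_{x<\cdot<y}$ (legal: all its elements cover or exceed $x$, hence lie above $x$, and none lies below $x$ or in $B$); (iv) append $y$ (legal: $P_{<y}\subseteq B\cup\{x\}\cup P_{x<\cdot<y}$, using $x\not\ge y$ so $x<y$ or $x\parallel y$, and in either case every element strictly below $y$ has already been placed — this needs the observation that $P_{<y}\setminus(\{x\}\cup P_{x<\cdot<y})\subseteq B$); (v) append a linear extension of the rest.

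The main obstacle — and the step I would spend the most care on — is item (iv), ensuring that all of $P_{<y}$ is placed before $y$ while simultaneously keeping all of $P_{>x}$-type obstructions out of the middle band. The tension is that an element $z$ with $z<y$ but $z\parallel x$ and $z$ not below any middle element ``wants'' to be in $B$ (to get it out of the way), but we must check it genuinely can be: i.e.\ that such a $z$ is not forced above $x$. Since $z\parallel x$ it is not above $x$, and by hypothesis it is not above any middle element, so the only worry is $z\ge y$, which is false as $z<y$ and $y\not<y$. Hence $z\in B$, as needed. Once this bookkeeping is done, verifying that each appended block respects the partial order is routine: each block is an induced subposet, its own linear extension is internally consistent, and the cross-block comparabilities all point ``downward'' by the down-set/up-set structure. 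I would present $B$ and $T$ explicitly, prove the down-set property in one line, and then assemble $f$ as the concatenation, remarking that $\{z : f(x)<f(z)<f(y)\}$ is exactly the third block $P_{x<\cdot<y}$ by construction.
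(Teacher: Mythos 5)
Your construction is correct, but it takes a genuinely different route from the paper's. The paper gives a variational argument: it picks a linear extension $f$ minimizing $f(y)-f(x)$ and then shows by a local swap that the minimizer cannot place anything outside $P_{x<\cdot<y}$ between $x$ and $y$. You instead build the extension directly by exhibiting an explicit five-block partition $B\sqcup\{x\}\sqcup P_{x<\cdot<y}\sqcup\{y\}\sqcup T$ in which $B$ is a down-set (namely, the set of $z$ with $z\not\ge x$, $z\not\ge y$, and $z\not\ge w$ for every $w\in P_{x<\cdot<y}$) and $T$ collects the leftovers. This is closer in spirit to the paper's Lemma~\ref{lem:linxend}(a), which constructs linear extensions from a lower set and an upper set, than to the paper's own proof of this lemma. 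The variational argument is shorter to write; your construction is more explicit and makes the structure of the extension transparent, at the cost of the bookkeeping you correctly flagged as the crux (verifying that $P_{<y}\setminus(\{x\}\cup P_{x<\cdot<y})\subseteq B$). Two small slips to fix when writing it up: your first displayed definition of $B$ uses the clause $z\le x$, which would put $x$ into $B$, contradicting the later assertion $x\notin B$ --- you want $z<x$, or better, drop that clause entirely since $P_{<x}$ already satisfies the three ``not above'' conditions; and in the paragraph discussing the tension, ``$z$ not below any middle element'' should read ``$z$ not above any middle element'' (the membership condition for $B$ concerns $z\not\ge w$, and indeed $z\parallel x$ already rules out $z>w$ for $w>x$, so this hypothesis is automatic).
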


\begin{proof}
Choose a linear extension $f:P\to[n]$ that minimizes $f(y)-f(x)$ among all 
linear extensions of $P$. By definition, we have $f(x)<f(z)<f(y)$ for
$z\in P_{x<\cdot<y}$. On the other hand, if $f(x)<f(z)<f(y)$ for some 
$z\not\in P_{x<\cdot<y}$, then it must be the case that either 
$z\parallel x$ or $z\parallel y$. We aim to show this cannot occur.

To this end, consider the element $z\parallel x$ so that
$f(x)<f(z)<f(y)$ and $f(z)$ is as small as possible. Then for every
$f(x)<f(z')<f(z)$, we must have $x<z'$ and thus $z'\parallel z$.
We can therefore obtain another linear extension $g$ of $P$ by moving the 
element $z$ to have rank right below $x$, while keeping the order of the 
remaining elements as in $f$. As $g(y)-g(x)=f(y)-f(x)-1$, this contradicts 
minimality of $f(y)-f(x)$.
Thus we have ruled out the existence of $f(x)<f(z)<f(y)$ with 
$z\parallel x$. A completely analogous argument rules out $z\parallel y$, 
concluding the proof.
\end{proof}

\begin{lem}
\label{lem:linxend}
The following hold.
\begin{enumerate}[leftmargin=*,itemsep=\topsep,label=\alph*.]
\item Let $S,T\subset P$ satisfy $S\cap T=\varnothing$, $S$ is a lower set 
\emph{(}$P_{<z}\subseteq S$ for all $z\in S$\emph{)}, and $T$ is an 
upper set \emph{(}$P_{>z}\subseteq T$ for all $z\in T$\emph{)}.
Then there is a linear extension $f:P\to[n]$ so that
$\{z\in P:f(z)\le|S|\}=S$ and 
$\{z\in P:f(z)> n-|T|\}=T$.
\item There exists a linear extension $f:P\to[n]$ so that
$$
	\{z\in P:f(z)<f(x)\}=P_{<x},\qquad
	\{z\in P:f(z)>f(y)\}=P_{>y}.
$$
\end{enumerate}
\end{lem}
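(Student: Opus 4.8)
The plan is to prove both parts through a single ``three-block'' construction. For part (a), set $M := P\setminus(S\cup T)$ and fix linear extensions of the induced sub-posets on $S$, on $M$, and on $T$ (each exists since every finite poset admits a linear extension). Define $f\colon P\to[n]$ by listing the elements of $S$ first in the chosen order, followed by the elements of $M$, followed by the elements of $T$. Then by construction $\{z\in P:f(z)\le|S|\}=S$ and $\{z\in P:f(z)>n-|T|\}=T$, so the only thing that needs to be checked is that $f$ is a linear extension of $P$, i.e.\ that $z<z'$ implies $f(z)<f(z')$.

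To verify this I would argue by the location of $z,z'$ among the blocks. If they lie in the same block, the inequality holds because the chosen block order is a linear extension of the induced sub-poset. If they lie in different blocks, the only way $f(z)<f(z')$ could fail is if $z$ sits in a strictly later block than $z'$; I would rule this out in the two possible cases. If $z'\in S$, then $z<z'$ together with $S$ being a lower set forces $z\in S$, so $z$ cannot sit later. If $z'\in M$, the only later block is $T$, so $z\in T$; but then $z<z'$ together with $T$ being an upper set forces $z'\in T$, a contradiction. (There is no case $z'\in T$, since $T$ is the last block.) This completes part (a); note that disjointness of $S$ and $T$ is used to guarantee that the three blocks actually partition $P$ (otherwise $|S|+|M|+|T|>n$).

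For part (b), I would invoke part (a) with $S := P_{\le x}$ and $T := P_{\ge y}$. These satisfy the hypotheses: $P_{\le x}$ is a lower set and $P_{\ge y}$ is an upper set by transitivity, and they are disjoint because $z\le x$ and $z\ge y$ would give $y\le x$, contradicting $x\not\ge y$ (this is the single place the standing hypothesis enters). Moreover $x$ is the maximum of $S$ and $y$ is the minimum of $T$, so I may additionally choose the linear extension of $S$ used in part (a) to end at $x$ and the linear extension of $T$ to begin at $y$ (a maximal element of a finite poset can always be placed last in some linear extension, and a minimal element first). With these choices $f(x)=|S|=|P_{<x}|+1$ and $f(y)=n-|T|+1=n-|P_{>y}|$, whence $\{z\in P:f(z)<f(x)\}=\{z\in P:f(z)\le|P_{<x}|\}=P_{<x}$ and $\{z\in P:f(z)>f(y)\}=\{z\in P:f(z)>n-|P_{>y}|\}=P_{>y}$, which is exactly the assertion.

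I do not anticipate a genuine obstacle: both parts are elementary. The only points that require attention are bookkeeping ones—checking the cross-block comparabilities in part (a), and in part (b) verifying that $S$ and $T$ are disjoint and that $x,y$ are respectively the maximum of $P_{\le x}$ and the minimum of $P_{\ge y}$, so that they can be slotted into the extreme positions of their blocks.
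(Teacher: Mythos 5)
Your proof is correct and follows essentially the same three-block construction as the paper: list a linear extension of $S$, then of $P\setminus(S\cup T)$, then of $T$, and verify cross-block compatibility using the lower-set and upper-set hypotheses. You spell out the cross-block case analysis and the reduction of part (b) to part (a) in somewhat more detail than the paper, but the substance is identical (and indeed, your observation that $x$ must end the $S$-block and $y$ must begin the $T$-block is automatic, since $x$ is the maximum of $P_{\le x}$ and $y$ the minimum of $P_{\ge y}$).
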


\begin{proof}
As $S$ is a lower set and $T$ is an upper set, any $z\in S$, $z'\in 
P\backslash(S\cup T)$, and $z''\in T$ must satisfy
$z'\not\le z$ and $z'\not\ge z''$. Therefore, if we define a linear 
ordering $f$ 
of $P$ by choosing its $|S|$ smallest 
elements to be any linear extension of $S$, its $|T|$ largest elements to 
be any linear extension of $T$, and the remaining elements to be any 
linear extension of $P\backslash (S\cup T)$, then $f$ is itself a linear
extension of $P$. This proves part (a). Part (b) follows by
choosing $S=P_{\le x}$ and $T=P_{\ge y}$.
\end{proof}

\begin{lem}
\label{lem:linxcover}
Let $z_1,z_2\in P$ such that $z_1\lessdot z_2$, and let $f:P\to[n]$ be any 
linear extension. Then there is a linear extension $f':P\to[n]$ so 
that $f'(z_2)-f'(z_1)=1$, and so that $f'(z)=f(z)$ whenever $f(z)<f(z_1)$ 
or $f(z)>f(z_2)$.
\end{lem}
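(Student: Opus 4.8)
Lemma \ref{lem:linxcover} asserts that, given a covering relation $z_1 \lessdot z_2$ and an arbitrary linear extension $f$, we can find a new linear extension $f'$ that makes $z_2$ cover $z_1$ in the linear order (i.e.\ $f'(z_2) = f'(z_1)+1$) while leaving the positions of all elements strictly below $z_1$ or strictly above $z_2$ unchanged. The plan is to produce $f'$ by relocating a single block of elements: take all the elements of $P$ whose $f$-rank lies strictly between $f(z_1)$ and $f(z_2)$, and move them below $z_1$ (equivalently, move $z_1$ up to sit directly beneath $z_2$), keeping everything else in place. So first I would set $i = f(z_1)$, $j = f(z_2)$, let $M = \{z \in P : i < f(z) < j\}$ be the intermediate block, and define $f'$ by: leave $z \mapsto f(z)$ for $f(z) < i$ and $f(z) > j$; relabel the elements of $M$ so they occupy ranks $i, i+1, \dots, j-2$ in the same relative order as under $f$; and set $f'(z_1) = j-1$, $f'(z_2) = j$. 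This is just a cyclic shift of the ranks $i, \dots, j-1$.

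The only thing that needs checking is that $f'$ is again a linear extension of $P$, i.e.\ that it respects every relation $u < v$. The positions that have changed relative to $f$ are only those of $z_1$ (moved up) and the elements of $M$ (each moved down by one). Since the relative order within $M$ is preserved and the relative order of everything outside $\{z_1\}\cup M$ is preserved, the only relations that could be newly violated are ones between $z_1$ and a member of $M$, or between a member of $M$ and $z_2$. I would argue: (i) no $w \in M$ satisfies $w > z_1$ — for if $w > z_1$ then, since $z_1 \lessdot z_2$ and $f(w) < f(z_2)$, we cannot have $w \ge z_2$, so $z_1 < w$ but $w \parallel z_2$ or $w < z_2$; in fact $w$ lies strictly between $z_1$ and $z_2$ in the partial order only if $P_{z_1 < \cdot < z_2} \ne \varnothing$, contradicting the covering relation, so actually $w \not> z_1$. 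Hence moving $z_1$ above $w$ creates no violation. (ii) Symmetrically, no $w \in M$ satisfies $w < z_2$ with $w$ needing to stay below $z_2$ after the shift — but here we must be careful, since we are moving $w$ down past $z_1$, not past $z_2$; the relation $w < z_2$ is unaffected because $f'(w) < f'(z_1) < f'(z_2)$ still holds. The relation that matters is whether some $w \in M$ has $z_1 < w$: ruled out in (i). And whether $z_1 < z_2$ is still respected: yes, $f'(z_1) = j-1 < j = f'(z_2)$.

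So the heart of the matter, and the only genuinely substantive step, is the observation that because $z_1 \lessdot z_2$ is a covering relation (so $P_{z_1 < \cdot < z_2} = \varnothing$) and every $w \in M$ has $f(z_1) < f(w) < f(z_2)$, no element $w \in M$ can satisfy $z_1 < w$ — indeed $z_1 < w < z_2$ is forbidden by the covering relation, $z_1 < w$ with $w \ge z_2$ would force $f(w) \ge f(z_2)$, and $z_1 < w$ with $w \parallel z_2$ is the only remaining case which does not directly contradict the covering relation. Let me reconsider: the cleanest approach is instead to move the block $M$ down below $z_1$ but to note we need $w \not> z_1$ for each such $w$; this can fail if $w > z_1$ and $w \parallel z_2$. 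The robust fix is to move $z_1$ down instead: no — the simplest correct procedure is to move $z_2$ down to rank $i+1$ and shift $M$ up by one, which requires $w \not< z_2$ for all $w \in M$; but that can also fail. I would therefore adopt the following clean argument: among all the elements of $M$, iteratively pull out a chain connecting $z_1$ upward; more simply, I expect the intended proof reorders $M$ so that elements comparable to $z_1$ come first, then uses that $z_1 \lessdot z_2$ forces the minimal-rank element to behave. The cleanest honest route: consider $f'' $ obtained by moving $z_2$ to rank $f(z_1)+1$; this fails only if some $w\in M$ has $w<z_2$. Instead, I would process as follows — this is the real content and the step I expect to be the main obstacle: carefully choose which single element to slide, using the covering hypothesis to guarantee the slide is legal, exactly as in the proof of Lemma \ref{lem:linxmid} where the element $z \parallel x$ of minimal rank is moved. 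Concretely, I would show by a short induction on $j - i$ that one can move elements of $M$ past $z_1$ one at a time (each time picking an element of $M$ of maximal rank that is not above $z_1$, which must exist since otherwise the element of $M$ of maximal rank is above $z_1$ hence, not being above $z_2$, strictly between them, contradiction), thereby compressing the gap between $z_1$ and $z_2$ while never disturbing the ranks outside the interval, until $f'(z_2) - f'(z_1) = 1$.
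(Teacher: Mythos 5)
You correctly identify the right strategy -- close the gap between $z_1$ and $z_2$ by sliding elements out of the interval one at a time, exploiting the covering relation -- and the paper's own proof does essentially the same thing, phrased as an extremality argument: choose $f'$ minimizing $f'(z_2)-f'(z_1)$ among the allowed extensions and derive a contradiction exactly as in Lemma~\ref{lem:linxmid}. But your concrete implementation has a genuine gap.

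The existence claim in your final sentence is false. You argue that some element of $M$ is not above $z_1$, because otherwise the maximal-rank element $w\in M$ would satisfy $w>z_1$ and $w\not>z_2$, ``hence strictly between them.'' But $w\not>z_2$ does not force $w<z_2$; the case $w\parallel z_2$ is allowed. Concretely, take $P=\{z_1,z_2,w\}$ with $z_1\lessdot z_2$, $z_1<w$, $w\parallel z_2$, and $f=(z_1,w,z_2)$. Then $M=\{w\}$, and its only element \emph{is} above $z_1$, so no downward slide is possible; the only legal move sends $w$ above $z_2$. The dichotomy that the covering relation actually yields, and which the paper uses, is that each $w\in M$ is either $\parallel z_1$ (slide it below $z_1$) or $\parallel z_2$ (slide it above $z_2$): you must allow moves in both directions, not only below $z_1$. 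There is also a secondary slip: to slide an element $w\parallel z_1$ past $z_1$, you should pick it of \emph{minimal} rank in the gap (so that every element strictly between $z_1$ and $w$ is above $z_1$, hence incomparable to $w$, exactly as in Lemma~\ref{lem:linxmid}); choosing maximal rank does not by itself guarantee the slide preserves the partial order. With both points repaired, your iterative argument goes through and coincides with the paper's.
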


\begin{proof}
Choose a linear extension $f':P\to[n]$ that minimizes $f'(z_2)-f'(z_1)$ 
among all linear extensions of $P$ such that $f'(z)=f(z)$ when
$f(z)<f(z_1)$ or $f(z)>f(z_2)$. If there exists $f'(z_1)<f'(z)<f'(z_2)$,
then it must be the case that either $z\parallel z_1$ or $z\parallel z_2$
as $z_1\lessdot z_2$.
This entails a contradiction as in the proof of Lemma \ref{lem:linxmid}.
\end{proof}

\subsection{The vanishing condition}

As a first illustration of the basic recipe introduced above, we will now 
give a short proof of Lemma \ref{lem:vanish} using Lemma 
\ref{lem:vanishgeom}. Before we do so, let us define a shorthand notation 
for the linear spaces that will appear repeatedly throughout the remainder 
of this section.

\begin{defn}
For any disjoint subsets $S_1,\ldots,S_k\subseteq P$, we define
$$
	\llangle S_1,\ldots,S_k\rrangle :=
	\sspan\big\{
	{\textstyle 
	\sum_{z\in S_1}e_z,\ldots,\sum_{z\in S_k}e_z,
	\mathbb{R}^{P\backslash\{S_1\cup\cdots\cup S_k\}}
	}
	\big\}.
$$
Note that $\dim \llangle S_1,\ldots,S_k\rrangle = 
n+k-\sum_{i=1}^k|S_i|$.
\end{defn}

We now proceed to the proof.

\begin{proof}[Proof of Lemma \ref{lem:vanish}]
As $K=O_P\cap\{t\in\mathbb{R}^P:t_y=t_x\}$, every $t\in K$ must satisfy
$t_x=t_z=t_y$ for all $z\in P_{x<\cdot<y}$. Thus 
$\affp K \subseteq \llangle P_{x\le\cdot\le y}\rrangle$.
On the other hand, if $f$ is the linear extension provided by Lemma 
\ref{lem:linxmid}, we have 
$$
	\llangle P_{x\le\cdot\le y}\rrangle = 
	\affp(\Delta_f\cap\{t\in\mathbb{R}^P:t_y=t_x\})\subseteq \affp K.
$$
We have therefore shown that 
$\affp K = \llangle P_{x\le\cdot\le y}\rrangle$.

Similarly, as
$L=O_P\cap\{t\in\mathbb{R}^P:t_x=0,t_y=1\}$, every $t\in L$ satisfies
$t_z=t_x=0$ for all $z\in P_{<x}$ and $t_z=t_y=1$ for all $z\in P_{>y}$.
Thus $\affp L\subseteq \mathbb{R}^{P\backslash (P_{\le x}\cup 
P_{\ge y})}$. On the other hand, if $f$ is the linear extension provided 
by Lemma \ref{lem:linxend}(b), we have 
$$
	\mathbb{R}^{P\backslash (P_{\le x}\cup P_{\ge y})} =
	\affp(\Delta_f\cap\{t\in\mathbb{R}^P:t_x=0,t_y=1\})
	\subseteq \affp L.
$$ 
We have therefore shown that
$\affp L = \mathbb{R}^{P\backslash (P_{\le x}\cup P_{\ge y})}$.

We can now immediately conclude that
\begin{alignat*}{3}
	& \dim K &&=
	n+1-|P_{x\le\cdot\le y}| &&=n-1-|P_{x<\cdot<y}|, \\
	&\dim L &&= n - |P_{\le x}\cup P_{\ge y}| &&=
	n-2-|P_{<x}|-|P_{>y}|.
\end{alignat*}
But as $\affp(K+L)=\affp K+\affp L =
\llangle \{x,y\}\rrangle$, we also have $\dim(K+L)=n-1$.
The conclusion of Lemma \ref{lem:vanish} now follows 
from Lemma \ref{lem:vanishgeom}.
\end{proof}

The following corollary of Lemma \ref{lem:vanish} will be used frequently.

\begin{cor}
\label{cor:eqpos}
If $N_k>0$ and $N_k^2=N_{k-1}N_{k+1}$, then
$$
	|P_{x<\cdot<y}|+1 < k < n-1-|P_{<x}|-|P_{>y}|.
$$
\end{cor}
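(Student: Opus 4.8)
The plan is to derive Corollary \ref{cor:eqpos} directly from the vanishing conditions, using the observation that equality in the Kahn-Saks inequality with $N_k > 0$ forces both neighbors $N_{k-1}$ and $N_{k+1}$ to be positive as well. Indeed, if $N_k^2 = N_{k-1}N_{k+1}$ and $N_k > 0$, then neither $N_{k-1}$ nor $N_{k+1}$ can vanish, since $N_{k-1} = 0$ or $N_{k+1} = 0$ would force $N_k^2 = 0$, contradicting $N_k > 0$. Thus we have $N_{k-1} > 0$, $N_k > 0$, and $N_{k+1} > 0$ simultaneously.

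Next I would apply Lemma \ref{lem:vanish} (the vanishing condition) in its contrapositive form to each of these three indices. For an index $\ell \in [n-1]$, the statement $N_\ell > 0$ is equivalent to the negation of condition (b) of Lemma \ref{lem:vanish}, namely to the conjunction
$$
	|P_{<x}| + |P_{>y}| \le n - \ell - 1
	\qquad\text{and}\qquad
	|P_{x<\cdot<y}| \le \ell - 1.
$$
Applying the first inequality with $\ell = k+1$ gives $|P_{<x}| + |P_{>y}| \le n - k - 2$, i.e.\ $k < n - 1 - |P_{<x}| - |P_{>y}|$, which is the right-hand inequality of the corollary. Applying the second inequality with $\ell = k-1$ gives $|P_{x<\cdot<y}| \le k - 2$, i.e.\ $|P_{x<\cdot<y}| + 1 < k$, which is the left-hand inequality. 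Combining the two yields the desired chain $|P_{x<\cdot<y}| + 1 < k < n - 1 - |P_{<x}| - |P_{>y}|$.

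There is essentially no obstacle here: the argument is a short bookkeeping exercise once Lemma \ref{lem:vanish} is available. The only point worth checking is that the indices $k-1$ and $k+1$ genuinely lie in the range $[n-1]$ where Lemma \ref{lem:vanish} applies, but this is guaranteed by the standing assumption $2 \le k \le n-2$ from Theorem \ref{thm:ks}, so $k-1 \ge 1$ and $k+1 \le n-1$. I would present this as a three-line proof: note that equality with $N_k > 0$ forces $N_{k\pm 1} > 0$; invoke Lemma \ref{lem:vanish} at $\ell = k-1$ and $\ell = k+1$; read off the two inequalities and combine.
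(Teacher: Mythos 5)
Your proof is correct and follows exactly the same route as the paper's: observe that equality with $N_k>0$ forces $N_{k\pm1}>0$, then read off the two inequalities from the contrapositive of Lemma~\ref{lem:vanish} applied at $\ell=k-1$ and $\ell=k+1$. The paper states this in one line; you have simply unpacked the bookkeeping.
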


\begin{proof}
The assumption implies that $N_{k-1}>0$ and $N_{k+1}>0$. The conclusion 
follows immediately from Lemma \ref{lem:vanish}.
\end{proof}

\subsection{Extreme vectors}

We now turn to the main task of this section, which is to understand which 
vectors are $k$-extreme. More precisely, as we will only use the 
implication (a)$\Rightarrow$(b) of Proposition \ref{prop:geom}, we do not 
need to (and will not) characterize \emph{all} $k$-extreme vectors; it 
suffices to find vectors that carry significant combinatorial information. 
To this end, we will consider the following vectors.

\begin{defn}
\label{defn:candidates}
A vector $u\in V$ is called a
\begin{enumerate}[leftmargin=*,itemsep=\topsep,label=\alph*.]
\item \emph{coordinate vector} if $u=\pm e_z$ for $z\in 
P\backslash\{x,y\}$;
\item \emph{transition vector} if $u=e_{zz'} := e_z-e_{z'}$ for $z,z'\in 
P\backslash\{x,y\}$;
\item \emph{anchor vector} if 
$u=e_{zxy} := e_z-\frac{e_x+e_y}{2}$ or
$e_{xyz} := \frac{e_x+e_y}{2}-e_z$ for $z\in P\backslash\{x,y\}$.
\end{enumerate}
\end{defn}

The motivation for considering these particular vectors is 
straightforward. By definition, a vector $u\in V$ is $k$-extreme if the 
associated faces of $K$, $L$, and $K+L$ are sufficiently high-dimensional. 
As $K$ and $L$ are slices of the order polytope $O_P$, the most natural 
candidates for such vectors are the normal directions of the facets (i.e., 
the highest-dimensional faces) of $O_P$. It is well known \cite[\S 
1]{Sta86} that $u$ is a facet normal of $O_P$ if and only if $u=\pm e_z$ 
for a maximal (minimal) element $z$ of $P$, or if $u=e_z-e_{z'}$ for 
$z\lessdot z'$. This motivates the consideration of coordinate and 
transition vectors for $z,z'\not\in \{x,y\}$. Note, however, that (for 
example) $e_z-e_x\not\in V$ and thus cannot be $k$-extreme. In these cases 
we consider instead the projections of such vectors onto $V$, which are 
the anchor vectors.

\begin{rem}
The above logic suggests we should consider one additional case 
$u=\pm\frac{e_x+e_y}{2}$, which corresponds to projecting $\pm e_x$ or 
$\pm e_y$ onto $V$. In principle such vectors may indeed be needed in 
the analysis when $x$ is a minimal element of $P$ or when $y$ is a maximal 
element of $P$. 
However, in the proof of our main results we will be able to assume 
without loss of generality this is not the case, as inserting a new 
element in $P$ that is smaller (larger) than all the other elements does 
not change the numbers $N_k$. This simple observation does not 
make any fundamental difference to the proof, but slightly shortens the 
analysis in a few places.
\end{rem}

We now proceed to systematically investigate the combinatorial conditions
for coordinate, transition, and anchor vectors to be $k$-extreme.

\subsubsection{Coordinate vectors}

We begin with a basic observation.

\begin{lem}
\label{lem:hcoord}
For $z\in P\backslash\{x,y\}$, the following hold.
\begin{enumerate}[leftmargin=*,itemsep=\topsep,label=\alph*.]
\item If $z$ is a maximal element of $P$, then $h_K(e_z)=h_L(e_z)=1$.
\item If $z$ is a minimal element of $P$, then $h_K(-e_z)=h_L(-e_z)=0$.
\end{enumerate}
\end{lem}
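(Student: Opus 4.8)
The statement to prove is Lemma~\ref{lem:hcoord}: for $z\in P\backslash\{x,y\}$, if $z$ is a maximal element of $P$ then $h_K(e_z)=h_L(e_z)=1$, and if $z$ is a minimal element then $h_K(-e_z)=h_L(-e_z)=0$. The plan is to compute these support function values directly from the definitions of $K$ and $L$ as slices of the order polytope $O_P$. By definition $h_K(e_z)=\sup_{t\in K} t_z$ and $h_K(-e_z)=-\inf_{t\in K} t_z$, and similarly for $L$; since $K,L\subseteq[0,1]^P$, we automatically have $0\le t_z\le 1$ for every $t\in K\cup L$, which immediately gives the upper bound $h_K(e_z),h_L(e_z)\le 1$ and the lower bound $h_K(-e_z),h_L(-e_z)\ge 0$ (equivalently $\inf t_z\le 1$ is trivial; what we need is $\sup t_z\ge$ something and $\inf t_z\le$ something).

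For part (a), suppose $z$ is maximal. I would exhibit a point of $K$ (and of $L$) whose $z$-coordinate equals $1$. Consider the point $t$ with $t_w=1$ for all $w\in P$: this lies in $O_P$ since all coordinates are equal, and it satisfies $t_y-t_x=0$, so $t\in K$ and $t_z=1$, giving $h_K(e_z)=1$. For $L$ we need $t_x=0$, $t_y=1$; take the linear extension $f$ from Lemma~\ref{lem:linxend}(b) with $\{w:f(w)<f(x)\}=P_{<x}$ and $\{w:f(w)>f(y)\}=P_{>y}$, and define $t_w=0$ for $f(w)\le f(x)$ and $t_w=1$ for $f(w)>f(x)$ — this is a $0/1$ point lying in $\Delta_f\subseteq O_P$ with $t_x=0$, $t_y=1$, hence in $L$. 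Since $z$ is maximal it is not below $x$, so $f(z)>f(x)$ (in fact $z\notin P_{<x}$ forces $f(z)\ge f(x)$, and $z\ne x$), hence $t_z=1$ and $h_L(e_z)=1$. Actually a cleaner choice: any $t\in L$ with $t_z$ as large as possible; since $z$ is maximal, the constraint $t_w\le t_{w'}$ for $w<w'$ never forces $t_z<1$, and one checks the point just described works. Part (b) is entirely analogous: if $z$ is minimal, the all-zero point lies in $K$ (giving $h_K(-e_z)=0$), and for $L$ one uses the same $0/1$ extension point from Lemma~\ref{lem:linxend}(b), where minimality of $z$ and $z\notin P_{>y}$ force $f(z)\le f(x)$ so $t_z=0$.

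The main (and only) subtlety is making sure the witness point for $L$ genuinely lies in $L$, i.e.\ that it respects all the order relations and has $t_x=0$, $t_y=1$ simultaneously — this is exactly what Lemma~\ref{lem:linxend}(b) guarantees, so no real obstacle remains; everything else is bookkeeping against the inclusions $K,L\subseteq[0,1]^P$. I expect the write-up to be a few lines, invoking Lemma~\ref{lem:linxend}(b) for the $L$ computations and the constant points $t\equiv 1$, $t\equiv 0$ for the $K$ computations.
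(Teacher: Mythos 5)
Your overall strategy matches the paper's: both exhibit explicit witness points in $K$ and $L$ attaining the extreme value of $t_z$, with the inclusions $K,L\subseteq[0,1]^P$ supplying the matching bound. (The paper writes down the $L$-witness directly as the indicator vector $1_{z'\in P_{\ge y}\cup\{z\}}$ of an upper set, rather than routing through Lemma~\ref{lem:linxend}(b); either is fine.)

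However, your sketch of part~(b) contains an error. You assert that minimality of $z$ and $z\notin P_{>y}$ force $f(z)\le f(x)$ for the linear extension $f$ of Lemma~\ref{lem:linxend}(b). This is false: that lemma guarantees $\{w:f(w)<f(x)\}=P_{<x}$, so $\{w:f(w)\le f(x)\}=P_{\le x}$, and a minimal $z$ can be incomparable to $x$ (e.g.\ $P=\{x,y,z\}$ with $x<y$ and $z$ incomparable to both), in which case $z\notin P_{\le x}$ and $f(z)>f(x)$. Your chosen $0/1$ point then has $t_z=1$, not $0$. The fix is to move the threshold: take $t_w=1$ iff $f(w)\ge f(y)$. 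This still lies in $\Delta_f$, has $t_x=0$ and $t_y=1$, and since $z$ is minimal with $z\ne y$ we have $z\notin P_{\ge y}$, hence $f(z)<f(y)$ and $t_z=0$ as desired. (Equivalently, the paper's style of witness for part~(b) would be $t'_{z'}=1_{z'\in P_{\ge y}}$, the indicator of the upper set $P_{\ge y}$, which avoids the linear extension entirely.)
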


\begin{proof}
If $z$ is a maximal element of $P\backslash\{x,y\}$, then $e_z\in K$.
Thus
$$
	1=\langle e_z,e_z\rangle
	\le
	h_K(e_z) = \sup_{t\in K}\langle e_z,t\rangle \le 1,
$$
where we used that $K\subseteq[0,1]^P$ in the second inequality.
Therefore $h_K(e_z)=1$. Now let $t'\in\mathbb{R}^P$ be
defined by $t_{z'}'=1_{z'\in P_{\ge y}\cup \{z\}}$. As $z$ is maximal,
$t'\in L$ and thus
$$
	1 = \langle e_z,t'\rangle \le 
	h_L(e_z) = \sup_{t\in L}\langle e_z,t\rangle \le 1.
$$
We have therefore shown that $h_L(e_z)=1$, concluding the proof of part 
(a). The proof of part (b) is completely analogous.
\end{proof}

We can now exhibit $k$-extreme coordinate vectors.

\begin{lem}
\label{lem:kxcoord}
Let $N_k^2=N_{k-1}N_{k+1}>0$.
For $z\in P\backslash\{x,y\}$, the following hold.
\begin{enumerate}[leftmargin=*,itemsep=\topsep,label=\alph*.]
\item If $z$ is a maximal element of $P$, then $e_z$ is $k$-extreme.
\item If $z$ is a minimal element of $P$, then $-e_z$ is $k$-extreme.
\end{enumerate}
\end{lem}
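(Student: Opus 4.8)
The plan is to verify the three dimension inequalities in Definition \ref{defn:kextreme} for $u = e_z$ when $z$ is maximal (the minimal case being dual). Since $N_k^2 = N_{k-1}N_{k+1} > 0$ forces $N_{k-1}, N_{k+1} > 0$, Corollary \ref{cor:eqpos} gives us the strict inequalities $|P_{x<\cdot<y}| + 1 < k < n-1-|P_{<x}|-|P_{>y}|$, which will provide the numerical slack needed. First I would identify the faces $F(K, e_z)$, $F(L, e_z)$, and $F(K+L, e_z)$ explicitly. By Lemma \ref{lem:hcoord}(a), $h_K(e_z) = h_L(e_z) = 1$, so $F(K, e_z) = \{t \in K : t_z = 1\}$ and similarly for $L$; since $F(C+C', u) = F(C,u) + F(C',u)$, it suffices to bound $\dim F(K, e_z)$ and $\dim F(L, e_z)$ and then note $\affp F(K+L, e_z) = \affp F(K, e_z) + \affp F(L, e_z)$.

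The key step is a dimension computation via the basic recipe: $F(K, e_z)$ is the slice $O_P \cap \{t_x = t_y\} \cap \{t_z = 1\}$, which imposes $t_w = 1$ for all $w \in P_{\ge z}$ and $t_x = t_w = t_y$ for $w \in P_{x<\cdot<y}$, so $\affp F(K, e_z) \subseteq \llangle P_{\ge z}, P_{x\le\cdot\le y}\rrangle$ (after checking $x, y \notin P_{\ge z}$, which holds as $z$ is maximal and $z \ne x, y$ — note $z \ge y$ would force $z = y$). The reverse inclusion follows by producing a linear extension via Lemma \ref{lem:linxend}(a) with $S = \varnothing$-type data, or more precisely a linear extension putting $P_{\ge z}$ at the top and $P_{x<\cdot<y}$ sandwiched between $x$ and $y$ — one can combine Lemma \ref{lem:linxmid} with Lemma \ref{lem:linxend}(a) applied to the upper set $T = P_{\ge z}$, then take the slice's relative interior. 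This yields $\dim F(K, e_z) = n - 1 - |P_{x<\cdot<y}| - |P_{\ge z} \setminus P_{x\le\cdot\le y}|$; bounding $|P_{\ge z} \setminus \{z\}| \le |P| - |P_{x\le\cdot\le y}| - |P_{<x}| - |P_{>y}| - \cdots$ crudely and invoking the right-hand inequality of Corollary \ref{cor:eqpos} gives $\dim F(K, e_z) \ge n - k - 1$. Analogously $\dim F(L, e_z) \ge k - 2$ using the left-hand inequality, and $\dim F(K+L, e_z) \ge n-3$ comes from adding the affine hulls and checking the spans are complementary enough (the ``$+1$'' in each of the mixed-volume-style dimension counts combines to give $n-3$ rather than $n-4$, exactly as in the proof of Lemma \ref{lem:vanish} where $\dim(K+L) = n-1$).

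The main obstacle I anticipate is the bookkeeping in the third inequality: one must check that $\affp F(K, e_z) + \affp F(L, e_z)$ does not lose an extra dimension beyond what the individual bounds predict. The safe way is to mimic the structure of the proof of Lemma \ref{lem:vanish}: compute $\affp F(K, e_z) = \llangle P_{\ge z}, P_{x\le\cdot\le y}\rrangle$ and $\affp F(L, e_z) = \llangle P_{\ge z} \cup P_{\ge y}, \text{(suitable)}\rrangle \cap \mathbb{R}^{P \setminus (P_{\le x})}$ precisely, then observe their sum contains $\mathbb{R}^{P \setminus (\{z\} \cup P_{x\le\cdot\le y})}$ plus two independent "merged coordinate" directions, giving codimension at most $\le $ the required bound. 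Since the slack from Corollary \ref{cor:eqpos} is at least one unit on each side, and the facet $\{t_z = 1\}$ only removes $z$ itself from the "free" coordinates (the rest of $P_{\ge z}$ being forced but $z$ maximal means $P_{\ge z} = \{z\}$!), I expect the computation actually simplifies dramatically: $F(K, e_z) = K \cap \{t_z = 1\}$ just fixes one coordinate, dropping dimension by exactly $1$, so $\dim F(K, e_z) = \dim K - 1 = n - 2 - |P_{x<\cdot<y}| \ge n - k - 1$ by Corollary \ref{cor:eqpos}, and likewise $\dim F(L, e_z) = \dim L - 1 = n - 3 - |P_{<x}| - |P_{>y}| \ge k - 2$, while $\dim F(K+L, e_z) = \dim(K+L) - 1 = n - 2 \ge n - 3$. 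I would state this clean version, with the one subtlety being the verification that $t_z = 1$ is genuinely a nontrivial constraint cutting dimension by exactly one (rather than zero), which follows since $K, L$ each contain points with $t_z < 1$ — e.g. the barycenter-type points used implicitly in Lemma \ref{lem:hcoord} together with a point attaining $t_z = 1$.
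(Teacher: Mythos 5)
Your overall strategy matches the paper's: identify $F(K,e_z)$, $F(L,e_z)$ via Lemma~\ref{lem:hcoord}, compute their affine hulls with the basic recipe (a linear extension placing $z$ at the top, via Lemmas~\ref{lem:linxmid} and~\ref{lem:linxend}), and then read off $k$-extremality from Corollary~\ref{cor:eqpos}. This is exactly the paper's argument, and the observation that $P_{\ge z}=\{z\}$ for $z$ maximal is the same simplification the paper uses implicitly.

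However, your ``clean'' final paragraph contains a real slip. You claim $\dim F(L,e_z)=\dim L-1$, justified by ``$K,L$ each contain points with $t_z<1$.'' This is false when $z>y$: on all of $L$ one has $t_y=1$, hence $t_z=1$ for every $z\in P_{\ge y}$, so $F(L,e_z)=L$ and the face has the \emph{same} dimension as $L$, not one less. Moreover, even in the case $z\parallel y$ where the constraint is genuinely nontrivial, exhibiting a point of $L$ with $t_z<1$ only shows that $F(L,e_z)$ is a proper face, not that it is a facet; the drop by exactly one dimension still requires the linear-extension construction you sketch earlier, and cannot be deduced from nontriviality alone. (A related issue: writing $\affp F(K,e_z)\subseteq\llangle P_{\ge z},P_{x\le\cdot\le y}\rrangle$ misuses the $\llangle\cdot\rrangle$ notation --- on the face the $P_{\ge z}$-coordinates of $F-F$ are each zero, not merely equal --- so for $P_{\ge z}=\{z\}$ that expression collapses to $\llangle P_{x\le\cdot\le y}\rrangle=\affp K$, which is too large.) The paper's formula $\dim F(L,e_z)=n-2-|P_{<x}|-|P_{>y}\cup\{z\}|$ handles both $z>y$ and $z\parallel y$ in one stroke. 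Your error happens to be harmless here because it underestimates the face dimension only in the direction favorable to the inequality $\ge k-2$, so the lemma's conclusion is still correct, but the assertion as stated is wrong and the justification is not sound.
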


\begin{proof}
Let $z$ be a maximal element of $P$.
Then Lemma \ref{lem:hcoord} yields
\begin{align*}
	F(K,e_z) &= O_P \cap
	\{t\in\mathbb{R}^P:t_y=t_x,t_z=1\},\\
	F(L,e_z) &= O_P \cap
	\{t\in\mathbb{R}^P:t_x=0,t_y=t_z=1\}.
\end{align*}
Let us compute the affine hulls.
\begin{enumerate}[leftmargin=*,itemsep=\topsep,label=\textbullet]
\item
Clearly $\affp F(K,e_z) \subseteq \llangle 
P_{x\le\cdot\le y}\rrangle\cap\mathbb{R}^{P\backslash\{z\}}$ (note that 
$z\not\in P_{x\le\cdot\le y}$ as $z$ is maximal). Now let $f$ be the 
linear ordering of $P$ obtained by applying Lemma \ref{lem:linxmid} to 
$P\backslash\{z\}$ and setting $f(z)=n$. As $z$ is maximal, $f$ is a 
linear extension of $P$. Thus
$$
	\llangle P_{x\le\cdot\le y}\rrangle
	\cap \mathbb{R}^{P\backslash\{z\}}
	=
	\affp(\Delta_f\cap\{t\in\mathbb{R}^P:t_y=t_x,t_z=1\})
	\subseteq \affp F(K,e_z).
$$
\item 
Clearly $\affp F(L,e_z) \subseteq
\mathbb{R}^{P\backslash(P_{\le x}\cup P_{\ge y}\cup\{z\})}$. 
Now let $f$ 
be the linear extension of $P$ obtained by applying Lemma 
\ref{lem:linxend}(b) to $P\backslash\{z\}$ and setting $f(z)=n$. Then
$$
	\mathbb{R}^{P\backslash(P_{\le x}\cup P_{\ge y}\cup\{z\})} 
	=
	\affp(\Delta_f\cap\{t\in\mathbb{R}^P:t_x=0,t_y=t_z=1\})
	\subseteq \affp F(L,e_z).
$$
\end{enumerate}
Thus $\affp F(K,e_z) = \llangle P_{x\le\cdot\le 
y}\rrangle\cap \mathbb{R}^{P\backslash\{z\}}$ and
$\affp F(L,e_z) = \mathbb{R}^{P\backslash(P_{\le 
x}\cup P_{\ge y}\cup\{z\})}$,
which implies $\affp F(K+L,e_z) =
\llangle \{x,y\}\rrangle\cap \mathbb{R}^{P\backslash\{z\}}$. Therefore
\begin{align*}
	\dim F(K,e_z) &= n-2-|P_{x<\cdot<y}|,\\
	\dim F(L,e_z) &= n-2-|P_{<x}|-|P_{>y}\cup\{z\}|,\\
	\dim F(K+L,e_z) &= n-2.
\end{align*}
It follows readily from
Corollary \ref{cor:eqpos} that $e_z$ is $k$-extreme, which concludes the 
proof of part (a). The proof of part (b) is completely analogous.
\end{proof}

\subsubsection{Transition vectors}

We now turn our attention to the transition vectors. As above, we 
must first identify the corresponding supporting hyperplanes.

\begin{lem}
\label{lem:htrans}
Consider $z,z'\in P\backslash\{x,y\}$ such that $z<z'$, and assume that 
either $z\not\in P_{<x}$ or $z'\not\in P_{>y}$.
Then we have $h_K(e_{zz'}) = h_L(e_{zz'})=0$.
\end{lem}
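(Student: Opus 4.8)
The plan is to compute both support functions by sandwiching: an upper bound coming from monotonicity of points in the order polytope, and a matching lower bound obtained by exhibiting explicit $0/1$ points of $K$ and $L$ on which the functional $\langle e_{zz'},\cdot\rangle$ vanishes. The hypothesis ``$z\notin P_{<x}$ or $z'\notin P_{>y}$'' will only be needed for the lower bound on $h_L$.

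For the upper bound, I use that $K,L\subseteq O_P$ and $z<z'$, so every $t$ in $K$ or $L$ satisfies $t_z\le t_{z'}$, i.e.\ $\langle e_{zz'},t\rangle = t_z-t_{z'}\le 0$; hence $h_K(e_{zz'})\le 0$ and $h_L(e_{zz'})\le 0$. For the lower bound on $h_K$: the zero vector lies in $K$ (it is monotone and satisfies $t_y=t_x=0$) and $\langle e_{zz'},0\rangle = 0$, so $h_K(e_{zz'})\ge 0$, giving $h_K(e_{zz'})=0$ with no hypothesis needed.

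For the lower bound on $h_L$, I argue by cases, using throughout that for any upper set $U$ the vector $\mathbf{1}_U$ lies in $O_P$, and lies in $L$ whenever $y\in U$ and $x\notin U$. If $z'\notin P_{>y}$, then $z'\not\ge y$ (as $z'\ne y$), and also $z\not\ge y$, since $z\ge y$ together with $z<z'$ would force $z'> y$; hence $\mathbf{1}_{P_{\ge y}}$ has $z$- and $z'$-coordinates both equal to $0$, and since $x\not\ge y$ we have $\mathbf{1}_{P_{\ge y}}\in L$, so $\langle e_{zz'},\mathbf{1}_{P_{\ge y}}\rangle=0$ and $h_L(e_{zz'})\ge 0$. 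If instead $z\notin P_{<x}$, then $x\not\ge z$; together with $x\not\ge y$ this shows the upper set $U:=P_{\ge z}\cup P_{\ge y}$ satisfies $x\notin U$ and $y\in U$, so $\mathbf{1}_U\in L$, and since $z\le z'$ both $z,z'\in P_{\ge z}\subseteq U$, so $\langle e_{zz'},\mathbf{1}_U\rangle = 1-1 = 0$ and again $h_L(e_{zz'})\ge 0$. As the hypothesis guarantees one of these two cases occurs, we conclude $h_L(e_{zz'})=0$.

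The only mildly delicate point is the case analysis for $h_L$ and the verification that the explicit points indeed satisfy $t_x=0,\ t_y=1$; there is no real obstacle, and the argument is entirely elementary.
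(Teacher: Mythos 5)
Your proof is correct and follows essentially the same approach as the paper's: the upper bound from $z<z'$ in $O_P$, the lower bound for $h_K$ via $0\in K$, and the lower bound for $h_L$ via an explicit $0/1$-witness (the indicator of an upper set containing $y$ but not $x$, with both $z,z'$ inside or both outside). The only cosmetic difference is your choice of witness in the case $z\notin P_{<x}$: you use $\mathbf{1}_{P_{\ge z}\cup P_{\ge y}}$, while the paper uses $\mathbf{1}_{P_{\not\le x}}$; both are valid.
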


\begin{proof}
As $z<z'$, any $t\in O_P$ must satisfy $\langle e_{zz'},t\rangle = 
t_z-t_{z'} \le 0$. Thus
$$
	0\le 
	h_K(e_{zz'}) = \sup_{t\in K}\langle e_{zz'},t\rangle \le 0,
$$
where we used that $0\in K$. On the other hand, define $t\in L$ by 
$t_{z''}=1_{z''\not\le x}$ if $z\not\in P_{<x}$, and by
$t_{z''}=1_{z''\ge y}$ otherwise. Then
$$
	0 = \langle e_{zz'},t\rangle \le
	h_L(e_{zz'}) = \sup_{t'\in L}\langle e_{zz'},t'\rangle \le 0,
$$
concluding the proof.
\end{proof}

We can now exhibit $k$-extreme transition vectors.

\begin{lem}
\label{lem:kxtrans}
Assume $N_k^2=N_{k-1}N_{k+1}>0$.
Let $z,z'\in P\backslash\{x,y\}$ such that $z\lessdot z'$. 
Then the transition vector $e_{zz'}$ is $k$-extreme in each of the 
following situations:
\begin{enumerate}[leftmargin=*,itemsep=\topsep,label=\alph*.]
\item $z,z'\in P_{<x}$, or $z,z'\in P_{>y}$.
\item $z,z'\in P_{>x,\parallel y}$, or $z,z'\in P_{<y,\parallel x}$.
\item $z,z'\in P_{x<\cdot<y}$.
\item $z\in P_{>x,\parallel y}$ and $P_{>z}\subseteq P_{>y}$, or
$z'\in P_{<y,\parallel x}$ and $P_{<z'}\subseteq P_{<x}$.
\item $z\in P_{\parallel x,\parallel y}$ 
and $P_{>z}\subseteq P_{>y}$, or
$z'\in P_{\parallel x,\parallel y}$ and 
$P_{<z'}\subseteq P_{<x}$.
\item $z\in P_{\parallel x,\parallel y}$ and $z'\not\in P_{>y}$, 
or $z'\in P_{\parallel x,\parallel y}$ and $z\not\in P_{<x}$.
\item $z\in P_{x<\cdot<y}$, $z'\in P_{>x,\parallel y}$, and
$|P_{x<\cdot<z'}\cup P_{x<\cdot<y}|\le k-1$.
\item $z\in P_{<y,\parallel x}$, $z'\in P_{x<\cdot<y}$, and
$|P_{z<\cdot<y}\cup P_{x<\cdot<y}|\le k-1$.
\item $z\in P_{<y,\parallel x}$, $z'\in P_{>x,\parallel y}$, and
$|P_{z<\cdot<y}\cup P_{x<\cdot<z'}\cup P_{x<\cdot<y}|\le k-2$.
\end{enumerate}
\end{lem}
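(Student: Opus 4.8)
The plan is to prove this by a single method applied, with varying bookkeeping, to all nine cases: the basic recipe of this section. First I would check, case by case, that in each of (a)--(i) one has $z,z'\in P\backslash\{x,y\}$, $z<z'$, and either $z\notin P_{<x}$ or $z'\notin P_{>y}$; this is immediate once one observes, for instance, that $z>x$ in the first alternatives of (b), (d), (g), that $z\parallel y$ together with $z<z'$ forces $z'\not<y$ (relevant in (b), (f)), and so on. Lemma \ref{lem:htrans} then gives $h_K(e_{zz'})=h_L(e_{zz'})=0$, so that
\[
	F(K,e_{zz'})=K\cap\{t:t_z=t_{z'}\},\qquad
	F(L,e_{zz'})=L\cap\{t:t_z=t_{z'}\},
\]
and $F(K+L,e_{zz'})=F(K,e_{zz'})+F(L,e_{zz'})$. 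All three faces are thus slices of the order polytope by coordinate hyperplanes, and it remains to compute their dimensions.

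For the upper bounds I would read off the coordinate identifications forced inside $O_P$. In $K$, the equality $t_y=t_x$ collapses the block $P_{x\le\cdot\le y}$, and adding $t_z=t_{z'}$ (with $z\lessdot z'$) collapses $\{z,z'\}$; if moreover $z$ or $z'$ lies in $P_{x\le\cdot\le y}$, or $t_z$ becomes forced equal to $t_x=t_y$ (which happens when $z<y$ and $z'>x$), then one or both of the chains $P_{x<\cdot<z'}$ and $P_{z<\cdot<y}$ get dragged into this block, and one must verify that the enlarged block is already order-closed. In $L$, the values $t_x=0$, $t_y=1$ fix the coordinates on $P_{\le x}\cup P_{\ge y}$, and $t_z=t_{z'}$ collapses $\{z,z'\}$ unless one of these coordinates is already fixed; in that situation the side hypotheses ($P_{>z}\subseteq P_{>y}$ in (d), (e); $z'\notin P_{>y}$ in (f); etc.) are precisely what keeps the fixed set from growing past $P_{\ge y}\cup\{z\}$. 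For the matching lower bounds I would, in each case, produce a linear extension $f$ whose simplex $\Delta_f$, intersected with the defining equalities, has nonempty relative interior in the affine hull just described: one starts from Lemma \ref{lem:linxmid} (so the elements between $x$ and $y$ are exactly $P_{x<\cdot<y}$) or from Lemma \ref{lem:linxend} (so prescribed lower/upper sets occupy the two ends, with $x$ and $y$ at their inner edges), and then applies Lemma \ref{lem:linxcover} to make $z$ and $z'$ consecutive.

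Carrying this out, in the generic cases (a), (b), (c), (d), (e), (f) the resulting blocks involve only $P_{x<\cdot<y}$, $P_{<x}$, $P_{>y}$: one expects $\dim F(K,e_{zz'})\in\{n-2-|P_{x<\cdot<y}|,\ n-1-|P_{x<\cdot<y}|\}$, $\dim F(L,e_{zz'})\ge n-3-|P_{<x}|-|P_{>y}|$, and $\dim F(K+L,e_{zz'})=n-2$, so the three inequalities of Definition \ref{defn:kextreme} follow at once from Corollary \ref{cor:eqpos}. In (g), (h), (i) the block of $K$ is the enlarged one, so $\dim F(K,e_{zz'})$ equals $n-1$ minus its size; since that size exceeds by exactly one (in (g), (h)) or two (in (i))---accounting for the one, respectively two, of $z,z'$ absorbed into it---the cardinality appearing in the hypothesis, the requirement $\dim F(K,e_{zz'})\ge n-k-1$ is \emph{exactly} the stated cardinality bound, whereas $\dim F(L,e_{zz'})$ and $\dim F(K+L,e_{zz'})$ are unaffected and are again handled by Corollary \ref{cor:eqpos}.

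The main obstacle is the lower-bound construction of the linear extension in the cases (c), (g), (h), (i), where $z$ or $z'$ meets the interval $[x,y]$: one needs a single extension that simultaneously realizes the betweenness set of $x,y$ as $P_{x<\cdot<y}$, places $z,z'$ consecutively, \emph{and} introduces no coordinate identification beyond the predicted one. Since the operations of Lemmas \ref{lem:linxmid} and \ref{lem:linxcover} then interact, the delicate point will be to verify that the rearrangement performed by Lemma \ref{lem:linxcover} moves only elements already lying inside the relevant block---this is where one uses, as appropriate, that $z\parallel x$, $z'\parallel y$, or $z,z'\in P_{x<\cdot<y}$---together with the companion check that the upper-bound block is genuinely order-closed, so that the two bounds meet. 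I expect this to be combinatorial bookkeeping rather than a new idea, but it is where essentially all the work lies.
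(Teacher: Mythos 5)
Your overall method coincides with the paper's: identify the faces via Lemma \ref{lem:htrans}, bound each affine hull from above by reading off forced identifications in $O_P$, exhibit a matching lower bound by producing a linear extension whose simplex realizes exactly that block, then feed the dimensions into Corollary \ref{cor:eqpos}. Your accounting of block sizes in (a)--(f), and the observation that in (g), (h), (i) the cardinality hypotheses are exactly the requirement $\dim F(K,e_{zz'})\ge n-k-1$, are both correct.

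The gap is in the lower-bound constructions for (g), (h), (i). Your plan there is to apply Lemma \ref{lem:linxmid} and then Lemma \ref{lem:linxcover} to place $z,z'$ consecutively, and to check that the linxcover rearrangement stays inside the predicted block. But in case (g) the linxmid extension $f$ places $z$ strictly between $x$ and $y$ and $z'$ strictly above $y$; since $z<y$, making $z,z'$ consecutive forces $f'(z')<f'(y)$, so $z'$ must cross $y$, and the linxcover permutation acts on the entire range $\{w:f(z)\le f(w)\le f(z')\}$, which contains $y$ together with all $w$ with $f(y)<f(w)<f(z')$, with no control over which of these land strictly between $f'(x)$ and $f'(y)$ afterward. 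The collapsed interval is then $[f'(x),f'(y)]$, which in general strictly contains $P_{x\le\cdot\le y}\cup P_{x\le\cdot\le z'}$, so the lower bound fails; the dual situation occurs in (h), and in (i) both $z$ and $z'$ must cross. In fact the paper does not place $z,z'$ consecutively in these cases at all: instead it constructs a linear extension in which the interval from $f(x)$ to $f(z')$ (resp.\ from $f(z)$ to $f(y)$, resp.\ from $f(z)$ to $f(z')$) is \emph{exactly} the predicted block, via an iterated pushing argument (pick $z_1\notin P_{<z'}$ with $f(y)<f(z_1)<f(z')$ and $f(z_1)$ maximal, move it above $z'$, iterate; then pick $z_2\notin P_{>x}$ with $f(z_2)$ minimal, move it below $x$, iterate). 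This construction is not supplied by Lemmas \ref{lem:linxmid}, \ref{lem:linxend}, \ref{lem:linxcover}, and producing and verifying it is precisely the substantive content your proposal leaves out.
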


\begin{proof}
We begin by noting that in all cases, we have
\begin{align*}
	F(K,e_{zz'}) &= O_P\cap\{ t\in\mathbb{R}^P:
	t_y=t_x,t_z=t_{z'}\}, \\
	F(L,e_{zz'}) &= O_P\cap\{ t\in\mathbb{R}^P:
	t_x=0,t_y=1,t_z=t_{z'}\}
\end{align*}
by Lemma \ref{lem:htrans}. We now consider each case separately.

\medskip

(a) Assume that $z,z'\in P_{<x}$ (the proof for $z,z'\in P_{>y}$ 
is completely analogous). Clearly $F(L,e_{zz'})=L$ and
$\affp F(K,e_{zz'})\subseteq \llangle P_{x\le \cdot\le y},
\{z,z'\}\rrangle$. On the other hand, by Lemmas \ref{lem:linxmid} and
\ref{lem:linxcover}, there is a linear extension $f$ of $P$ so that
the only elements between $x$ and $y$ are $P_{x<\cdot<y}$ and such that 
$z,z'$ are adjacent in the linear order defined by $f$. Applying the basic 
recipe yields $\affp F(K,e_{zz'})=\llangle P_{x\le \cdot\le y},
\{z,z'\}\rrangle$. As $\affp L = \mathbb{R}^{P\backslash(P_{\le x}\cup 
P_{\ge y})}$ by the proof of Lemma \ref{lem:vanish}, we obtain
\begin{align*}
	\dim F(K,e_{zz'}) &= n-2-|P_{x<\cdot<y}|,\\
	\dim F(L,e_{zz'}) &= n-2-|P_{<x}|-|P_{>y}|,\\
	\dim F(K+L,e_{zz'}) &= n-2,
\end{align*}
where we use $\affp F(K+L,e_{zz'}) = 
\affp F(K,e_{zz'}) + \affp F(L,e_{zz'}) =
\llangle \{z,z'\},\{x,y\}\rrangle$.
It follows readily from
Corollary \ref{cor:eqpos} that $e_{zz'}$ is $k$-extreme.

\medskip

(b) Assume $z,z'\in P_{>x,\parallel y}$ (the proof for $z,z'\in 
P_{<y,\parallel x}$ is completely analogous).
Then $\affp F(K,e_{zz'})=\llangle P_{x\le \cdot\le 
y},\{z,z'\}\rrangle$ as in part (a). Moreover, we clearly have
$\affp F(L,e_{zz'}) \subseteq \llangle \{z,z'\}\rrangle\cap 
\mathbb{R}^{P\backslash(P_{\le x}\cup P_{\ge y})}$. By 
Lemmas \ref{lem:linxend}(b) and \ref{lem:linxcover}, there is a linear 
extension $f$ of $P$ in which
the only elements less than $x$ are $P_{<x}$, the only elements greater 
than $y$ are $P_{>y}$, and $z,z'$ are adjacent. Applying the basic
recipe yields $\affp F(L,e_{zz'}) = \llangle \{z,z'\}\rrangle\cap
\mathbb{R}^{P\backslash(P_{\le x}\cup P_{\ge y})}$. We therefore obtain
\begin{align*}
	\dim F(K,e_{zz'}) &= n-2-|P_{x<\cdot<y}|,\\
	\dim F(L,e_{zz'}) &= n-3-|P_{<x}|-|P_{>y}|, \\
	\dim F(K+L,e_{zz'}) &= n-2,
\end{align*}
where we use $\affp F(K+L,e_{zz'}) =
\affp F(K,e_{zz'}) + \affp F(L,e_{zz'}) =
\llangle \{z,z'\},\{x,y\}\rrangle$.
It follows readily from
Corollary \ref{cor:eqpos} that $e_{zz'}$ is $k$-extreme.

\medskip

(c) Clearly $F(K,e_{zz'})=K$, while
$\affp F(L,e_{zz'}) = \llangle \{z,z'\}\rrangle\cap
\mathbb{R}^{P\backslash(P_{\le x}\cup P_{\ge y})}$ as in part (b).
As $\affp K = \llangle P_{x\le\cdot\le y}\rrangle$ by the proof of Lemma 
\ref{lem:vanish}, we obtain
\begin{align*}
	\dim F(K,e_{zz'}) &= n-1-|P_{x<\cdot<y}|,\\
	\dim F(L,e_{zz'}) &= n-3-|P_{<x}|-|P_{>y}|, \\
	\dim F(K+L,e_{zz'}) &= n-2,
\end{align*}
where we use $\affp F(K+L,e_{zz'}) = \affp F(K,e_{zz'}) + \affp 
F(L,e_{zz'}) = \llangle \{z,z'\},\{x,y\}\rrangle$.
It follows readily from
Corollary \ref{cor:eqpos} that $e_{zz'}$ is $k$-extreme.

\medskip

(d) Assume $z\in P_{>x,\parallel y}$ and $P_{>z}\subseteq P_{>y}$ (the 
other case is
completely analogous). Then $\affp F(K,e_{zz'})=\llangle P_{x\le \cdot\le
y},\{z,z'\}\rrangle$ as in part (a). Moreover, we clearly have
$\affp F(L,e_{zz'}) \subseteq \mathbb{R}^{P\backslash(P_{\le x}\cup
P_{\ge y}\cup\{z\})}$. Applying Lemma \ref{lem:linxend}(a) to
$S=P_{\le x}$ and $T=P_{\ge y}\cup\{z\}$ yields
$\affp F(L,e_{zz'}) = \mathbb{R}^{P\backslash(P_{\le x}\cup
P_{\ge y}\cup\{z\})}$
using the basic recipe. Thus
\begin{align*}
	\dim F(K,e_{zz'}) &= n-2-|P_{x<\cdot<y}|,\\
	\dim F(L,e_{zz'}) &= n-3-|P_{<x}|-|P_{>y}|, \\
	\dim F(K+L,e_{zz'}) &= n-2,
\end{align*}
where we use $\affp F(K+L,e_{zz'}) =
\affp F(K,e_{zz'}) + \affp F(L,e_{zz'}) =
\llangle \{z,z'\},\{x,y\}\rrangle$.
It follows readily from
Corollary \ref{cor:eqpos} that $e_{zz'}$ is $k$-extreme.

\medskip

(e) Assume $z\in P_{\parallel x,\parallel y}$ and $P_{>z}\subseteq P_{>y}$ 
(the other case is completely analogous). Then $\affp F(L,e_{zz'}) = 
\mathbb{R}^{P\backslash(P_{\le x}\cup P_{\ge y}\cup\{z\})}$ as in part 
(d). Moreover, we clearly have 
$\affp F(K,e_{zz'})\subseteq \llangle P_{x\le \cdot\le
y},\{z,z'\}\rrangle$. To prove equality we proceed as in part (a), but we 
must be more careful in constructing the linear 
extension $f$.

By Lemma \ref{lem:linxmid}, there is a linear extension $f$ of $P$ in 
which the only elements between $x$ and $y$ are $P_{x<\cdot<y}$. If 
$f(z)<f(x)$, choose $z_1\ge z$, $f(z)\le f(z_1)<f(x)$ with maximal 
$f(z_1)$. We claim that any $f(z_1)<f(z_2)\le f(y)$ must satisfy 
$z_2\parallel z_1$: otherwise $z_2>z_1\ge z$, which contradicts maximality 
of $f(z_1)$ if $f(z_2)<f(x)$ and contradicts $z\in P_{\parallel 
x,\parallel y}$ if $f(x)\le f(z_2)\le f(y)$ (as the latter implies $z\in 
P_{x\le \cdot\le y}$). We can therefore obtain a new linear extension of 
$P$ by moving $z_1$ right above $y$ in the linear order defined by $f$, 
while keeping the remaining ordering fixed. By iterating this process, we 
can always modify the original linear extension $f$ so that $f(z)>f(y)$. 
Applying Lemma \ref{lem:linxcover} to the latter yields a linear extension 
in which the only elements between $x$ and $y$ are $P_{x<\cdot<y}$ and 
$z,z'$ are adjacent. We can now apply the basic recipe to conclude that 
$\affp F(K,e_{zz'})=\llangle P_{x\le \cdot\le y},\{z,z'\}\rrangle$.

The rest of the proof of part (e) is identical to that of part (d).

\medskip

(f) Assume $z\in P_{\parallel x,\parallel y}$ and $z'\not\in P_{>y}$ (the 
other case is completely analogous). Then $\affp F(K,e_{zz'})=\llangle 
P_{x\le \cdot\le y},\{z,z'\}\rrangle$ as in part (e). Moreover, as $z\in 
P_{\parallel x,\parallel y}$ also implies $z'\not\in P_{<x}$, we obtain 
$\affp F(L,e_{zz'})= \llangle \{z,z'\}\rrangle\cap 
\mathbb{R}^{P\backslash(P_{\le x}\cup P_{\ge y})}$ as in part (b).
The rest of the proof of part (f) is identical to that of part (b).

\medskip

(g) We have  $\affp F(L,e_{zz'}) = \llangle \{z,z'\}\rrangle\cap
\mathbb{R}^{P\backslash(P_{\le x}\cup P_{\ge y})}$ as in part (b).
Moreover, we clearly have $\affp F(K,e_{zz'}) \subseteq
\llangle P_{x\le\cdot\le y}\cup P_{x\le\cdot\le z'}\rrangle$.

By Lemma \ref{lem:linxmid}, there is a linear extension $f$ of $P$ in 
which the only elements between $x$ and $y$ are $P_{x<\cdot<y}$. As $z'\in 
P_{>x,\parallel y}$ we must have $f(y)<f(z')$. 

Choose $z_1\not\in P_{<z'}$ with $f(y)<f(z_1)<f(z')$, if it exists, so 
that $f(z_1)$ is maximal. Then any $f(z_1)<f(z_2)\le f(z')$ must satisfy 
$z_2\parallel z_1$: otherwise $z_1<z_2\in P_{\le z'}$ yields a 
contradiction. We can therefore obtain a new linear extension of $P$ by 
moving $z_1$ right above $z'$ while keeping the remaining ordering fixed. 
Iterating this process, we can ensure that $z''\in P_{<z'}$ for all 
$f(y)<f(z'')<f(z')$.

Now choose $z_2\not\in P_{>x}$ with $f(y)<f(z_2)<f(z')$, if it exists, so 
that $f(z_2)$ is minimal. Then any $f(x)\le f(z_1)<f(z_2)$ must satisfy 
$z_1\parallel z_2$: otherwise $x\le z_1<z_2$ or $y=z_1<z_2$, which 
contradict $z_2\not\in P_{>x}$ and $z'\in P_{>x,\parallel y}$ (as we
already ensured above that $z_2\in P_{<z'}$). 
Thus we can move $z_2$ right below $x$ while keeping the 
remaining ordering fixed. Iterating this process, we can always modify the 
original linear extension $f$ so that $z''\in P_{x<\cdot<z'}$ for all 
$f(y)<f(z'')<f(z')$. 
In particular, the resulting linear extension 
satisfies
$$
	\{z''\in P:f(x)\le f(z'')\le f(z')\}=P_{x\le\cdot\le y}\cup 
	P_{x\le\cdot\le z'}.
$$
Applying the basic recipe yields $\affp F(K,e_{zz'}) =
\llangle P_{x\le\cdot\le y}\cup P_{x\le\cdot\le z'}\rrangle$. Thus
\begin{align*}
	\dim F(K,e_{zz'}) &= n-2-|P_{x<\cdot<y}\cup P_{x<\cdot<z'}|,\\
	\dim F(L,e_{zz'}) &= n-3-|P_{<x}|-|P_{>y}|, \\
	\dim F(K+L,e_{zz'}) &= n-2,
\end{align*}
where we use $\affp F(K+L,e_{zz'}) =
\affp F(K,e_{zz'}) + \affp F(L,e_{zz'}) =
\llangle \{z,z'\},\{x,y\}\rrangle$.
It follows from
Corollary \ref{cor:eqpos} and the assumption
that $e_{zz'}$ is $k$-extreme.

\medskip

(h) The proof is completely analogous to that of part (g).

\medskip

(i) We have $\affp F(L,e_{zz'}) = \llangle \{z,z'\}\rrangle\cap 
\mathbb{R}^{P\backslash(P_{\le x}\cup P_{\ge y})}$ as in part (b). Next, 
note that any $t\in F(K,e_{zz'})$ must satisfy $t_z\le t_y=t_x\le 
t_{z'}=t_z$ as $z'>x$ and $z<y$. We therefore clearly have $\affp 
F(K,e_{zz'}) \subseteq \llangle P_{x\le\cdot\le y}\cup P_{x\le\cdot\le 
z'}\cup P_{z\le\cdot\le y}\rrangle$.

To prove equality we reason similarly as in part (g). By Lemma 
\ref{lem:linxmid}, there is a linear extension $f$ of $P$ in which the 
only elements between $x$ and $y$ are $P_{x<\cdot<y}$. As $z'>x$ and 
$z<y$, we must have $f(z)<f(x)<f(y)<f(z')$. By modifying the linear 
extension as in part (g), we can ensure that all $f(y)<f(z'')<f(z')$ 
satisfy $z''\in P_{<z'}$ and all $f(z)<f(z'')<f(x)$ satisfy $z''\in 
P_{>z}$.

Now choose $z_2\not\in P_{>x}$ with $f(y)<f(z_2)<f(z')$, if it exists, so 
that $f(z_2)$ is minimal. Then any $f(z)\le f(z_1)<f(z_2)$ must satisfy 
$z_1\parallel z_2$: otherwise $x\le z_1<z_2$, $y=z_1<z_2$, or $z\le 
z_1<z_2$, which contradict $z_2\not\in P_{>x}$, $z'\in P_{>x,\parallel 
y}$, and $z\lessdot z'$ (as we already ensured that $z_2\in P_{<z'}$).
Thus we can move $z_2$ right below $z$ while keeping the remaining 
ordering fixed. Iterating this process, we can modify the linear 
extension $f$ so that $z''\in P_{x<\cdot<z'}$ for all 
$f(y)<f(z'')<f(z')$.
A completely analogous argument ensures also that
$z''\in P_{z<\cdot<y}$ for all $f(z)<f(z'')<f(x)$.
In particular, the resulting linear extension 
satisfies
$$
	\{z''\in P:f(z)\le f(z'')\le f(z')\}=P_{x\le\cdot\le y}\cup 
	P_{x\le\cdot\le z'}\cup P_{z\le\cdot\le y},
$$
so the basic recipe yields $\affp F(K,e_{zz'}) =
\llangle P_{x\le\cdot\le y}\cup P_{x\le\cdot\le
z'}\cup P_{z\le\cdot\le y}\rrangle$. Thus
\begin{align*}
	\dim F(K,e_{zz'}) &= n-3-|P_{x<\cdot<y}\cup P_{x<\cdot<z'}
	\cup P_{z<\cdot<y}|,\\
	\dim F(L,e_{zz'}) &= n-3-|P_{<x}|-|P_{>y}|, \\
	\dim F(K+L,e_{zz'}) &= n-2,
\end{align*}
where we use $\affp F(K+L,e_{zz'}) =
\affp F(K,e_{zz'}) + \affp F(L,e_{zz'}) =
\llangle \{z,z'\},\{x,y\}\rrangle$.
It follows from
Corollary \ref{cor:eqpos} and the assumption
that $e_{zz'}$ is $k$-extreme.
\end{proof}

\subsubsection{Anchor vectors}

We conclude with an analysis of the anchor vectors.
As above, we first identify the corresponding supporting hyperplanes.

\begin{lem}
\label{lem:hanchor}
The following hold.
\begin{enumerate}[leftmargin=*,itemsep=\topsep,label=\alph*.]
\item If $z\in P_{x<\cdot<y}\cup P_{<y,\parallel x}$,
then $h_K(e_{zxy})=0$ and $h_L(e_{zxy})=\frac{1}{2}$.
\item If $z\in P_{x<\cdot<y}\cup P_{>x,\parallel y}$ then $h_K(e_{xyz})=0$ 
and $h_L(e_{xyz})=\frac{1}{2}$.
\item If $z\in P_{<x}$, then
$h_K(e_{zxy})=0$ and $h_L(e_{zxy})=-\frac{1}{2}$.
\item If $z\in P_{>y}$, then
$h_K(e_{xyz})=0$ and $h_L(e_{xyz})=-\frac{1}{2}$.
\end{enumerate}
\end{lem}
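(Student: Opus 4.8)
The plan is to compute the support functions $h_K$ and $h_L$ of each anchor vector directly from the definitions $K = O_P \cap \{t_y = t_x\}$ and $L = O_P \cap \{t_x = 0, t_y = 1\}$, exactly as was done for coordinate vectors in Lemma~\ref{lem:hcoord} and transition vectors in Lemma~\ref{lem:htrans}. For each anchor vector $u$, the recipe is the same: establish an upper bound $h_C(u) \le c$ by exhibiting an affine inequality that every point of $C$ satisfies (read off from the poset relations defining $O_P$ together with the slicing constraints defining $K$ or $L$), and then establish the matching lower bound $h_C(u) \ge c$ by exhibiting an explicit point $t \in C$ achieving $\langle u, t\rangle = c$.

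First I would treat part~(a), with $u = e_{zxy} = e_z - \tfrac{e_x+e_y}{2}$ and $z \in P_{x<\cdot<y} \cup P_{<y,\parallel x}$. For $h_K$: every $t \in K$ has $t_x = t_y$, and the hypothesis on $z$ forces $t_z \le t_y$ (either $z < y$ directly, or $z = y$ is excluded since $z \ne x,y$; in the case $z \in P_{x<\cdot<y}$ we even have $t_z = t_x = t_y$), so $\langle e_{zxy}, t\rangle = t_z - t_x \le 0$, and equality is attained at $t = 0 \in K$, giving $h_K(e_{zxy}) = 0$. For $h_L$: every $t \in L$ has $t_x = 0$, $t_y = 1$, so $\langle e_{zxy}, t\rangle = t_z - \tfrac12 \le \tfrac12$ since $t_z \le 1$; to attain equality I need a point of $L$ with $t_z = 1$, which I take to be the indicator $t_{z'} = 1_{z' \in P_{\ge y} \cup \{z\}}$ — this lies in $O_P$ because $z$ being incomparable to $x$ (or below $y$) means no relation is violated, and it lies on the slice $t_x = 0$, $t_y = 1$ since $z \ne x$ and $y \in P_{\ge y}$. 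Hence $h_L(e_{zxy}) = \tfrac12$. Parts~(b)--(d) are entirely analogous: in~(b) one reflects $x \leftrightarrow y$ and uses the dual indicator $t_{z'} = 1_{z' \in P_{\ge y}}$ adjusted to include/exclude $z$; in~(c), when $z \in P_{<x}$, every $t \in L$ has $t_z = t_x = 0$ so $\langle e_{zxy}, t\rangle = 0 - \tfrac12 = -\tfrac12$, attained everywhere on $L$; in~(d), when $z \in P_{>y}$, every $t \in L$ has $t_z = t_y = 1$ so $\langle e_{xyz}, t\rangle = \tfrac12 - 1 = -\tfrac12$.

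There is no serious obstacle here — this lemma is bookkeeping. The only point requiring a moment of care is verifying, in the lower-bound construction for $h_L$ in parts~(a) and~(b), that the chosen $0/1$ vector genuinely lies in $O_P$, i.e.\ that setting $t_z = 1$ for an element $z$ incomparable to $x$ (and possibly below $y$) together with $t_{z'} = 1$ exactly on $P_{\ge y}$ respects all the order relations: one must check that no $z'' < z$ is assigned $0$ and no $z'' > z$ is assigned $1$ incorrectly, which follows because $z \parallel x$ (or $z < y$) means $P_{<z} \subseteq P_{\not\ge y, \ne x}$ need not all be $1$, but the relevant check is only that the assignment is monotone — and since the set $P_{\ge y} \cup \{z\}$ is an up-set once we note $P_{>z} \subseteq P_{\ge y}$ is \emph{not} needed (we only need it to be consistent, and $z$ being a single added element below $y$ or incomparable to $x$ suffices). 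I would write this verification out carefully in each of the four cases but expect each to be one or two lines.
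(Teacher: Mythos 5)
There is a genuine gap in the lower-bound construction for $h_L$ in parts~(a) and~(b). You take the witness to be the indicator of $P_{\ge y}\cup\{z\}$, but this set is not an up-set in general, so its indicator vector need not lie in $O_P$ (hence need not lie in $L$). Concretely, if $z\in P_{<y,\parallel x}$ and $z<w$ for some $w$ with $w\not\ge y$ (e.g.\ $z\lessdot w\lessdot y$ with $w\parallel x$), then your vector has $t_z=1$ and $t_w=0$ with $z<w$, violating the defining inequalities of $O_P$; the same failure occurs for $z\in P_{x<\cdot<y}$ with $z\lessdot w\in P_{\parallel y}$. Your closing paragraph explicitly claims that ``$P_{>z}\subseteq P_{\ge y}$ is not needed,'' which is exactly the opposite of the truth: without a condition of this kind, $P_{\ge y}\cup\{z\}$ is not closed upward and the vector is not in $L$.

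The fix is easy and is what the paper does: take the indicator of the up-set $P_{\ge y}\cup P_{\ge z}$ rather than $P_{\ge y}\cup\{z\}$. This is automatically monotone, so it lies in $O_P$; one then only checks $t_x=0$, i.e.\ $x\notin P_{\ge y}\cup P_{\ge z}$, which holds since $x\not\ge y$ and since $z\in P_{x<\cdot<y}\cup P_{<y,\parallel x}$ implies $z\not\le x$. With that replacement, and the analogous dual indicator $1_{z'\in P_{\le x}\cup P_{\le z}}$ for part~(b), your argument goes through. The rest of the proposal (the upper bounds for $h_K$ and $h_L$, the trivial parts~(c) and~(d)) is correct and matches the paper's reasoning; the paper organizes the $L$-computation slightly differently via the identity $\langle\frac{e_x+e_y}{2},t\rangle=\frac12$ on $L$, reducing to $h_L(\pm e_z)$, but that is a cosmetic difference.
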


\begin{proof}
In all cases, the proof that $h_K(e_{zxy})=0$ or $h_K(e_{xyz})=0$ 
is the same as in Lemma \ref{lem:htrans}. On the other hand, note that
any $t\in L$ satisfies $\langle \frac{e_x+e_y}{2},t\rangle=\frac{1}{2}$,
so
$$
	h_L(e_{zxy}) = h_L(e_z)-\tfrac{1}{2},\qquad
	h_L(e_{xyz}) = h_L(-e_z)+\tfrac{1}{2}.
$$
For (a), define $t\in\mathbb{R}^P$ by $t_z=1_{z\in P_{\ge y}\cup
P_{\ge z}}$. As $z\not\le x$, we have $t\in L\subset [0,1]^P$, so
$$
	1 = \langle e_z,t\rangle \le h_L(e_z) =
	\sup_{t'\in L}\langle e_z,t'\rangle \le 1.
$$
Consequently, we have $h_L(e_{zxy})=\frac{1}{2}$. The proof of part (b) is 
completely analogous. For part (c), it suffices to note that
$t_z=0$ for all $t\in L$, so that $h_L(e_z)=0$ and thus
$h_L(e_{zxy})=-\frac{1}{2}$. The proof of part (d) is completely 
analogous.
\end{proof}

We can now exhibit $k$-extreme anchor vectors.

\begin{lem}
\label{lem:kxanchor}
Assume $N_k^2=N_{k-1}N_{k+1}>0$.
\begin{enumerate}[leftmargin=*,itemsep=\topsep,label=\alph*.]
\item 
If $z\in P_{x<\cdot<y}\cup P_{<y,\parallel x}$ and $z\lessdot y$, then
$e_{zxy}$ is $k$-extreme if $|P_{>z}|+|P_{<x}|\le n-k$.
\item 
If $z\in P_{x<\cdot<y}\cup P_{>x,\parallel y}$ and $x\lessdot z$, then
$e_{xyz}$ is $k$-extreme if $|P_{<z}|+|P_{>y}|\le n-k$.
\item
If $z\lessdot x$, then
$e_{zxy}$ is $k$-extreme if $|P_{z<\cdot<y}\cup\{x\}|\le k$.
\item
If $y\lessdot z$, then
$e_{xyz}$ is $k$-extreme if $|P_{x<\cdot<z}\cup\{y\}|\le k$.
\end{enumerate}
\end{lem}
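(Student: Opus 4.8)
The plan is to follow the now-established template: for each anchor vector we must compute $\dim F(K,u)$, $\dim F(L,u)$, and $\dim F(K+L,u)$ using the basic recipe, and then check the three dimension inequalities in Definition \ref{defn:kextreme} using Corollary \ref{cor:eqpos} together with the extra numerical hypotheses stated in the lemma. The supporting hyperplanes have already been identified in Lemma \ref{lem:hanchor}, so in each case we know that $F(K,e_{zxy}) = O_P \cap \{t : t_y = t_x, \, t_z = \tfrac{t_x+t_y}{2} = t_x\}$ and $F(L,e_{zxy}) = O_P \cap \{t : t_x = 0,\, t_y = 1,\, t_z = \tfrac12\}$ (and symmetrically for $e_{xyz}$, and with the appropriate sign adjustments in cases (c),(d) where $h_L$ is $-\tfrac12$, giving $t_z = 0$ or $t_z = 1$). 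I would treat the four parts in two symmetric pairs, (a)/(b) and (c)/(d), presenting one of each pair in detail and remarking that the other is completely analogous, exactly as in Lemmas \ref{lem:kxcoord} and \ref{lem:kxtrans}.

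For part (a), the key structural observation is that on $F(K,e_{zxy})$ the constraint $t_z = t_x = t_y$ together with $z \lessdot y$ forces $t_{z'} = t_x$ for all $z' \in P_{z<\cdot<y} \cup P_{x<\cdot<y}$, so $\affp F(K,e_{zxy}) \subseteq \llangle P_{z \le \cdot \le y} \cup P_{x \le \cdot \le y}\rrangle$; the matching lower bound follows by building, via Lemmas \ref{lem:linxmid} and \ref{lem:linxcover} (and, when $z \parallel x$, the moving argument from Lemma \ref{lem:kxtrans}(e) to push $z$ past $x$), a linear extension in which the only elements in the closed interval from $z$ to $y$ are exactly $P_{z<\cdot<y} \cup P_{x<\cdot<y}$ and $z \lessdot y$ are adjacent. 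For $F(L,e_{zxy})$, the constraint $t_z = \tfrac12$ is incompatible with $t$ being a $0/1$ vector in $[0,1]^P$ but forces nothing further on the other coordinates beyond what $L$ already imposes, so $\affp F(L,e_{zxy}) = \{t_z = \tfrac12\} \cap \mathbb{R}^{P \setminus (P_{\le x} \cup P_{\ge y})}$, giving $\dim F(L,e_{zxy}) = n-3-|P_{<x}|-|P_{>y}|$; here one uses Lemma \ref{lem:linxend}(b) applied to $P \setminus \{z\}$ together with an interior point having $t_z = \tfrac12$. Summing, $\dim F(K+L,e_{zxy}) = n-2$. One then checks $\dim F(K,e_{zxy}) \ge n-k-1$, i.e.\ $|P_{z<\cdot<y} \cup P_{x<\cdot<y}| \le k-2$: this is where the hypothesis $|P_{>z}|+|P_{<x}| \le n-k$ enters, combined with Corollary \ref{cor:eqpos}; the remaining two inequalities follow from Corollary \ref{cor:eqpos} alone.

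For part (c), with $z \lessdot x$, on $F(K,e_{zxy})$ the constraint $t_z = t_x = t_y$ and $z \lessdot x$ force $t_{z'} = t_x$ for $z' \in P_{z<\cdot<x} \cup P_{x<\cdot<y}$ (note $P_{z < \cdot < x} \cup \{x\} \cup P_{x < \cdot < y} = P_{z < \cdot < y}$ when $z < x < y$, but the formulation covers $x \parallel y$ too), so $\affp F(K,e_{zxy}) \subseteq \llangle P_{z \le \cdot \le x} \cup P_{x \le \cdot \le y}\rrangle$; the reverse inclusion again comes from Lemmas \ref{lem:linxmid}, \ref{lem:linxcover}. Since $h_L(e_{zxy}) = -\tfrac12$ here, $F(L,e_{zxy}) = O_P \cap \{t_x = 0, t_y = 1, t_z = 0\}$, which is just a face of $L$ obtained by adjoining $z$ to the lower set; using Lemma \ref{lem:linxend}(a) with $S = P_{\le x}$ (noting $z \in P_{\le x}$ already, so in fact $F(L,e_{zxy}) = L$ when $z < x$) one gets $\dim F(L,e_{zxy}) = \dim L = n-2-|P_{<x}|-|P_{>y}|$, and $\dim F(K+L,e_{zxy}) = n-1$. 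The relevant check is now $\dim F(K,e_{zxy}) \ge n-k-1$, which unravels to $|P_{z<\cdot<y} \cup \{x\}| \le k$ (after accounting for $P_{x<\cdot<y}$ being contained in $P_{z<\cdot<y}$), exactly the stated hypothesis; the inequalities for $L$ and $K+L$ are immediate from Corollary \ref{cor:eqpos}.

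I expect the main obstacle to be the bookkeeping in the $K$-face dimension computations, specifically verifying that the interval-collapsing argument produces precisely the claimed spanning set $\llangle \cdots \rrangle$ and no smaller one --- this requires constructing minimal linear extensions carefully, and in the cases where $z \parallel x$ (in (a)) or where one must be careful about whether $x < y$ or $x \parallel y$, one must adapt the ``move an element past $x$ (or $y$)'' trick from the proof of Lemma \ref{lem:kxtrans}(e), checking at each step that the relevant incomparabilities hold. A secondary subtlety is translating the face-dimension inequality $\dim F(K,\cdot) \ge n-k-1$ into the clean numerical hypothesis stated in the lemma: one must correctly handle the union $P_{z<\cdot<y} \cup P_{x<\cdot<y}$ (or the three-fold union in the interval cases) and use $|P_{>z}| \ge |P_{z<\cdot<y}| + |P_{\ge y}|$-type inequalities together with Corollary \ref{cor:eqpos}. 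Everything else is routine once the affine hulls are pinned down.
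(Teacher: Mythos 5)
Your computation of the face $F(L,e_{zxy})$ in part (a) is wrong, and this error propagates to the core of the argument. The face is the set of points of $L$ on which $\langle e_{zxy},\cdot\rangle$ achieves its maximum. For $t\in L$ we have $t_x=0$, $t_y=1$, so $\langle e_{zxy},t\rangle = t_z-\tfrac12$; since $h_L(e_{zxy})=\tfrac12$, the face is $\{t\in L: t_z=1\}$, \emph{not} $\{t\in L: t_z=\tfrac12\}$ as you wrote. (You appear to have applied the rule valid for $K$, where $h_K(e_{zxy})=0$ indeed gives $t_z=t_{xy}$, but for $L$ the nonzero value $\tfrac12$ shifts the condition.) Consequently the constraint $t_z=1$ together with the order relations $t_{z'}\ge t_z$ for $z'>z$ forces $t_{z'}=1$ for all $z'\ge z$, so $\affp F(L,e_{zxy}) = \mathbb{R}^{P\setminus(P_{\le x}\cup P_{\ge z})}$, giving
\[
\dim F(L,e_{zxy}) = n-2-|P_{<x}|-|P_{>z}|,
\]
which depends on $z$ and is generically strictly smaller than your claimed $n-3-|P_{<x}|-|P_{>y}|$. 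This is exactly where the hypothesis $|P_{>z}|+|P_{<x}|\le n-k$ is needed: it yields $\dim F(L,e_{zxy})\ge k-2$. In contrast, the $K$-face check needs no extra hypothesis at all: since $z\lessdot y$, $P_{z<\cdot<y}=\varnothing$, so $|P_{z<\cdot<y}\cup P_{x<\cdot<y}| = |P_{x<\cdot<y}|\le k-2$ by Corollary~\ref{cor:eqpos} alone, contrary to your claim that the numerical hypothesis enters there. As written, your proof would ``establish'' the lemma without ever genuinely invoking the hypothesis, but the hypothesis is essential: if it fails, the true $\dim F(L,e_{zxy})$ drops below $k-2$ and the vector is not $k$-extreme.

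There are also two smaller slips in your sketch of part (c): the upper bound for the $K$-face should be $\llangle P_{z\le\cdot\le y}\cup P_{x\le\cdot\le y}\rrangle$ rather than $\llangle P_{z\le\cdot\le x}\cup P_{x\le\cdot\le y}\rrangle$, because $t_z=t_y$ on the face forces $t_w=t_z$ for every $w$ with $z<w<y$ (not merely $z<w<x$); your final dimension check happens to match the correct space, but does not follow from the set you actually wrote down. And $\dim F(K+L,e_{zxy}) = n-2$ (its affine hull is $\llangle\{x,y,z\}\rrangle$), not $n-1$ as you claim; since $e_{zxy}$ is not orthogonal to $\affp(K+L)=\llangle\{x,y\}\rrangle$, the face cannot be all of $K+L$. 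This last point does not affect the conclusion ($n-2\ge n-3$ still holds), but combined with the $L$-face error in (a) it indicates that the support-function and face computations need to be redone carefully before the dimension bookkeeping can be trusted.
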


\begin{proof}
We first prove (a) (the proof of part (b) is completely analogous).
We have
\begin{align*}
	F(K,e_{zxy}) &= O_P \cap \{t\in\mathbb{R}^P:
	t_y=t_z=t_x\},\\
	F(L,e_{zxy}) &= O_P \cap \{t_x=0,t_y=t_z=1\}
\end{align*}
by Lemma \ref{lem:hanchor}. 
As $z<y$, clearly $\affp F(L,e_{zxy})
\subseteq \mathbb{R}^{P\backslash(P_{\le x}\cup P_{\ge z})}$.
Applying the basic recipe with the linear extension of
Lemma \ref{lem:linxend}(a) with $S=P_{\le x}$ and
$T=P_{\ge z}$ yields
$\affp F(L,e_{zxy}) =\mathbb{R}^{P\backslash(P_{\le x}\cup P_{\ge z})}$.
Next, we clearly have $F(K,e_{zxy})=K$ if $z\in P_{x<\cdot<y}$.
If $z\in P_{<y,\parallel x}$, we obtain
$\affp F(K,e_{zxy})=\llangle P_{x\le\cdot\le y}\cup \{z\}\rrangle$
as in the proof of Lemma \ref{lem:kxtrans}(h) (note that 
$P_{z<\cdot<y}=\varnothing$ as $z\lessdot y$). In either case
\begin{align*}
	\dim F(K,e_{zxy}) &\ge n-2-|P_{x<\cdot<y}|,\\
	\dim F(L,e_{zxy}) &= n-2-|P_{<x}|-|P_{>z}|, \\
	\dim F(K+L,e_{zxy}) &= n-2,
\end{align*}
where we use $\affp F(K+L,e_{zxy}) =
\affp F(K,e_{zxy}) + \affp F(L,e_{zxy}) =
\llangle \{x,y,z\}\rrangle$.
It follows from
Corollary \ref{cor:eqpos} and the assumption
that $e_{zxy}$ is $k$-extreme.

\medskip

We now prove (d) (the proof of part (c) is completely analogous).
We have
\begin{align*}
	F(K,e_{xyz}) &= O_P \cap \{t\in\mathbb{R}^P:
	t_y=t_z=t_x\},\\
	F(L,e_{xyz}) &= O_P \cap \{t_x=0,t_y=t_z=1\} = L
\end{align*}
by Lemma \ref{lem:hanchor}. Clearly 
$\affp F(K,e_{xyz})\subseteq \llangle
P_{x\le\cdot\le y}\cup P_{x\le \cdot\le z}\rrangle$.

To prove equality we reason similarly as for Lemma \ref{lem:kxtrans}(g). 
By Lemma \ref{lem:linxmid}, there is a linear extension $f$ of $P$ in 
which the only elements between $x$ and $y$ are $P_{x<\cdot<y}$. As $y<z$ 
we must have $f(y)<f(z)$. By modifying $f$ as in the proof of Lemma 
\ref{lem:kxtrans}(g), we can ensure that all $f(y)<f(z')<f(z)$ satisfy 
$z'\in P_{<z}$.

Now choose $z_2\not\in P_{>x}$ with $f(y)<f(z_2)<f(z)$, if it exists, so 
that $f(z_2)$ is minimal. Then any $f(x)\le f(z_1)<f(z_2)$ must satisfy 
$z_1\parallel z_2$: otherwise $x\le z_1<z_2$ or $y=z_1<z_2$, which 
contradict $z_2\not\in P_{>x}$ and $y\lessdot z$ (as we already ensured 
above that $z_2\in P_{<z}$). We can now repeat the rest of the argument in 
the proof of Lemma \ref{lem:kxtrans}(g) verbatim to conclude that $\affp 
F(K,e_{xyz})=\llangle P_{x\le\cdot\le y}\cup P_{x\le \cdot\le z}\rrangle$.
Noting that
$P_{x\le\cdot\le y}\cup P_{x\le \cdot\le z}=\{x,y,z\}\cup P_{x<\cdot<z}$,
we obtain
\begin{align*}
	\dim F(K,e_{xyz}) &= n-1-|P_{x<\cdot<z}\cup\{y\}|,\\
	\dim F(L,e_{xyz}) &= n-2-|P_{<x}|-|P_{>y}|, \\
	\dim F(K+L,e_{xyz}) &= n-2,
\end{align*}
where we use $\affp F(K+L,e_{xyz}) =
\affp F(K,e_{xyz}) + \affp F(L,e_{xyz}) =
\llangle \{x,y,z\}\rrangle$.
It follows from
Corollary \ref{cor:eqpos} and the assumption
that $e_{xyz}$ is $k$-extreme.
\end{proof}

\section{Proof of the main results}
\label{sec:proof}

Now that we have described $k$-extreme vectors in combinatorial terms, 
we aim to combine this information with the equality characterization 
provided by Proposition~\ref{prop:geom} to prove the main results of this 
paper. More precisely, our analysis will be based on the following 
consequences of Proposition \ref{prop:geom}. Here and in the following, we 
will use the notation $v_{xy} := \frac{v_x+v_y}{2}$ for $v\in V$.

\begin{lem}
\label{lem:eqextr}
Assume that $N_k>0$ and $a^2N_{k+1}=aN_k=N_{k-1}$. Then there exists
$v\in V$ so that the following hold for $z,z'\in P\backslash\{x,y\}$.
\begin{enumerate}[leftmargin=*,itemsep=\topsep,label=\alph*.]
\item If $z$ is a maximal element of $P$, then $v_z=1-a$.
\item If $z$ is a minimal element of $P$, then $v_z=0$.
\item If $e_{zz'}$ is $k$-extreme for $z<z'$ with
$z\not\in P_{<x}$ or $z'\not\in P_{>y}$, then $v_z=v_{z'}$.
\item If $e_{zxy}$ is $k$-extreme for $z\in P_{x<\cdot<y}\cup 
P_{<y,\parallel x}$, then $v_z = v_{xy}-\frac{a}{2}$.
\item If $e_{xyz}$ is $k$-extreme for $z\in P_{x<\cdot<y}\cup 
P_{>x,\parallel y}$, then $v_z = v_{xy}+\frac{a}{2}$.
\item If $e_{zxy}$ is $k$-extreme for $z\in P_{<x}$, then
$v_z = v_{xy}+\frac{a}{2}$.
\item If $e_{xyz}$ is $k$-extreme for
$z\in P_{>y}$, then $v_z=v_{xy}-\frac{a}{2}$.
\end{enumerate}
\end{lem}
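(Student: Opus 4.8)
The plan is to extract all seven assertions from a single geometric identity. First I would observe that the hypotheses $N_k>0$ and $a^2N_{k+1}=aN_k=N_{k-1}$ are precisely condition (a) of Proposition \ref{prop:geom}, and that they moreover force $N_{k-1},N_k,N_{k+1}>0$ and $N_k^2=N_{k-1}N_{k+1}$ (indeed $N_{k-1}=aN_k>0$ and $N_{k+1}=N_{k-1}/a^2>0$, so $N_{k-1}N_{k+1}=N_{k-1}^2/a^2=N_k^2>0$). Proposition \ref{prop:geom} then supplies a vector $v\in V$ with $h_K(u)=h_{aL+v}(u)$ for every $k$-extreme $u\in V$, which by the standard identity $h_{aL+v}(u)=a\,h_L(u)+\langle v,u\rangle$ becomes
$$
	h_K(u)=a\,h_L(u)+\langle v,u\rangle\qquad\text{for every $k$-extreme }u\in V.
$$
The rest of the proof would evaluate this identity at the specific $k$-extreme vectors catalogued in Section \ref{sec:face} and solve for $v_z$ in each case.

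For parts (a) and (b): when $z\in P\backslash\{x,y\}$ is a maximal (resp.\ minimal) element of $P$, Lemma \ref{lem:kxcoord} shows that $e_z$ (resp.\ $-e_z$) is $k$-extreme, and Lemma \ref{lem:hcoord} gives $h_K(e_z)=h_L(e_z)=1$ (resp.\ $h_K(-e_z)=h_L(-e_z)=0$). Substituting into the displayed identity with $\langle v,\pm e_z\rangle=\pm v_z$ would yield $v_z=1-a$ (resp.\ $v_z=0$).

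For parts (c)--(g): here $k$-extremeness of the relevant vector is part of the hypothesis, so only the support-function values are needed. In part (c), Lemma \ref{lem:htrans} applies under exactly the stated conditions and gives $h_K(e_{zz'})=h_L(e_{zz'})=0$; since $\langle v,e_{zz'}\rangle=v_z-v_{z'}$, the identity forces $v_z=v_{z'}$. For parts (d)--(g) I would use $\langle v,e_{zxy}\rangle=v_z-v_{xy}$ and $\langle v,e_{xyz}\rangle=v_{xy}-v_z$ together with Lemma \ref{lem:hanchor}, in each of whose four cases $h_K=0$ while $h_L=\tfrac12$ or $h_L=-\tfrac12$; feeding these values into the displayed identity produces, for $z\in P_{x<\cdot<y}\cup P_{<y,\parallel x}$, $z\in P_{x<\cdot<y}\cup P_{>x,\parallel y}$, $z\in P_{<x}$, $z\in P_{>y}$ respectively, the values $v_z=v_{xy}-\tfrac a2$, $v_z=v_{xy}+\tfrac a2$, $v_z=v_{xy}+\tfrac a2$, $v_z=v_{xy}-\tfrac a2$, which are precisely (d)--(g).

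There is essentially no obstacle: the content is entirely in Proposition \ref{prop:geom} and the computations of Section \ref{sec:face}, and what remains is bookkeeping. The only points requiring care are that a single $v$ must serve all of (a)--(g)---which is automatic, as every assertion is read off the one displayed identity---and that the constant $a$ furnished by Proposition \ref{prop:geom} coincides with the $a$ of the hypothesis (this is where condition (a) of Proposition \ref{prop:geom} pinning down $a$ is used, and where $N_k^2=N_{k-1}N_{k+1}>0$ is needed in order to cite Lemma \ref{lem:kxcoord} in parts (a) and (b)).
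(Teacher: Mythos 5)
Your proof is correct and follows the same approach as the paper: invoke Proposition \ref{prop:geom} to obtain the single identity $h_K(u)=a\,h_L(u)+\langle u,v\rangle$ on $k$-extreme vectors, then read off each part by substituting the support-function values from Lemmas \ref{lem:hcoord}, \ref{lem:htrans}, \ref{lem:hanchor} (with Lemma \ref{lem:kxcoord} supplying $k$-extremeness for parts (a) and (b)). You simply spell out the arithmetic that the paper leaves implicit, and your preliminary observation that the hypothesis forces $N_k^2=N_{k-1}N_{k+1}>0$ (needed to invoke Lemma \ref{lem:kxcoord}) is a useful bit of care that the paper's one-line proof does not make explicit.
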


\begin{proof}
Proposition \ref{prop:geom} states that there exists $v\in V$ so that 
$h_K(u)=ah_L(u)+\langle u,v\rangle$ for every $k$-extreme vector $u$.
Parts (a) and (b) follow from Lemmas \ref{lem:hcoord} and 
\ref{lem:kxcoord}, (c) follows from Lemma \ref{lem:htrans}, and
the remaining parts follow from Lemma \ref{lem:hanchor}.
\end{proof}

Lemma \ref{lem:eqextr} shows that each $k$-extreme direction yields a 
linear constraint on the vector $v$. The basic principle behind our 
results is that if there are too many $k$-extreme vectors, the resulting 
system of linear equations for $v$ has no solution. The latter entails a 
contradiction, as the existence of $v$ is guaranteed by 
Proposition~\ref{prop:geom}. This will enable us to reason that some 
vectors must not be $k$-extreme, which gives rise to the explicit 
combinatorial conditions in our main results (as in 
Definition~\ref{defn:conditions}) using the description of $k$-extreme 
directions in the previous section. How to reason about the existence of 
solutions is far from obvious, however, and will require a careful 
analysis of the structure of the underlying poset.

The fact that many transition vectors, viz.\ those that appear
in parts (a)--(f) of Lemma \ref{lem:kxtrans}, are always $k$-extreme will 
enable us to fix many entries of $v$ at the outset of the analysis. It 
will turn out that the nontrivial structure of the extremals is largely 
(but not entirely) controlled by the value of $v_{xy}$. In particular, let 
us define four properties that will play a central role in the analysis; 
the significance of these properties will become evident in the proofs.

\begin{defn}
\label{defn:condpf}
We define the following properties:
\begin{enumerate}[labelwidth=\widthof{$(\mathscr{M}^*)$},itemsep=\topsep,align=center]
\item[$(\mathscr{M})$] $a\ne \frac{2}{3}(1-v_{xy})$.
\item[$(\mathscr{M}^*)$] $a\ne 2v_{xy}$.
\item[$(\mathscr{E})$] $a\ne -2v_{xy}$.
\item[$(\mathscr{E}^*)$] $a\ne 2(1-v_{xy})$.
\end{enumerate}
\end{defn}

We now briefly outline the steps in the proof of our main results. We 
first show in section \ref{sec:pfe} that $(\mathscr{E}){\Rightarrow}$\Ek\ 
and $(\mathscr{E}^*){\Rightarrow}$\Ekd. In section \ref{sec:pfm}, we 
show that $(\mathscr{M}){\Rightarrow}$\Mk\ and 
$(\mathscr{M}^*){\Rightarrow}$\Mkd. In section \ref{sec:pff} we argue that 
if $a\ne\frac{1}{2}$, then either $(\mathscr{M})$ and $(\mathscr{E})$, or 
$(\mathscr{M}^*)$ and $(\mathscr{E}^*)$, must hold. This simultaneously 
proves both Theorems \ref{thm:maingeom} and \ref{thm:mainflat} (because 
the 
implication (c)$\Rightarrow$(a) of Theorem \ref{thm:mainflat}, which was 
proved in section \ref{sec:easy}, then shows that $a\ne\frac{1}{2}$ 
implies $a=1$). Finally, we prove Theorem \ref{thm:maindouble} in section 
\ref{sec:pfd} using additional arguments specific to this case.

\subsection{The conditions $(\mathscr{E}),(\mathscr{E}^*)$}
\label{sec:pfe}

The aim of this section is to understand the origin of conditions \Ek\ and 
\Ekd\ in Definition \ref{defn:conditions}, which are concerned with 
elements $z\in P_{<x}$ or $P_{>y}$. We begin by computing $v_z$ for these 
elements.

\begin{lem}
\label{lem:vend}
Assume that $N_k>0$ and $a^2N_{k+1}=aN_k=N_{k-1}$, and let $v\in V$ be 
the vector provided by Lemma \ref{lem:eqextr}. Then the following hold.
\begin{enumerate}[leftmargin=*,itemsep=\topsep,label=\alph*.]
\item $v_z=0$ for every $z\in P_{<x}$.
\item $v_z=1-a$ for every $z\in P_{>y}$.
\end{enumerate}
\end{lem}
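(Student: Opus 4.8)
The plan is to prove parts (a) and (b) of Lemma \ref{lem:vend} by induction on the rank of the element $z$ within the lower set $P_{\le x}$ (respectively, the upper set $P_{\ge y}$), using the transition vectors of Lemma \ref{lem:kxtrans}(a) to propagate the value of $v$ along cover relations until we reach a minimal (resp.\ maximal) element, where Lemma \ref{lem:eqextr}(b) (resp.\ (a)) pins down the value. By symmetry (passing to the dual poset $P^*$ as in the remark preceding Example 1.9, which swaps $x\leftrightarrow y$ and sends $a\mapsto 1/a$ after rescaling) it suffices to prove part (a).

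First I would fix the vector $v\in V$ provided by Lemma \ref{lem:eqextr} and let $z\in P_{<x}$ be arbitrary. Since $P_{<x}$ is a lower set, there is a saturated chain $z_0\lessdot z_1\lessdot\cdots\lessdot z_m=z$ inside $P_{<x}$ with $z_0$ a minimal element of $P$ (take any minimal element of $P$ below $z$; every element in a saturated chain from it up to $z$ lies in $P_{<x}$ because $z<x$). For each $i$, the pair $z_{i-1}\lessdot z_i$ lies in $P_{<x}$, so Lemma \ref{lem:kxtrans}(a) applies and shows $e_{z_{i-1}z_i}$ is $k$-extreme; since $z_{i-1},z_i\notin\{x,y\}$ and $z_{i-1}<z_i$ with $z_{i-1}\notin P_{<x}$ failing only vacuously (in fact both are in $P_{<x}$, but the hypothesis of Lemma \ref{lem:eqextr}(c) merely requires \emph{not} ($z\in P_{<x}$ and $z'\in P_{>y}$), and $z_i\notin P_{>y}$ since $z_i<x\not\ge y$), Lemma \ref{lem:eqextr}(c) gives $v_{z_{i-1}}=v_{z_i}$. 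Chaining these equalities yields $v_z=v_{z_0}$, and Lemma \ref{lem:eqextr}(b) gives $v_{z_0}=0$ since $z_0$ is minimal in $P$. This proves (a).

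The main point requiring care is the observation that $z_i\not\ge y$ for every element $z_i$ on the chain, so that the exclusion clause in Lemma \ref{lem:eqextr}(c) is harmless: this is immediate here since $z_i<x$ and $x\not\ge y$ force $z_i\not\ge y$. A second small subtlety is that one must genuinely descend to a \emph{globally} minimal element $z_0$ of $P$ (not merely minimal within $P_{<x}$) in order to invoke Lemma \ref{lem:eqextr}(b); but any saturated chain downward from $z$ inside $P$ stays inside $P_{<x}$ until it terminates at a global minimal element, precisely because $P_{<x}$ is a lower set. For part (b), the dual argument uses Lemma \ref{lem:kxtrans}(a) (the case $z,z'\in P_{>y}$), Lemma \ref{lem:eqextr}(c) (whose exclusion clause is again vacuous since any $z_i>y$ satisfies $z_i\not\in P_{<x}$), and Lemma \ref{lem:eqextr}(a) applied at a global maximal element, giving $v_z=1-a$. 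I expect no real obstacle: this lemma is a routine propagation argument, and the only thing to double-check is the bookkeeping with the hypotheses of Lemma \ref{lem:eqextr}(c).
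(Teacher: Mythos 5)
Your proof is correct and follows the paper's approach exactly: descend from $z$ along a saturated chain inside $P_{<x}$ to a globally minimal element of $P$, propagate $v$ along the chain via Lemmas \ref{lem:kxtrans}(a) and \ref{lem:eqextr}(c), and anchor the value with Lemma \ref{lem:eqextr}(b); part (b) is the analogous ascent to a maximal element via Lemma \ref{lem:eqextr}(a). The only misstep is the side remark that duality with $P^*$ sends $a\mapsto 1/a$---passing to $P^*$ with the distinguished pair $(y,x)$ leaves the sequence $N_k$, and hence $a$, unchanged---but this is harmless since you supply the direct argument for part (b) anyway.
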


\begin{proof}
If $z\in P_{<x}$, let $z\gtrdot z_1 \gtrdot z_2\gtrdot \cdots \gtrdot z_r$ 
be a decreasing chain from $z$ to a minimal element $z_r$ of $P$. Then
$z_i\in P_{<x}$ for all $i$, so that
Lemma \ref{lem:kxtrans}(a) and Lemma \ref{lem:eqextr}(c) yield
$v_z=v_{z_1}=v_{z_2}=\cdots=v_{z_r}$. Part (a) follows as $v_{z_r}=0$ by 
Lemma \ref{lem:eqextr}(b). The proof of part (b) follows in a completely 
analogous fashion using Lemma \ref{lem:eqextr}(a).
\end{proof}

We now show how the combinatorial conditions of \Ek\ and \Ekd\ arise.

\begin{lem}
\label{lem:ek}
Assume that $N_k>0$ and $a^2N_{k+1}=aN_k=N_{k-1}$, and let $v\in V$ be
the vector provided by Lemma \ref{lem:eqextr}. Then the following hold.
\begin{enumerate}[leftmargin=*,itemsep=\topsep,label=\alph*.]
\item If $(\mathscr{E})$ holds, then every
$z\in P_{<x}$ satisfies $|P_{z<\cdot<y}\cup\{x\}|>k$.
\item If $(\mathscr{E}^*)$ holds, then every
$z\in P_{>y}$ satisfies $|P_{x<\cdot<z}\cup\{y\}|>k$.
\end{enumerate}
\end{lem}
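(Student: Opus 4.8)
The plan is to argue by contradiction, exploiting the $k$-extreme anchor vectors isolated in Lemma~\ref{lem:kxanchor}(c) together with the linear constraints on $v$ recorded in Lemmas~\ref{lem:eqextr} and \ref{lem:vend}. I will treat part~(a); part~(b) is the mirror image, run with the roles of $x$ and $y$, and of ``below'' and ``above,'' interchanged (using Lemma~\ref{lem:kxanchor}(d), Lemma~\ref{lem:eqextr}(g), and Lemma~\ref{lem:vend}(b) in place of Lemma~\ref{lem:kxanchor}(c), Lemma~\ref{lem:eqextr}(f), and Lemma~\ref{lem:vend}(a)).

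First I would assume, for contradiction, that some $z\in P_{<x}$ satisfies $|P_{z<\cdot<y}\cup\{x\}|\le k$, and reduce to a cover relation. Since $z<x$, there is a saturated chain $z=z_0\lessdot z_1\lessdot\cdots\lessdot z_m=x$ in $P$; I set $z':=z_{m-1}$, so that $z'\lessdot x$ and $z'\in P_{<x}$, and also $z'\ne y$ because $z'<x$ and $x\not\ge y$, hence $z'\in P\backslash\{x,y\}$. The point of this reduction is that $z<z'$ forces $P_{z'<\cdot<y}\subseteq P_{z<\cdot<y}$, so $|P_{z'<\cdot<y}\cup\{x\}|\le|P_{z<\cdot<y}\cup\{x\}|\le k$; that is, the combinatorial hypothesis is inherited by the covering pair $z'\lessdot x$. (Note that the standing hypothesis of Lemma~\ref{lem:kxanchor} holds here: since $a>0$ and $a^2N_{k+1}=aN_k=N_{k-1}$ with $N_k>0$, we get $N_k^2=N_{k-1}N_{k+1}>0$.)

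With $z'\lessdot x$ and $|P_{z'<\cdot<y}\cup\{x\}|\le k$ in hand, Lemma~\ref{lem:kxanchor}(c) shows that the anchor vector $e_{z'xy}$ is $k$-extreme, so Lemma~\ref{lem:eqextr}(f) gives $v_{z'}=v_{xy}+\frac{a}{2}$. On the other hand, $z'\in P_{<x}$ gives $v_{z'}=0$ by Lemma~\ref{lem:vend}(a). Equating the two values, I obtain $a=-2v_{xy}$, which contradicts $(\mathscr{E})$; this proves~(a).

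I do not expect any real obstacle. The only point requiring a moment's thought is the reduction to a covering pair, which works precisely because $P_{z<\cdot<y}$ can only shrink as its lower endpoint is raised, so replacing $z$ by $z'=z_{m-1}$ preserves the inequality $|P_{z<\cdot<y}\cup\{x\}|\le k$ while putting us in position to invoke Lemma~\ref{lem:kxanchor}(c); everything else is a direct combination of results from the previous section.
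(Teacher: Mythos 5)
Your proof is correct and follows essentially the same route as the paper: reduce to a covering pair $z'\lessdot x$ with $z\le z'$ using monotonicity of the cardinality condition, then combine Lemma~\ref{lem:kxanchor}(c), Lemma~\ref{lem:eqextr}(f), and Lemma~\ref{lem:vend}(a) to get $0=v_{z'}=v_{xy}+\frac{a}{2}$, contradicting $(\mathscr{E})$ (the paper phrases the same computation as a contrapositive). The one cosmetic slip is the claim ``$z<z'$ forces $P_{z'<\cdot<y}\subseteq P_{z<\cdot<y}$'': when $z\lessdot x$ the chain has length one and $z'=z$, so you only have $z\le z'$; the inclusion still holds trivially, so the argument is unaffected.
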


\begin{proof}
Suppose there exists $z\in P_{<x}$ such that $|P_{z<\cdot<y}\cup\{x\}|\le 
k$. Then the latter holds automatically for any $z'\ge 
z$ (as then $P_{z'<\cdot<y}\subseteq P_{z<\cdot<y}$). In particular, we 
consider $z'\ge z$ that is a maximal element of $P_{<x}$, so that 
$z'\lessdot x$. Then Lemma~\ref{lem:kxanchor}(c), Lemma 
\ref{lem:eqextr}(f), and Lemma \ref{lem:vend}(a) yield 
$0=v_{z'}=v_{xy}+\frac{a}{2}$, which is the converse of $(\mathscr{E})$. 
Thus we proved the contrapositive of part (a).

Similarly, if there exists $z\in P_{>y}$ so that 
$|P_{x<\cdot<z}\cup\{y\}|\le k$, let $y\lessdot z'\le z$. Then Lemma 
\ref{lem:kxanchor}(d), Lemma \ref{lem:eqextr}(g), and Lemma 
\ref{lem:vend}(b) yield $1-a=v_{z'}=v_{xy}-\frac{a}{2}$, which is the 
converse of $(\mathscr{E}^*)$. Thus we proved the contrapositive of part 
(b).
\end{proof}

Lemma \ref{lem:ek} states that $(\mathscr{E})$ implies the first part of 
\Ek, and that $(\mathscr{E}^*)$ implies the first part of \Ekd. The second 
part of \Ek\ and of \Ekd\ is already implied by the first part when 
$P_{<x}$ and $P_{>y}$ are nonempty, so that these additional conditions 
only need to be established in the special case that $x$ is minimal or 
that $y$ is maximal. We do not need to consider these cases separately, 
however, as we can always modify the poset to avoid this situation without 
changing the linear extension numbers $N_k$ by adding a globally minimal 
and maximal element to $P$. We postpone this straightforward argument to 
the proof of Corollary \ref{cor:mainmain}.

\subsection{The conditions $(\mathscr{M}),(\mathscr{M}^*)$}
\label{sec:pfm}

The aim of this section is to understand the origin of conditions \Mk\ and 
\Mkd\ in Definition \ref{defn:conditions}, which are concerned with 
elements $z\in P_{>x,\not\ge y}=P_{x<\cdot<y}\cup P_{>x,\parallel y}$ or 
$P_{<y,\not\le x}=P_{x<\cdot<y}\cup P_{<y,\parallel x}$. The argument in 
the case that $z\in P_{>x,\parallel y}$ or $P_{<y,\parallel x}$ is 
similar to section \ref{sec:pfe}. The case $z\in P_{x<\cdot<y}$ is 
more subtle, however, and will require additional insights.

\subsubsection{The case $z\in P_{>x,\parallel y}$ or $z\in P_{<y,\parallel x}$}

We begin by computing $v_z$.

\begin{lem}
\label{lem:vmid}
Assume that $N_k>0$ and $a^2N_{k+1}=aN_k=N_{k-1}$, and let $v\in V$ be 
the vector provided by Lemma \ref{lem:eqextr}. Then the following hold.
\begin{enumerate}[leftmargin=*,itemsep=\topsep,label=\alph*.]
\item $v_z=0$ for every $z\in P_{<y,\parallel x}$.
\item $v_z=1-a$ for every $z\in P_{>x,\parallel y}$.
\item $v_z=v_{z'}$ for all $z,z'\in P_{x<\cdot<y}$ with $z<z'$.
\end{enumerate}
\end{lem}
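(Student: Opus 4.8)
The plan is to follow the template of the proof of Lemma~\ref{lem:vend}: propagate the coordinates of $v$ along saturated chains using the $k$-extreme transition and anchor vectors catalogued in Lemma~\ref{lem:kxtrans}, together with the linear constraints supplied by Lemma~\ref{lem:eqextr} and the boundary values already established in Lemma~\ref{lem:vend}. Part (c) is the quickest: given $z,z'\in P_{x<\cdot<y}$ with $z<z'$, I would pick a saturated chain $z=w_0\lessdot w_1\lessdot\cdots\lessdot w_m=z'$, note that $x<w_i<y$ forces $w_i\in P_{x<\cdot<y}$, invoke Lemma~\ref{lem:kxtrans}(c) to see each $e_{w_iw_{i+1}}$ is $k$-extreme, and use Lemma~\ref{lem:eqextr}(c) (whose side condition holds since $w_i>x$ gives $w_i\notin P_{<x}$) to conclude $v_z=v_{w_1}=\cdots=v_{z'}$.

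Parts (a) and (b) are order-reversal duals of each other, so I would prove (a) and obtain (b) verbatim by symmetry. The argument for (a) is a ``descend, then anchor'' argument. First I would record two structural facts: if $z\in P_{<y,\parallel x}$ and $w<z$, then $w\in P_{<x}\cup P_{<y,\parallel x}$ and $w\neq x,y$ (since $w>x$, $w=x$, or $w=y$ all contradict $z\parallel x$ or $z<y$); and $P_{<x}$ is a lower set. Starting from a given $z\in P_{<y,\parallel x}$, I would descend through covers, always choosing a covered element still lying in $P_{<y,\parallel x}$ whenever possible; this terminates at some $z_r\in P_{<y,\parallel x}$ all of whose covers lie in $P_{<x}$, whence $P_{<z_r}\subseteq P_{<x}$ because $P_{<x}$ is a lower set. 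Along the descent every consecutive pair $z_{i+1}\lessdot z_i$ lies in $P_{<y,\parallel x}$, so $e_{z_{i+1}z_i}$ is $k$-extreme by Lemma~\ref{lem:kxtrans}(b); since $z_i<y$ gives $z_i\notin P_{>y}$, Lemma~\ref{lem:eqextr}(c) yields $v_z=v_{z_1}=\cdots=v_{z_r}$. It then remains to show $v_{z_r}=0$: if $P_{<z_r}=\varnothing$ then $z_r$ is a minimal element of $P$ distinct from $x,y$, so $v_{z_r}=0$ by Lemma~\ref{lem:eqextr}(b); otherwise I would pick $w\lessdot z_r$, which forces $w\in P_{<z_r}\subseteq P_{<x}$, apply Lemma~\ref{lem:kxtrans}(d) (valid because $z_r\in P_{<y,\parallel x}$ and $P_{<z_r}\subseteq P_{<x}$) to see $e_{wz_r}$ is $k$-extreme, and conclude $v_{z_r}=v_w=0$ from Lemma~\ref{lem:eqextr}(c) ($z_r\notin P_{>y}$) and Lemma~\ref{lem:vend}(a). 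Part (b) is the same with the order reversed, using the dual clauses of Lemma~\ref{lem:kxtrans}(b),(d), parts (a),(c) of Lemma~\ref{lem:eqextr}, and Lemma~\ref{lem:vend}(b).

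The one point I would be most careful about is the choice of descent path in (a): one must remain inside $P_{<y,\parallel x}$ as long as possible, because the only transition vector available at the interface with $P_{<x}$ is the one in Lemma~\ref{lem:kxtrans}(d), whose hypothesis $P_{<z_r}\subseteq P_{<x}$ may fail if one descends greedily. Everything else is routine bookkeeping---checking that each invoked vector falls under a clause of Lemma~\ref{lem:kxtrans}, and that the side condition ``$z\notin P_{<x}$ or $z'\notin P_{>y}$'' of Lemma~\ref{lem:eqextr}(c) is satisfied in every application (it always is here, since in each case the lower element of the pair is $>x$ or the upper element is $<y$).
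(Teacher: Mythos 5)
Your proof is correct and follows essentially the same approach as the paper's: for (a) and (b) you descend through covers within $P_{<y,\parallel x}$ (resp.\ $P_{>x,\parallel y}$) using Lemma~\ref{lem:kxtrans}(b) until reaching a minimal (resp.\ maximal) element of that set, then cross into $P_{<x}$ (resp.\ $P_{>y}$) via Lemma~\ref{lem:kxtrans}(d), and for (c) you use a cover chain inside $P_{x<\cdot<y}$ with Lemma~\ref{lem:kxtrans}(c). The only cosmetic difference is that you terminate the descent by invoking Lemma~\ref{lem:vend} for the neighboring element in $P_{<x}$ (or $P_{>y}$), whereas the paper extends the cover chain all the way down to a minimal (or up to a maximal) element of $P$ using Lemma~\ref{lem:kxtrans}(a)---but those two routes are substantively identical, since the paper's proof of Lemma~\ref{lem:vend} performs exactly that chain extension.
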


\begin{proof}
If $z\in P_{<y,\parallel x}$, let $z\gtrdot z_1\gtrdot 
z_2\gtrdot\cdots\gtrdot z_r$ by a decreasing chain from $z$ to a minimal 
element $z_r$ of $P_{<y,\parallel x}$. If $z_r$ is minimal in $P$, we stop 
the chain at this point. Otherwise, $z_r\gtrdot z_{r+1}$ for some
element $z_{r+1}\in P_{<x}$, and we can continue the chain
$z_{r+1}\gtrdot\cdots\gtrdot z_s$ until we reach a minimal element $z_s$ 
of $P$.

Now note that Lemma \ref{lem:kxtrans}(a,b) and
Lemma \ref{lem:eqextr}(c) yield $v_z=v_{z_1}=\cdots=v_{z_r}$ and
$v_{z_{r+1}}=\cdots=v_{z_s}$. On the other hand, as $z_r$ is a minimal 
element of $P_{<y,\parallel x}$, we have $P_{<z_r}\subseteq P_{<x}$, and 
thus Lemma \ref{lem:kxtrans}(d) and Lemma \ref{lem:eqextr}(c) yield
$z_r=z_{r+1}$. Part (a) follows as $v_{z_s}=0$ by Lemma 
\ref{lem:eqextr}(b).
The proof of part (b) follows in a completely
analogous fashion using Lemma \ref{lem:eqextr}(a).

For part (c), let
$z\lessdot z_1\lessdot \cdots\lessdot z_r\lessdot z'$ be a chain 
connecting $z,z'$, and note that we must have $z_i\in P_{x<\cdot<y}$
for all $i$. Thus Lemma \ref{lem:kxtrans}(c) and Lemma 
\ref{lem:eqextr}(c) imply that $v_z=v_{z_1}=\cdots=v_{z_r}=v_{z'}$, 
concluding the proof.
\end{proof}

We now show how the conditions of \Mk\ and \Mkd\ arise
in the present case.

\begin{lem}
\label{lem:mkxy}
Assume that $N_k>0$ and $a^2N_{k+1}=aN_k=N_{k-1}$, and let $v\in V$ be
the vector provided by Lemma \ref{lem:eqextr}. Then the following hold.
\begin{enumerate}[leftmargin=*,itemsep=\topsep,label=\alph*.]
\item If $(\mathscr{M})$ holds, then every
$z\in P_{>x,\parallel y}$ satisfies $|P_{<z}|+|P_{>y}|>n-k$.
\item If $(\mathscr{M}^*)$ holds, then every
$z\in P_{<y,\parallel x}$ satisfies $|P_{>z}|+|P_{<x}|>n-k$.
\end{enumerate}
\end{lem}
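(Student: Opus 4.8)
The plan is to prove the contrapositive of each part, in the same spirit as Lemma~\ref{lem:ek}: if some element violates the stated inequality, we produce enough $k$-extreme vectors --- via Lemmas~\ref{lem:kxanchor} and~\ref{lem:kxtrans} --- that the linear constraints furnished by Lemma~\ref{lem:eqextr} force $a=\tfrac{2}{3}(1-v_{xy})$ in case~(a), resp.\ $a=2v_{xy}$ in case~(b), which are precisely the negations of $(\mathscr{M})$ and $(\mathscr{M}^*)$. I will describe (a); part (b) is the dual argument, using Lemma~\ref{lem:kxanchor}(a), Lemma~\ref{lem:kxtrans}(h), Lemma~\ref{lem:eqextr}(d) and Lemma~\ref{lem:vmid}(a) in place of their counterparts.

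So suppose some $z\in P_{>x,\parallel y}$ has $|P_{<z}|+|P_{>y}|\le n-k$. The first move is a reduction. Since $P_{<w}\subsetneq P_{<z}$ whenever $w<z$, any $w\in P_{>x,\parallel y}$ lying below $z$ inherits the same inequality; iterating, we may assume $z$ is a \emph{minimal} element of $P_{>x,\parallel y}$. The gain is that this pins down $P_{x<\cdot<z}$: any $w$ with $x<w<z$ satisfies $w\not\ge y$ (as $w\le z\parallel y$), hence lies in $P_{>x,\not\ge y}=P_{x<\cdot<y}\cup P_{>x,\parallel y}$, and minimality of $z$ forbids the second possibility; thus $P_{x<\cdot<z}\subseteq P_{x<\cdot<y}$.

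Next I would show $v_z=v_{xy}+\tfrac{a}{2}$. Fix a saturated chain $x\lessdot z_1\lessdot\cdots\lessdot z_m=z$. Every $z_i$ lies in $P_{x<\cdot<y}\cup P_{>x,\parallel y}$ and satisfies $|P_{<z_i}|+|P_{>y}|\le n-k$ (as $z_i\le z$), and for $i<m$ minimality of $z$ forces $z_i\in P_{x<\cdot<y}$. Since $x\lessdot z_1$, Lemma~\ref{lem:kxanchor}(b) makes $e_{xyz_1}$ $k$-extreme, so Lemma~\ref{lem:eqextr}(e) gives $v_{z_1}=v_{xy}+\tfrac{a}{2}$. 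If $m=1$ this already reads $v_z=v_{xy}+\tfrac{a}{2}$; otherwise Lemma~\ref{lem:vmid}(c) propagates the value along the chain $z_1\lessdot\cdots\lessdot z_{m-1}$ inside $P_{x<\cdot<y}$, and the last link $z_{m-1}\lessdot z$ is handled by Lemma~\ref{lem:kxtrans}(g) followed by Lemma~\ref{lem:eqextr}(c). This last step is exactly where the reduction is used: the hypothesis of Lemma~\ref{lem:kxtrans}(g) is $|P_{x<\cdot<z}\cup P_{x<\cdot<y}|\le k-1$, which holds because the union equals $P_{x<\cdot<y}$ and $|P_{x<\cdot<y}|\le k-2$ by Corollary~\ref{cor:eqpos}. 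In every case $v_z=v_{xy}+\tfrac{a}{2}$. On the other hand $z\in P_{>x,\parallel y}$ gives $v_z=1-a$ by Lemma~\ref{lem:vmid}(b), so $1-a=v_{xy}+\tfrac{a}{2}$, i.e.\ $a=\tfrac{2}{3}(1-v_{xy})$, contradicting $(\mathscr{M})$.

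I expect the main obstacle to be the discrepancy with the situation of Lemma~\ref{lem:ek}. There, a bad element of $P_{<x}$ could be pushed up to an element covering $x$ while remaining in $P_{<x}$, so a single anchor-vector identity sufficed. Here $P_{>x,\parallel y}$ need not sit directly above $x$; it may sit above part of $P_{x<\cdot<y}$, so no such shortcut exists, and one is forced to traverse a chain through $P_{x<\cdot<y}$ and invoke the more delicate transition-vector case Lemma~\ref{lem:kxtrans}(g). Arranging that this case is applicable --- i.e.\ that $P_{x<\cdot<z}\subseteq P_{x<\cdot<y}$, so that the count in its hypothesis is bounded by Corollary~\ref{cor:eqpos} --- is the whole purpose of reducing to a minimal $z$.
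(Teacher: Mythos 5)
Your proof is correct and follows essentially the same route as the paper: reduce to a minimal $z\in P_{>x,\parallel y}$, observe $P_{x<\cdot<z}\subseteq P_{x<\cdot<y}$, and then use Lemma~\ref{lem:kxanchor}(b), Lemma~\ref{lem:eqextr}(c,e), Lemma~\ref{lem:kxtrans}(g), and Lemma~\ref{lem:vmid}(b,c) to derive $1-a=v_{xy}+\tfrac{a}{2}$. The only cosmetic difference is direction of travel: the paper works back from $z'$ (treating $x\lessdot z'$ as a separate case), while you propagate forward along a chain from the element covering $x$ up to $z$, absorbing that case as $m=1$.
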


\begin{proof}
Suppose there exists $z\in P_{>x,\parallel y}$ such that
$|P_{<z}|+|P_{>y}|\le n-k$. Then the latter condition holds automatically 
for any $z'\le z$ (as then $P_{<z'}\subseteq P_{<z}$). In particular, 
we consider $z'\le z$ that is a minimal element of $P_{>x,\parallel y}$.

Suppose first that $x\lessdot z'$. Lemma \ref{lem:kxanchor}(b), Lemma 
\ref{lem:eqextr}(e), and Lemma \ref{lem:vmid}(b) then yield 
$1-a=v_{z'}=v_{xy}+\frac{a}{2}$, which is the converse of $(\mathscr{M})$. 
Thus we have proved the contrapositive of part (a) in this case.

On the other hand, if $z'$ does not cover $x$, then we must have 
$P_{x<\cdot<z'}\subseteq P_{x<\cdot<y}$ as $z'$ was chosen to be minimal 
in $P_{>x,\parallel y}$. Moreover, note that $|P_{x<\cdot<y}|\le k-1$ by 
Corollary \ref{cor:eqpos}. Applying Lemma \ref{lem:kxtrans}(g) and
Lemma \ref{lem:eqextr}(c) then shows that $v_{z'}=v_{z''}$ for 
$x<z''\lessdot z'$. In particular, Lemma \ref{lem:vmid}(b,c) yields
$v_{z'''}=1-a$ for all $z'''\in P_{x<\cdot\le z''}$. Now choose $x\lessdot 
z'''\le z''<z$, and note that $|P_{<z'''}|+|P_{>y}|\le n-k$ still holds.
Applying Lemma \ref{lem:kxanchor}(b) and Lemma \ref{lem:eqextr}(e)
yields $1-a=v_{xy}+\frac{a}{2}$, which is the converse of $(\mathscr{M})$.
This completes the proof of part (a).

The proof of part (b) is completely analogous, but now we use
Lemma \ref{lem:kxanchor}(a), Lemma
\ref{lem:eqextr}(d), Lemma \ref{lem:vmid}(a), and
Lemma \ref{lem:kxtrans}(h).
\end{proof}

\subsubsection{The case $z\in P_{x<\cdot<y}$}

Let us begin by explaining the basic difficulty in this case. So far, all 
our arguments started with the construction of a chain that goes from the 
element $z$ of interest to a maximal or minimal element of $P$ without 
passing through $x$ or $y$. We observed that such chains can be 
constructed in such a way that all the transition vectors along the chain 
are $k$-extreme, so that we can compute the value of $v_z$ using Lemma 
\ref{lem:eqextr} (cf.\ Lemmas \ref{lem:vend} and \ref{lem:vmid}). However, 
when $z\in P_{x<\cdot<y}$ it is not even clear that there exists any chain 
connecting $z$ to a minimal or maximal element of $P$ that does not pass 
through $x$ or $y$. In the absence of such a chain we would have no 
mechanism to obtain information about $v_z$.

We presently aim to show that when the Kahn-Saks inequality holds with 
equality, such a chain must always exist. This is not obvious, and arises 
here in a rather subtle manner from the equality conditions.

We begin by showing that the combinatorial conditions of \Mk\ and \Mkd\ 
cannot simultaneously fail to hold for elements of $P_{x<\cdot<y}$.

\begin{lem}
\label{lem:notbothfail}
Assume that $N_k>0$ and $a^2N_{k+1}=aN_k=N_{k-1}$. Then for any comparable 
elements $z,z'\in P_{x<\cdot<y}$, the following hold.
\begin{enumerate}[leftmargin=*,itemsep=\topsep,label=\alph*.]
\item If $|P_{<z}|+|P_{>y}|\le n-k$, then $|P_{>z'}|+|P_{<x}|>n-k$.
\item If $|P_{>z}|+|P_{<x}|\le n-k$, then $|P_{<z'}|+|P_{>y}|>n-k$.
\end{enumerate}
\end{lem}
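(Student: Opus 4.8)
The plan is to assume, toward a contradiction, that for some pair of comparable elements $z,z'\in P_{x<\cdot<y}$ one has both $|P_{<z}|+|P_{>y}|\le n-k$ and $|P_{>z'}|+|P_{<x}|\le n-k$, and to show that this is incompatible with the existence of the vector $v\in V$ provided by Lemma \ref{lem:eqextr}. It suffices to treat statement (a): statement (b) follows by applying (a) to the dual poset $P^*$ with distinguished elements $y,x$, under which $N_k$ and the equality hypothesis are unchanged, $P_{x<\cdot<y}$ is preserved, and the two cardinality inequalities exchange roles.

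The heart of the argument is the computation of $v_z$ and $v_{z'}$. First I would upgrade the hypothesis on $z$: choosing $z''$ with $x\lessdot z''\le z$ (the element covering $x$ on a saturated chain from $x$ to $z$), we have $z''\in P_{x<\cdot<y}$ and $P_{<z''}\subseteq P_{<z}$, so $|P_{<z''}|+|P_{>y}|\le n-k$ as well. Lemma \ref{lem:kxanchor}(b) then shows $e_{xyz''}$ is $k$-extreme, Lemma \ref{lem:eqextr}(e) gives $v_{z''}=v_{xy}+\tfrac a2$, and Lemma \ref{lem:vmid}(c) (applied along the chain from $z''$ to $z$) transports this to $v_z=v_{xy}+\tfrac a2$. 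Symmetrically, choosing $z'''$ with $z'\le z'''\lessdot y$ yields $z'''\in P_{x<\cdot<y}$ and $|P_{>z'''}|+|P_{<x}|\le n-k$, so Lemma \ref{lem:kxanchor}(a), Lemma \ref{lem:eqextr}(d) and Lemma \ref{lem:vmid}(c) give $v_{z'}=v_{xy}-\tfrac a2$. Since $z$ and $z'$ are comparable elements of $P_{x<\cdot<y}$, Lemma \ref{lem:vmid}(c) also gives $v_z=v_{z'}$, whence $v_{xy}+\tfrac a2=v_{xy}-\tfrac a2$, i.e.\ $a=0$ --- contradicting $a>0$.

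The step I expect to be the most delicate is not any single computation but the bookkeeping around the covering elements $z''$ and $z'''$: one must check that they genuinely lie in $P_{x<\cdot<y}$ (so that the anchor lemmas of section \ref{sec:face} and Lemma \ref{lem:vmid}(c) apply) and that passing from $z$ down to $z''$, and from $z'$ up to $z'''$, does not break the cardinality inequalities --- which is precisely where the hypothesis $z,z'\in P_{x<\cdot<y}$ is used. Once this is in place, the remainder is a short chain of citations to the $k$-extremality results already established in section \ref{sec:face} together with Lemma \ref{lem:eqextr}.
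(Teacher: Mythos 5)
Your proof is correct and follows essentially the same route as the paper: both derive the contradiction $v_{xy}+\tfrac{a}{2}=v_{xy}-\tfrac{a}{2}$ by locating elements of $P_{x<\cdot<y}$ covering $x$ and covered by $y$ on a saturated chain through $z$ and $z'$, then invoking Lemma \ref{lem:kxanchor}(a,b), Lemma \ref{lem:eqextr}(d,e), and Lemma \ref{lem:vmid}(c). The only cosmetic difference is the reduction of (b) to (a): you pass to the dual poset, whereas the paper simply notes that (b) is the contrapositive of (a) with $z$ and $z'$ swapped --- both are valid, but the latter is more economical.
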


\begin{proof}
It suffices to prove part (a), as part (b) is the contrapositive of part 
(a) with the roles of $z,z'$ reversed. As $z,z'$ are comparable, there is 
a chain $x\lessdot z_1\lessdot\cdots\lessdot z_r\lessdot y$ so that
$z_i=z$ and $z_j=z'$ for some $i,j$. Now suppose that part (a) fails, that 
is, that $|P_{<z}|+|P_{>y}|\le n-k$ and $|P_{>z'}|+|P_{<x}|\le n-k$.
Then certainly 
$|P_{<z_1}|+|P_{>y}|\le n-k$ and $|P_{>z_r}|+|P_{<x}|\le n-k$ as well.
Applying Lemma \ref{lem:kxanchor}(a,b), Lemma \ref{lem:eqextr}(d,e), and
Lemma \ref{lem:vmid}(c) yields
$v_{xy}+\frac{a}{2}=v_{z_1}=v_{z_r}=v_{xy}-\frac{a}{2}$, which
entails a contradiction as $a>0$. Thus part (a) must hold, concluding the 
proof.
\end{proof}

We now use Lemma \ref{lem:notbothfail} to reason that if the combinatorial 
condition of \Mk\ or of \Mkd\ fails, then there must exist a chain 
starting from any point in $P_{x<\cdot<y}$ that leaves this set without 
passing through $x$ or $y$.

\begin{lem}
\label{lem:escape}
Assume that $N_k>0$ and $a^2N_{k+1}=aN_k=N_{k-1}$, and let $z\in 
P_{x<\cdot<y}$.
\begin{enumerate}[leftmargin=*,itemsep=\topsep,label=\alph*.]
\item If $|P_{<z}|+|P_{>y}|\le n-k$, then any
$z\le z_1\lessdot y$ satisfies $z_1\lessdot z_2$ for some $z_2\in 
P_{>x,\parallel y}$.
\item If $|P_{>z}|+|P_{<x}|\le n-k$, then any
$z\ge z_1\gtrdot x$ satisfies $z_1\gtrdot z_2$ for some
$z_2\in P_{<y,\parallel x}$.
\end{enumerate}
\end{lem}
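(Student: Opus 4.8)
The plan is to prove part (a) by contradiction; part (b) then follows from the order-reversing symmetry of the whole setup (interchange $x\leftrightarrow y$, replace the sets $P_{<x},P_{>y},P_{>x,\parallel y}$ by $P_{>y},P_{<x},P_{<y,\parallel x}$, and use Lemma~\ref{lem:notbothfail}(b) in place of (a)). So fix $z\in P_{x<\cdot<y}$ with $|P_{<z}|+|P_{>y}|\le n-k$, fix any $z_1$ with $z\le z_1\lessdot y$, and suppose toward a contradiction that no element of $P_{>x,\parallel y}$ covers $z_1$.

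The key point is that this assumption forces $y$ to be the \emph{unique} element covering $z_1$. First note that $z\le z_1$ and $z>x$ give $z_1\in P_{x<\cdot<y}$. Now let $w$ be any element with $w\gtrdot z_1$. Then $w>x$, so $w\ne x$ and $w\notin P_{\le x}$; moreover $w$ cannot be incomparable to $y$ (else $w\in P_{>x,\parallel y}$, contrary to assumption), $w$ cannot satisfy $w<y$ (else $z_1<w<y$, contradicting $z_1\lessdot y$), and $w$ cannot satisfy $w>y$ (else $z_1<y<w$, contradicting $z_1\lessdot w$). Hence $w=y$. Passing to a saturated chain upward from $z_1$, every element $>z_1$ is then $\ge y$, so $P_{>z_1}=\{y\}\cup P_{>y}$, a disjoint union, and therefore $|P_{>z_1}|=1+|P_{>y}|$.

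Finally I would invoke the equality hypothesis through Lemma~\ref{lem:notbothfail}. Since $z\le z_1$ with $z,z_1\in P_{x<\cdot<y}$ and $|P_{<z}|+|P_{>y}|\le n-k$, part~(a) of that lemma yields $|P_{>z_1}|+|P_{<x}|>n-k$. Substituting $|P_{>z_1}|=1+|P_{>y}|$ gives $|P_{<x}|+|P_{>y}|>n-k-1$, which by Lemma~\ref{lem:vanish} forces $N_k=0$, contradicting $N_k>0$. This completes part (a), and part (b) follows by the symmetry noted above.

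I expect the only real subtlety to lie in the middle step: recognizing that the negated conclusion, combined with $z_1\lessdot y$, leaves $y$ as the sole cover of $z_1$ and thereby rigidly determines $P_{>z_1}$. Once this is in place the contradiction is a single cardinality computation; no geometry, $k$-extreme vectors, or the comparison vector $v$ are needed directly, the information from the Kahn-Saks equality entering only through Lemma~\ref{lem:notbothfail} (which itself rests on the $v$-based arguments of the preceding subsections).
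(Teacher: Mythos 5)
Your proof is correct and follows essentially the same route as the paper: assume (a) fails, deduce that $y$ is the unique cover of $z_1$ so that $P_{>z_1}=P_{\geq y}$, and then contradict the cardinality bound via Lemma~\ref{lem:notbothfail}(a). The only cosmetic difference is that you invoke Lemma~\ref{lem:vanish} directly at the end, whereas the paper uses its packaged form, Corollary~\ref{cor:eqpos}; these are interchangeable.
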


\begin{proof}
Suppose that $|P_{<z}|+|P_{>y}|\le n-k$, and let
$z\le z_1\lessdot y$. Suppose that (a) fails, that is, that
$z_1$ is not covered by any element of $P_{>x,\parallel y}$. Then we must 
have $P_{>z_1}=P_{\ge y}$. We can therefore estimate using
Lemma \ref{lem:notbothfail}(a) and Corollary \ref{cor:eqpos}
$$
	n-k < |P_{>z_1}|+|P_{<x}| =
	|P_{<x}|+|P_{>y}|+1 < n-k,
$$
which entails a contradiction. Thus part (a) is proved, and the proof of 
part (b) is completely analogous using Lemma \ref{lem:notbothfail}(b).
\end{proof}

With this structural information in hand, we can proceed to proving a 
counterpart of 
Lemma \ref{lem:mkxy} for elements $z\in P_{x<\cdot<y}$.

\begin{lem}
\label{lem:mkmid}
Assume that $N_k>0$ and $a^2N_{k+1}=aN_k=N_{k-1}$, and let $v\in V$ be
the vector provided by Lemma \ref{lem:eqextr}. Then the following hold.
\begin{enumerate}[leftmargin=*,itemsep=\topsep,label=\alph*.]
\item If $(\mathscr{M})$ holds, then every
$z\in P_{x<\cdot<y}$ satisfies $|P_{<z}|+|P_{>y}|>n-k$.
\item If $(\mathscr{M}^*)$ holds, then every
$z\in P_{x<\cdot<y}$ satisfies $|P_{>z}|+|P_{<x}|>n-k$.
\end{enumerate}
\end{lem}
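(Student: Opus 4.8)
The plan is to mimic the strategy used for $z\in P_{>x,\parallel y}$ in Lemma \ref{lem:mkxy}, but to combine it with the ``escape chain'' provided by Lemma \ref{lem:escape} in order to transport the value $1-a$ (resp.\ $0$) from the exterior of $P_{x<\cdot<y}$ into the set $P_{x<\cdot<y}$ itself. Concretely, to prove part (a) I would argue by contrapositive: suppose there exists $z\in P_{x<\cdot<y}$ with $|P_{<z}|+|P_{>y}|\le n-k$, and derive the converse $a=\frac23(1-v_{xy})$ of $(\mathscr{M})$. First, since $z\in P_{x<\cdot<y}$, we may choose $z\le z_1\lessdot y$ (any maximal chain from $z$ to $y$ supplies such a $z_1\in P_{x<\cdot<y}$), and note $|P_{<z_1}|+|P_{>y}|\le n-k$ still holds because $P_{<z_1}\supseteq\{x,z,\dots\}$? — more carefully, $z\le z_1$ gives $P_{<z}\subseteq P_{<z_1}$, so the hypothesis passes to $z_1$. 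By Lemma \ref{lem:escape}(a), $z_1\lessdot z_2$ for some $z_2\in P_{>x,\parallel y}$.

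The key step is then to show that the transition vector $e_{z_1 z_2}$ is $k$-extreme, so that Lemma \ref{lem:eqextr}(c) forces $v_{z_1}=v_{z_2}$. Here $z_1\in P_{x<\cdot<y}$ and $z_2\in P_{>x,\parallel y}$, which is exactly the hypothesis of Lemma \ref{lem:kxtrans}(g); its side condition is $|P_{x<\cdot<z_2}\cup P_{x<\cdot<y}|\le k-1$. Since $z_2\in P_{>x,\parallel y}$ we have $P_{x<\cdot<z_2}\subseteq P_{>x,\not\ge y}$, and in fact (because $z_1\lessdot z_2$ and any element strictly between $x$ and $z_2$ that is also $<y$ lies in $P_{x<\cdot<y}$, while those $\parallel y$ contribute to $P_{<z_2}$) one checks $|P_{x<\cdot<z_2}\cup P_{x<\cdot<y}|\le|P_{<z_2}|-1\le|P_{<z_1}|\le n-k-|P_{>y}|-? $; the cleanest route is to bound $|P_{x<\cdot<z_2}\cup P_{x<\cdot<y}|\le|P_{<z_2}\setminus\{x\}|$ and combine $|P_{<z_2}|\le|P_{<z_1}|+1$ (as $z_1\lessdot z_2$) with $|P_{<z_1}|+|P_{>y}|\le n-k$ and Corollary \ref{cor:eqpos}'s bound $n-k< n-1-|P_{<x}|-|P_{>y}|$, i.e.\ $|P_{<x}|+|P_{>y}|<n-k-1$, to get the needed inequality $\le k-1$. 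Granting this, $e_{z_1z_2}$ is $k$-extreme and $v_{z_1}=v_{z_2}=1-a$ by Lemma \ref{lem:vmid}(b).

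Once we know $v_{z_1}=1-a$ for this particular $z_1\in P_{x<\cdot<y}$ (with $|P_{<z_1}|+|P_{>y}|\le n-k$), we invoke Lemma \ref{lem:kxanchor}(a): since $z_1\in P_{x<\cdot<y}$ and $z_1\lessdot y$, and $|P_{>z_1}|+|P_{<x}|\le n-k$? — careful, Lemma \ref{lem:kxanchor}(a) wants the hypothesis $|P_{>z_1}|+|P_{<x}|\le n-k$, not $|P_{<z_1}|+|P_{>y}|\le n-k$. This is precisely where Lemma \ref{lem:notbothfail} is needed in reverse: actually the right anchor vector to use is $e_{z_1 xy}$ which, by Lemma \ref{lem:hanchor}(a) and Lemma \ref{lem:eqextr}(d), gives $v_{z_1}=v_{xy}-\frac a2$; equating with $v_{z_1}=1-a$ yields $1-a=v_{xy}-\frac a2$, i.e.\ $a=\frac23(1-v_{xy})$, the converse of $(\mathscr{M})$. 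The one gap to fill is the hypothesis of Lemma \ref{lem:kxanchor}(a), namely $|P_{>z_1}|+|P_{<x}|\le n-k$; but if this failed we would already be done by a direct monotonicity/escape argument as in Lemma \ref{lem:escape}. The cleanest organization is therefore: if $|P_{>z_1}|+|P_{<x}|\le n-k$, apply the anchor $e_{z_1xy}$ directly (we do not even need the transition-vector detour, since Lemma \ref{lem:kxanchor}(a) combined with Lemma \ref{lem:eqextr}(d) gives $v_{z_1}=v_{xy}-\frac a2$, and combined with the $z_1$-to-$z_2$ transition giving $v_{z_1}=1-a$ we get the converse of $(\mathscr M)$); if instead $|P_{>z_1}|+|P_{<x}|>n-k$, we are in the situation of Lemma \ref{lem:escape}(b) reversed and we push along a descending chain — but in fact $|P_{<z}|+|P_{>y}|\le n-k$ together with $|P_{>z}|+|P_{<x}|>n-k$ is exactly Lemma \ref{lem:notbothfail}(a) being satisfied, so this case is consistent and handled by the escape chain from $z_1$ down to some $z_3\in P_{<y,\parallel x}$ together with Lemma \ref{lem:kxtrans}(h). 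Part (b) is completely analogous with the roles of $x,y$ and of $(\mathscr M),(\mathscr M^*)$ swapped, using Lemma \ref{lem:escape}(b), Lemma \ref{lem:kxtrans}(h), Lemma \ref{lem:kxanchor}(b), Lemma \ref{lem:eqextr}(e), and Lemma \ref{lem:vmid}(a).

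The main obstacle I anticipate is verifying the side condition $|P_{x<\cdot<z_2}\cup P_{x<\cdot<y}|\le k-1$ of Lemma \ref{lem:kxtrans}(g) cleanly from the running hypotheses — it requires combining the covering relation $z_1\lessdot z_2$, the incomparability $z_2\parallel y$, the hypothesis $|P_{<z_1}|+|P_{>y}|\le n-k$, and Corollary \ref{cor:eqpos} in just the right way — and, relatedly, cleanly bookkeeping which of the two sub-cases ($|P_{>z_1}|+|P_{<x}|\le n-k$ or $>n-k$) one lands in, so that the escape lemma is always applicable in the direction needed to reach a genuinely ``exterior'' element carrying the value $1-a$ or $0$. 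Everything else is a routine transcription of the $P_{>x,\parallel y}$ argument in Lemma \ref{lem:mkxy}.
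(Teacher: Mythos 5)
Your overall strategy---escape lemma, transition vectors, anchor vectors---matches the paper's, but there are two substantive errors in the implementation.

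First, your proposed derivation of the side condition $|P_{x<\cdot<z_2}\cup P_{x<\cdot<y}|\le k-1$ for Lemma~\ref{lem:kxtrans}(g) does not go through. The bound $|P_{<z_2}|\le |P_{<z_1}|+1$ when $z_1\lessdot z_2$ is false in general: elements incomparable to $z_1$ can still lie below $z_2$, so a covering relation controls $P_{z_1<\cdot<z_2}$ but not $|P_{<z_2}|$. Likewise $P_{x<\cdot<y}\not\subseteq P_{<z_2}$ (elements of $P_{x<\cdot<y}$ may be incomparable to $z_2$), so the containment you posit is also wrong. The paper gets the bound differently: Lemma~\ref{lem:notbothfail}(a) applied to the pair (your $z_1$, your $z_1$) gives $|P_{>z_1}|+|P_{<x}|>n-k$, and then the three sets $P_{x<\cdot<y}\cup P_{x<\cdot<z_2}$, $P_{<x}$, $P_{>z_1}$ are pairwise disjoint (the covering relations $z_1\lessdot y$ and $z_1\lessdot z_2$ are exactly what makes $P_{x<\cdot<y}\cup P_{x<\cdot<z_2}$ disjoint from $P_{>z_1}$), so counting elements of $P$ gives the desired inequality.

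Second, and more seriously, your anchor step reaches the wrong conclusion. You escape at the top of the chain, establish $v_{z_1}=1-a$ there, and then apply the anchor $e_{z_1xy}$ (with $z_1\lessdot y$) via Lemma~\ref{lem:eqextr}(d), getting $v_{z_1}=v_{xy}-\tfrac a2$. Equating gives $1-a=v_{xy}-\tfrac a2$, i.e.\ $a=2(1-v_{xy})$. This is the negation of $(\mathscr{E}^*)$, not of $(\mathscr{M})$, so it does not prove the contrapositive you want. The paper avoids this by working with the \emph{full} chain $x\lessdot z_1\le z\le z_2\lessdot y$: it escapes at the $y$-end to $z_3\in P_{>x,\parallel y}$ and applies the transition vector there, transports the value $1-a$ all the way down the chain via Lemma~\ref{lem:vmid}(b,c), and only then applies an anchor vector at the $x$-end, namely $e_{xyz_1}$ with $x\lessdot z_1$, via Lemma~\ref{lem:kxanchor}(b) and Lemma~\ref{lem:eqextr}(e). (Note the needed hypothesis $|P_{<z_1}|+|P_{>y}|\le n-k$ holds automatically since $z_1\le z$.) This yields $1-a=v_{z_1}=v_{xy}+\tfrac a2$, i.e.\ $a=\tfrac23(1-v_{xy})$, which is precisely the converse of $(\mathscr{M})$. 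Your proposal also leaves the subcase $|P_{>z_1}|+|P_{<x}|>n-k$ unresolved; in the paper that difficulty does not arise because no anchor is ever applied at the $y$-end.
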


\begin{proof}
Let $z\in P_{x<\cdot<y}$ and
$x\lessdot z_1\le z\le z_2\lessdot y$. Suppose that
$|P_{<z}|+|P_{>y}|\le n-k$. Then Lemma \ref{lem:escape}(a) shows that 
$z_2\lessdot z_3$ for some $z_3\in P_{>x,\parallel y}$, and Lemma 
\ref{lem:notbothfail}(a) yields $|P_{>z_2}|+|P_{<x}|>n-k$. Moreover, 
note that the sets $P_{x<\cdot<y}\cup P_{x<\cdot<z_3}$, $P_{<x}$, and
$P_{>z_2}$ are disjoint. We can therefore estimate
$$
	|P_{x<\cdot<y}\cup P_{x<\cdot<z_3}| +
	n-k+1 \le |P_{x<\cdot<y}\cup P_{x<\cdot<z_3}|
	+ |P_{>z_2}|+|P_{<x}|\le n,
$$
which yields $|P_{x<\cdot<y}\cup P_{x<\cdot<z_3}| \le k-1$.
Thus Lemma \ref{lem:kxtrans}(g), Lemma \ref{lem:eqextr}(c), and
Lemma \ref{lem:vmid}(b,c) show that
$v_{z_1}=v_{z_2}=v_{z_3}=1-a$. On the other hand, as $|P_{<z}|+|P_{>y}|\le 
n-k$ clearly implies $|P_{<z_1}|+|P_{>y}|\le n-k$, Lemma 
\ref{lem:kxanchor}(b) and Lemma \ref{lem:eqextr}(e) yield
$1-a=v_{z_1}=v_{xy}+\frac{a}{2}$ which is the converse of 
$(\mathscr{M})$. 

Thus we have proved the contrapositive of part (a).
The proof of part (b) is completely analogous, but now we use
Lemma~\ref{lem:escape}(b),
Lemma~\ref{lem:notbothfail}(b),
Lemma~\ref{lem:kxtrans}(h), 
Lemma~\ref{lem:vmid}(a,c),
Lemma~\ref{lem:kxanchor}(a), and Lemma~\ref{lem:eqextr}(d).
\end{proof}

Combining Lemma \ref{lem:mkxy} and \ref{lem:mkmid}, we conclude that
$(\mathscr{M}){\Rightarrow}$\Mk\ and $(\mathscr{M}^*){\Rightarrow}$\Mkd.

\subsection{The case $a\ne\frac{1}{2}$}
\label{sec:pff}

We are now ready to prove Theorems \ref{thm:maingeom} and 
\ref{thm:mainflat}. The basic observation is the following simple fact.

\begin{lem}
\label{lem:nothalf}
If $a\ne\frac{1}{2}$, then either $(\mathscr{M})$ and $(\mathscr{E})$ 
hold, or $(\mathscr{M}^*)$ and $(\mathscr{E}^*)$ hold.
\end{lem}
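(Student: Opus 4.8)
The plan is to argue directly by a short case analysis, using only the algebraic content of Definition \ref{defn:condpf} together with the standing positivity $a>0$ (recall that $a$ is the scaling constant furnished by Proposition \ref{prop:geom}, so $a>0$ throughout this section). Suppose, for contradiction, that $a\ne\tfrac12$ but that neither ``$(\mathscr{M})$ and $(\mathscr{E})$'' nor ``$(\mathscr{M}^*)$ and $(\mathscr{E}^*)$'' holds. Abbreviating $b:=v_{xy}$, the failure of the first conjunction means that at least one of the equalities $a=\tfrac{2}{3}(1-b)$ or $a=-2b$ is in force, and the failure of the second conjunction means that at least one of $a=2b$ or $a=2(1-b)$ is in force. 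Choosing one equality from each pair produces four systems of two linear equations in the two unknowns $a$ and $b$.

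I would then simply solve each of the four systems. Three of them are incompatible with $a>0$: the pair $a=\tfrac{2}{3}(1-b)$ and $a=2(1-b)$ forces $1-b=0$, hence $a=0$; the pair $a=-2b$ and $a=2b$ forces $b=0$, hence $a=0$; and the pair $a=-2b$ and $a=2(1-b)$ reduces to $0=2$, which is absurd. The one remaining system, $a=\tfrac{2}{3}(1-b)$ together with $a=2b$, gives $b=\tfrac14$ and therefore $a=\tfrac12$. In every case we obtain a contradiction with the hypothesis $a\ne\tfrac12$ (either directly, or because $a=0$ contradicts $a>0$), which proves the lemma.

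There is no real obstacle here: the statement is a purely algebraic consequence of Definition \ref{defn:condpf} and the positivity of $a$. The only point requiring care is the bookkeeping — one must check all four combinations rather than a single representative one, and must invoke $a>0$ explicitly to discard the three degenerate solutions. Once these routine computations are recorded, the proof is complete.
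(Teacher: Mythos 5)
Your proof is correct and takes essentially the same approach as the paper: both decompose the failure of the conclusion into the same four conjunctions of negated conditions and rule each one out by solving the resulting $2\times 2$ linear system in $a$ and $v_{xy}$, using $a>0$ to discard the degenerate cases. The case bookkeeping and the algebra match the paper's argument in every detail.
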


\begin{proof} 
If the conclusion fails, then it must be the case that 
either $(\mathscr{M})$ and $(\mathscr{M}^*)$ fail, or $(\mathscr{M})$ and 
$(\mathscr{E}^*)$ fail, or $(\mathscr{E})$ and $(\mathscr{M}^*)$ fail, or 
$(\mathscr{E})$ and $(\mathscr{E}^*)$ fail. We now show that each of these 
possibilities yields a contradiction.
\begin{enumerate}[leftmargin=*,itemsep=\topsep,label=\alph*.]
\item
If $(\mathscr{M})$ and $(\mathscr{M}^*)$ both fail, then
$
	a=\frac{2}{3}(1-v_{xy})=2v_{xy}
$
implies that $a=\frac{1}{2}$, which contradicts the assumption.
\item
If $(\mathscr{M})$ and $(\mathscr{E}^*)$ both fail, then
$
	a=\frac{2}{3}(1-v_{xy})=2(1-v_{xy})
$
implies that $a=0$, which is impossible as $a>0$ by assumption.
\item
If $(\mathscr{E})$ and $(\mathscr{M}^*)$ both fail, then
$
	a=2v_{xy} = -2v_{xy}
$
implies that $a=0$, which is impossible as $a>0$ by assumption.
\item
If $(\mathscr{E})$ and $(\mathscr{E}^*)$ both fail, then
$
	-2v_{xy}=2(1-v_{xy})
$
is evidently impossible.
\end{enumerate} 
This concludes the proof.
\end{proof}

We can now conclude the following.

\begin{cor}
\label{cor:mainmain}
Assume that $N_k>0$ and $a^2N_{k+1}=aN_k=N_{k-1}$ with $a\ne\frac{1}{2}$.
Then either \Mk\ and \Ek\ hold, or \Mkd\ and \Ekd\ hold.
\end{cor}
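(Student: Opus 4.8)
The plan is to assemble Corollary~\ref{cor:mainmain} from the ingredients already in place, together with a short reduction that disposes of a boundary nuisance. The engine is Lemma~\ref{lem:nothalf}: since $a\neq\frac12$, either $(\mathscr{M})$ and $(\mathscr{E})$ both hold, or $(\mathscr{M}^*)$ and $(\mathscr{E}^*)$ both hold. In the first case I would combine the implications established in Sections~\ref{sec:pfe} and~\ref{sec:pfm}: Lemma~\ref{lem:ek}(a) shows $(\mathscr{E})$ forces $|P_{z<\cdot<y}\cup\{x\}|>k$ for every $z\in P_{<x}$, which is the first clause of \Ek, while Lemmas~\ref{lem:mkxy}(a) and~\ref{lem:mkmid}(a) together show $(\mathscr{M})$ forces $|P_{<z}|+|P_{>y}|>n-k$ for every $z\in P_{>x,\not\ge y}=P_{x<\cdot<y}\cup P_{>x,\parallel y}$, which is exactly \Mk. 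The second case is entirely symmetric via the starred lemmas, yielding \Mkd\ and the first clause of \Ekd.

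The only thing not delivered by these lemmas is the \emph{second} clause of \Ek\ (namely $|P_{<y}\cup\{x\}|>k$) and of \Ekd\ ($|P_{>x}\cup\{y\}|>k$). When $P_{<x}\neq\varnothing$ the second clause of \Ek\ is automatic from the first: choose any $z\in P_{<x}$ and note $P_{z<\cdot<y}\cup\{x\}\subseteq P_{<y}\cup\{x\}$; symmetrically for \Ekd\ when $P_{>y}\neq\varnothing$. To guarantee these nonemptiness conditions I would pass to the poset $\widehat P$ obtained by adjoining a globally minimal element $\hat 0$ and a globally maximal element $\hat 1$ to $P$. Every linear extension of $\widehat P$ places $\hat 0$ first and $\hat 1$ last, so linear extensions of $\widehat P$ biject with those of $P$ with $\widehat f(y)-\widehat f(x)=f(y)-f(x)$; hence the numbers $N_j$ are unchanged, the hypotheses $N_k>0$, $a^2N_{k+1}=aN_k=N_{k-1}$, $a\neq\frac12$, and $x\not\ge y$ transfer verbatim to $\widehat P$, and now $\widehat P_{<x},\widehat P_{>y}\neq\varnothing$.

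It then remains to verify that \Mk, \Ek, \Mkd, \Ekd\ for $\widehat P$ imply the corresponding properties for $P$, which is a routine cardinality count. One has $\widehat P_{>x,\not\ge y}=P_{>x,\not\ge y}$ (the new elements are excluded since $\hat 0<x$ and $\hat 1\geq y$), and for $z$ in this set $|\widehat P_{<z}|=|P_{<z}|+1$, $|\widehat P_{>y}|=|P_{>y}|+1$, $|\widehat P|=n+2$, so \Mk\ for $\widehat P$, which reads $|\widehat P_{<z}|+|\widehat P_{>y}|>|\widehat P|-k$, is literally $|P_{<z}|+|P_{>y}|>n-k$, i.e.\ \Mk\ for $P$. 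Likewise $\widehat P_{<x}=P_{<x}\cup\{\hat 0\}$, with $\widehat P_{z<\cdot<y}=P_{z<\cdot<y}$ for $z\in P_{<x}$ and $\widehat P_{\hat 0<\cdot<y}=P_{<y}$, so \Ek\ for $\widehat P$ yields both clauses of \Ek\ for $P$ (the instance at $z=\hat 0$ producing the second clause); the starred properties are handled identically. Applying Lemma~\ref{lem:nothalf} together with the implications above to $\widehat P$, and transporting the conclusion back to $P$, finishes the proof.

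I do not expect a genuine obstacle here: the content is a bookkeeping combination of Sections~\ref{sec:pfe}--\ref{sec:pff}. The one step demanding care is the reduction, where one must check that the constant shift by $2$ appears on both sides of every inequality in Definition~\ref{defn:conditions} and that no new element slips into $\widehat P_{>x,\not\ge y}$, $\widehat P_{<x}$, or $\widehat P_{>y}$; everything else is immediate.
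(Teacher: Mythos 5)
Your proposal is correct and follows essentially the same route as the paper's own proof: Lemma~\ref{lem:nothalf} splits into the two cases, Lemmas~\ref{lem:ek}, \ref{lem:mkxy}, \ref{lem:mkmid} deliver \Mk\ and the first clause of \Ek\ (or the starred versions), and the augmentation by $\hat 0,\hat 1$ handles the second clause of \Ek/\Ekd\ and transfers back by the same cardinality bookkeeping. The only cosmetic difference is that the paper first dispenses with the case $P_{<x},P_{>y}\ne\varnothing$ directly and augments only otherwise, whereas you augment uniformly; this has no mathematical consequence.
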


\begin{proof}
If $P_{<x}\ne\varnothing$ and $P_{>y}\ne\varnothing$, the second condition 
of \Ek\ and \Ekd\ is subsumed by the first. Then the result follows 
from Lemmas \ref{lem:nothalf}, \ref{lem:ek}, \ref{lem:mkxy}, and 
\ref{lem:mkmid}.

Otherwise, we augment the poset $P$ by adding a globally minimal and 
maximal element, i.e., $\hat P:=P\cup\{\hat 0,\hat 1\}$ with the 
additional relations $\hat 0<z<\hat 1$ for all $z\in P$. As $\hat 0$ and 
$\hat 1$ must appear at the beginning and end of every linear extension, 
the numbers $N_k$ are unchanged if we replace $P$ by 
$\hat P$. Thus we conclude that either \Mk\ and \Ek\ hold 
for $\hat P$, or \Mkd\ and \Ekd\ hold for $\hat P$. It remains to verify 
that these conditions for $\hat P$ imply the corresponding conditions for 
$P$.

To this end, note that \Ek\ for $\hat P$ states that $|\hat 
P_{z<\cdot<y}\cup\{x\}|>k$ for all $z\in \hat P_{<x}$. Applying this 
condition to $z\in P_{<x}$ yields the first part of \Ek\ for $P$, while 
applying this condition to $z=\hat 0$ yields the second part of \Ek\ for 
$P$. The proof that \Ekd\ for $\hat P$ implies \Ekd\ for $P$ is completely 
analogous. On the other hand, note that \Mk\ for $\hat P$ states that 
$|\hat P_{<z}|+|\hat P_{>y}|>n+2-k$ for all $z\in\hat P_{>x,\not\ge y}$
(as $|\hat P|=n+2$). As $\hat P_{<z}=P_{<z}\cup\{\hat 0\}$ and
$\hat P_{>y}=P_{>y}\cup\{\hat 1\}$, the validity of \Mk\ for $P$ follows 
readily. The proof that \Mkd\ for $\hat P$ implies \Mkd\ for $P$ is 
completely analogous.
\end{proof}

We now complete the proofs of Theorems \ref{thm:maingeom} and
\ref{thm:mainflat}.

\begin{proof}[Proof of Theorem \ref{thm:mainflat}]
The implications (c)$\Rightarrow$(b)$\Rightarrow$(a) were proved in 
section  \ref{sec:easy}. The implication (a)$\Rightarrow$(c) follows by 
applying Corollary \ref{cor:mainmain} with $a=1$.
\end{proof}

\begin{proof}[Proof of Theorem \ref{thm:maingeom}]
The implication (b)$\Rightarrow$(a) is trivial. Conversely, suppose that
(a) holds. Then we clearly have $a^2N_{k+1}=aN_k=N_{k-1}$ for some $a>0$ 
(as we assumed $N_k>0$). If $a=\frac{1}{2}$, we have 
$N_{k+1}=2N_k=4N_{k-1}$. If $a\ne\frac{1}{2}$, then 
Corollary~\ref{cor:mainmain} and the implication 
(c)$\Rightarrow$(a) of Theorem \ref{thm:mainflat} yield 
${N_{k+1}=N_k=N_{k-1}}$. Thus we have proved (a)$\Rightarrow$(b),
concluding the proof.
\end{proof}

\subsection{The case $a=\frac{1}{2}$}
\label{sec:pfd}

The proof of Theorem \ref{thm:maindouble}, which is concerned with the 
equality case $a^2N_{k+1}=aN_k=N_{k-1}$ for $a=\frac{1}{2}$ (i.e., 
$N_{k+1}=2N_k=4N_{k-1}$), requires us to obtain additional information on 
the structure of $P$. Let us begin by explaining why we must have 
$P_{\parallel x,\parallel y}=\varnothing$ in this case.

\begin{lem}
\label{lem:incomparable}
Let $N_k>0$ and $a^2N_{k+1}=aN_k=N_{k-1}$.
If $P_{\parallel x,\parallel y}\ne\varnothing$, then $a=1$.
\end{lem}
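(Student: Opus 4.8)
The plan is to fix the vector $v\in V$ furnished by Lemma~\ref{lem:eqextr}, pick an arbitrary element $z\in P_{\parallel x,\parallel y}$, and prove that $v_z=1-a$ and $v_z=0$ hold simultaneously; since $a>0$, this forces $a=1$. The two identities are obtained by dual arguments, so I only describe the computation of $v_z$ from above.

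The idea is to trace a saturated chain $z=z_0\lessdot z_1\lessdot\cdots\lessdot z_r$ upward from $z$, greedily choosing $z_{i+1}\in P_{\parallel x,\parallel y}$ whenever $z_i$ admits a cover in this set. Because $z_i\parallel x$ and $z_i\parallel y$, a routine check shows that every cover of $z_i$ lies in $P_{>y}\cup P_{>x,\parallel y}\cup P_{\parallel x,\parallel y}$ (it cannot be, or lie below, $x$ or $y$, and it cannot lie strictly between $x$ and $y$). Whenever $z_{i+1}\in P_{\parallel x,\parallel y}$, the transition vector $e_{z_iz_{i+1}}$ is $k$-extreme by the first alternative of Lemma~\ref{lem:kxtrans}(f), so $v_{z_i}=v_{z_{i+1}}$ by Lemma~\ref{lem:eqextr}(c); hence $v_z=v_{z_r}$. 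The chain terminates at some $z_r\in P_{\parallel x,\parallel y}$ which is either a maximal element of $P$ — in which case $v_{z_r}=1-a$ by Lemma~\ref{lem:eqextr}(a) — or has all of its covers in $P_{>y}\cup P_{>x,\parallel y}$. If some cover $z'$ of $z_r$ lies in $P_{>x,\parallel y}$, then $e_{z_rz'}$ is $k$-extreme by Lemma~\ref{lem:kxtrans}(f), so $v_{z_r}=v_{z'}=1-a$ using Lemma~\ref{lem:eqextr}(c) and Lemma~\ref{lem:vmid}(b). The remaining possibility is that every cover of $z_r$ lies in $P_{>y}$; here I claim $P_{>z_r}\subseteq P_{>y}$, so that $e_{z_rz'}$ for any cover $z'$ is $k$-extreme by the first alternative of Lemma~\ref{lem:kxtrans}(e), giving $v_{z_r}=v_{z'}=1-a$ via Lemma~\ref{lem:eqextr}(c) and Lemma~\ref{lem:vend}(b).

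The main obstacle is exactly this last claim, which is the one genuinely non-bookkeeping step: Lemma~\ref{lem:kxtrans}(e) requires the \emph{global} hypothesis $P_{>z_r}\subseteq P_{>y}$, not merely that one cover of $z_r$ lies in $P_{>y}$. To establish it, I would argue by contradiction: if $w>z_r$ with $w\notin P_{>y}$, then since $z_r\parallel y$ one checks $w\parallel y$, and the first cover $c_1$ of $z_r$ along a saturated chain from $z_r$ up to $w$ satisfies $c_1\le w$, hence $c_1\not>y$, contradicting $c_1\in P_{>y}$. Finally, the downward computation giving $v_z=0$ is entirely dual: one builds a saturated chain downward from $z$, greedily staying in $P_{\parallel x,\parallel y}$, uses the second alternatives of Lemma~\ref{lem:kxtrans}(e),(f) for the transitions, and terminates at a minimal element of $P$ (Lemma~\ref{lem:eqextr}(b)), an element of $P_{<y,\parallel x}$ (Lemma~\ref{lem:vmid}(a)), or an element $z_r$ with $P_{<z_r}\subseteq P_{<x}$ (Lemma~\ref{lem:vend}(a)), in each case yielding $v_z=v_{z_r}=0$. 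Combining $v_z=1-a$ with $v_z=0$ gives $a=1$.
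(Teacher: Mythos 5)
Your proof is correct and follows essentially the same strategy as the paper's: trace a saturated chain of elements of $P_{\parallel x,\parallel y}$ up to a maximal one and down to a minimal one, propagate the value of $v$ along the chain via Lemma~\ref{lem:kxtrans}(f) and Lemma~\ref{lem:eqextr}(c), and handle the termination cases (maximal/minimal in $P$, a cover in $P_{>x,\parallel y}$ resp.\ $P_{<y,\parallel x}$, or $P_{>z_r}\subseteq P_{>y}$ resp.\ $P_{<z_r}\subseteq P_{<x}$) using parts (e),(f) of Lemma~\ref{lem:kxtrans} together with Lemmas~\ref{lem:vmid}, \ref{lem:vend}, and \ref{lem:eqextr}(a),(b). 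Your explicit verification that $P_{>z_r}\subseteq P_{>y}$ via the first cover along a chain to $w$ fills in a step the paper leaves implicit; otherwise the two arguments are the same.
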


\begin{proof}
If $P_{\parallel x,\parallel y}\ne\varnothing$, there is a chain
$z_0\lessdot z_1\lessdot \cdots\lessdot z_r$ from a minimal element $z_0$ 
to a maximal element $z_r$ of $P_{\parallel x,\parallel y}$. Lemma 
\ref{lem:kxtrans}(f) and Lemma \ref{lem:eqextr}(c) yield
$v_{z_0}=\cdots=v_{z_r}$. To conclude the proof, we show that
$v_{z_0}=0$ and $v_{z_r}=1-a$.

If $z_r$ is maximal in $P$, then $v_{z_r}=1-a$ by 
Lemma~\ref{lem:eqextr}(a). Otherwise, if there exists $z_r\lessdot z\in 
P_{\parallel y}$, we must have $z\in P_{>x,\parallel y}$ (as $z\in 
P_{\parallel x}$ would contradict maximality of $z_r$ in $P_{\parallel 
x,\parallel y}$, while $z_r\lessdot z\in P_{\le x}$ would contradict 
$z_r\in P_{\parallel x}$). Then Lemma~\ref{lem:kxtrans}(f), 
Lemma~\ref{lem:eqextr}(c), and Lemma~\ref{lem:vmid}(b) yield 
$v_{z_r}=v_z=1-a$. Finally, if $z_r$ is not maximal in $P$ and there does 
not exist $z_r\lessdot z\in P_{\parallel y}$, we must have 
$P_{>z_r}\subseteq P_{>y}$ (as $z_r< z\in P_{\le y}$ would contradict 
$z_r\in P_{\parallel y}$). Then Lemma~\ref{lem:kxtrans}(e), 
Lemma~\ref{lem:eqextr}(c), and Lemma~\ref{lem:vend}(b) yield 
$v_{z_r}=1-a$. The proof that $v_{z_0}=0$ is completely analogous. 
\end{proof}

Next, we prove a result that will be used to show that 
$P_{x<\cdot<y}=\varnothing$ when $a=\frac{1}{2}$. We state a slightly more 
general form than is needed in the proof of Theorem~\ref{thm:maindouble}, 
as it will provide some additional information (Lemma \ref{lem:mutex} 
below): in essence, we show that \Mk\ and \Ekd\ (and analogously \Mkd\ and 
\Ek) cannot both hold.

\begin{lem}
\label{lem:midnono}
The following hold.
\begin{enumerate}[leftmargin=*,itemsep=\topsep,label=\alph*.]
\item Let $x\lessdot z\in P_{>x,\not\ge y}$ and 
$y\lessdot z'$. If $|P_{<z}|+|P_{>y}|>n-k$, then
$|P_{x<\cdot<z'}\cup\{y\}|\le k$.
\item Let $y\gtrdot z\in P_{<y,\not\le x}$ and 
$z'\lessdot x$. If $|P_{>z}|+|P_{<x}|>n-k$, then
$|P_{z'<\cdot<y}\cup\{x\}|\le k$.
\end{enumerate}
\end{lem}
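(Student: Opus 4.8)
The plan is to prove Lemma \ref{lem:midnono} by a direct combinatorial counting argument. In contrast to the preceding lemmas of this section, the statement makes no assumption about $N_k$ or about equality in the Kahn-Saks inequality, and accordingly the vector $v$ of Lemma \ref{lem:eqextr} plays no role here: the conclusion is forced purely by the covering and incomparability relations appearing in the hypotheses. By the evident symmetry (reversing the partial order of $P$ and interchanging the roles of $x$ and $y$), it suffices to prove part (a).

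To this end I would exhibit three \emph{pairwise disjoint} subsets of $P$, namely $P_{<z}$, $P_{>y}$, and $P_{x<\cdot<z'}\cup\{y\}$. The disjointness is read off from the hypotheses as follows. Since $x\lessdot z$, we have $P_{x<\cdot<z}=\varnothing$, hence $P_{<z}\cap P_{>x}=\varnothing$. Since $y\lessdot z'$, we have $P_{y<\cdot<z'}=\varnothing$, hence no element of $P_{>y}$ is smaller than $z'$. Finally, since $z\not\ge y$, we have $y\notin P_{<z}$, and also $P_{<z}\cap P_{>y}=\varnothing$ because any element lying below $z$ and above $y$ would force $z\ge y$. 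As $P_{x<\cdot<z'}\subseteq P_{>x}$ and every element of $P_{x<\cdot<z'}$ is smaller than $z'$, the first two observations give $P_{x<\cdot<z'}\cap(P_{<z}\cup P_{>y})=\varnothing$; combining this with $y\notin P_{<z}\cup P_{>y}$ and $P_{<z}\cap P_{>y}=\varnothing$ shows that the three sets above are indeed pairwise disjoint.

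Since these three sets are pairwise disjoint subsets of the $n$-element poset $P$, it follows that
$$
|P_{x<\cdot<z'}\cup\{y\}| \;\le\; n - |P_{<z}| - |P_{>y}| \;<\; n-(n-k) \;=\; k,
$$
where the strict inequality uses the hypothesis $|P_{<z}|+|P_{>y}|>n-k$; this proves (a), in fact with the slightly stronger bound $|P_{x<\cdot<z'}\cup\{y\}|\le k-1$. Part (b) follows in exactly the same way, using instead the pairwise disjoint triple $P_{>z}$, $P_{<x}$, $P_{z'<\cdot<y}\cup\{x\}$ together with the relations $z\lessdot y$, $z'\lessdot x$, and $z\not\le x$. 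I do not anticipate any substantive obstacle; the one point that must be handled carefully is the passage from each covering or incomparability hypothesis to the emptiness of the relevant interval, since the clean counting bound breaks down if any of the three disjointness claims is mishandled. I would therefore verify each of them explicitly before assembling the final inequality.
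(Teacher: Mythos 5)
Your proof is correct and follows essentially the same counting argument as the paper: in both cases one identifies a triple of pairwise disjoint subsets of $P$ and compares the sum of their cardinalities to $n$. The paper uses the triple $P_{>x,\not>y}$, $P_{<z}$, $P_{>y}$ together with the inclusion $P_{x<\cdot<z'}\subseteq P_{>x,\not>y}$, whereas you work directly with $P_{x<\cdot<z'}\cup\{y\}$, $P_{<z}$, $P_{>y}$; this is a cosmetic variation (and incidentally yields the slightly sharper $\le k-1$, though $\le k$ is all that is used downstream).
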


\begin{proof}
Suppose that $x\lessdot z\in P_{>x,\not\ge y}$ satisfies
$|P_{<z}|+|P_{>y}|>n-k$, and let $y\lessdot z'$.
As the sets $P_{>x,\not>y}$, $P_{<z}$, and $P_{>y}$
are disjoint, and as $P_{x<\cdot<z'}\subseteq P_{>x,\not> y}$, we have
$$
	|P_{x<\cdot<z'}| +
	n-k+1
	\le 
	|P_{>x,\not> y}| + |P_{<z}| + |P_{>y}| \le n,
$$
so that $|P_{x<\cdot<z'}|\le  k-1$. Thus $|P_{x<\cdot<z'}\cup\{y\}|\le k$, 
which completes the proof of part (a). The proof of part (b) is completely 
analogous.
\end{proof}

We can now prove Theorem \ref{thm:maindouble}.

\begin{proof}[Proof of Theorem \ref{thm:maindouble}]
The implications (c)$\Rightarrow$(b)$\Rightarrow$(a) were proved in 
section \ref{sec:easy}, so it remains to prove the implication 
(a)$\Rightarrow$(c). By the same augmentation argument as in the proof of 
Corollary \ref{cor:mainmain}, we can assume without loss of generality
in the remainder of the proof that $P_{<x}\ne\varnothing$ and 
$P_{>y}\ne\varnothing$.

By assumption, we have $N_k>0$ and $a^2N_{k+1}=aN_k=N_{k-1}$ with 
$a=\frac{1}{2}$. Thus the conditions of Definition \ref{defn:condpf} 
reduce to
$$
	(\mathscr{M})\Leftrightarrow (\mathscr{M}^*)\Leftrightarrow
	v_{xy}\ne \tfrac{1}{4},\qquad
	(\mathscr{E})\Leftrightarrow v_{xy}\ne -\tfrac{1}{4},\qquad
	(\mathscr{E}^*) \Leftrightarrow v_{xy}\ne\tfrac{3}{4}.
$$
If $v_{xy}\ne\frac{1}{4}$, then $(\mathscr{M})$, $(\mathscr{M}^*)$, and 
either $(\mathscr{E})$ or $(\mathscr{E}^*)$ hold. Then Lemmas 
\ref{lem:ek}, \ref{lem:mkxy}, and \ref{lem:mkmid} and the implication 
(c)$\Rightarrow$(a) of Theorem \ref{thm:mainflat}
yield $N_{k+1}=N_k=N_{k-1}$, which contradicts the 
assumption. Thus we must have $v_{xy}=\frac{1}{4}$. We therefore conclude
that both $(\mathscr{E})$ and $(\mathscr{E}^*)$ hold, which implies
using Lemma \ref{lem:ek} (and as we assumed that $P_{<x},P_{>y}$ are 
nonempty) that \Ek\ and \Ekd\ both hold.

That $P_{\parallel x,\parallel y}=\varnothing$ was shown in Lemma 
\ref{lem:incomparable}. We claim that also $P_{x<\cdot<y}=\varnothing$. 
Indeed, if the latter does not hold, then there exist $x\lessdot z_1\le 
z_2\lessdot y$, while there exist $z'\lessdot x$ and $y\lessdot z''$ as we 
assumed that $P_{<x},P_{>y}$ are nonempty. By Lemma \ref{lem:notbothfail}, 
we have either $|P_{<z_1}|+|P_{>y}|>n-k$ or $|P_{>z_2}|+|P_{<x}|>n-k$. 
Thus Lemma \ref{lem:midnono} shows that either 
$|P_{z'<\cdot<y}\cup\{x\}|\le k$ or $|P_{x<\cdot<z''}\cup\{y\}|\le k$. But 
this contradicts the validity of \Ek\ and \Ekd, establishing the claim.

It remains to prove that \Ck\ holds. Suppose to the contrary that there 
exist $z\in P_{<y,\parallel x}$, $z'\in P_{>x,\parallel y}$ with $z<z'$ so 
that $|P_{z<\cdot<y}|+|P_{x<\cdot<z'}|\le k-2$.
As $P_{\parallel x,\parallel y}=P_{x<\cdot<y}=\varnothing$, we must have
$P_{z<\cdot<z'}\subset P_{<y,\parallel x}\cup P_{>x,\parallel y}$.
Thus there must exist $z\le z_1\lessdot z_2\le z'$ so that
$z_1\in P_{<y,\parallel x}$ and $z_2\in P_{>x,\parallel y}$, and
$$
	|P_{z_1<\cdot<y}|+|P_{x<\cdot<z_2}|\le
	|P_{z<\cdot<y}|+|P_{x<\cdot<z'}|\le k-2.
$$
Consequently $e_{z_1z_2}$ is $k$-extreme by Lemma \ref{lem:kxtrans}(i). 
But then Lemma \ref{lem:eqextr}(c) and Lemma \ref{lem:vmid}(a,b) yield
$0=v_{z_1}=v_{z_2}=1-a$, which contradicts the assumption that 
$a=\frac{1}{2}$. Thus \Ck\ must hold, concluding the proof.
\end{proof}

We conclude this section with an additional fact that is not needed 
in the proofs of our main results, but helps clarify the 
conditions of Theorems \ref{thm:mainflat} and \ref{thm:maindouble}.

\begin{lem}
\label{lem:mutex}
The following hold.
\begin{enumerate}[leftmargin=*,itemsep=\topsep,label=\alph*.]
\item If \Mk\ holds, then \Ekd\ must fail.
\item If \Mkd\ holds, then \Ek\ must fail.
\end{enumerate}
\end{lem}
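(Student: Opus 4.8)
The plan is to prove part (a)---that \Mk\ forces the failure of \Ekd---by contradiction: assuming both hold, I will produce a configuration that violates the elementary counting identity already isolated in Lemma \ref{lem:midnono}. Part (b) is the mirror image, obtained by interchanging the roles of $x$ and $y$ (and reversing the order of $P$), using Lemma \ref{lem:midnono}(b) in place of (a) and \Mkd, \Ek\ in place of \Mk, \Ekd. The underlying mechanism is that \Mk\ makes the downset $P_{<z}$ of a cover $x\lessdot z$ with $z\not\ge y$ large, whereas \Ekd\ makes the interval below a cover $y\lessdot z'$ large; Lemma \ref{lem:midnono}(a) says precisely that these two conditions are incompatible.

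So assume \Mk\ and \Ekd. The second clause of \Ekd\ gives $|P_{>x}\cup\{y\}|>k\ge 2$, in particular $P_{>x}\ne\varnothing$. The main case is the one where both $P_{>y}\ne\varnothing$ and $P_{>x,\not\ge y}\ne\varnothing$: I would choose $z'$ minimal in $P_{>y}$ and $z$ minimal in $P_{>x,\not\ge y}$, noting that minimality forces $x\lessdot z$ (any element strictly between $x$ and $z$ is $>x$, and is $\not\ge y$ since it lies below $z$). Then \Mk\ applied to $z$ is exactly the hypothesis $|P_{<z}|+|P_{>y}|>n-k$ of Lemma \ref{lem:midnono}(a), whose conclusion $|P_{x<\cdot<z'}\cup\{y\}|\le k$ contradicts the first clause of \Ekd\ applied to $z'\in P_{>y}$.

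It then remains to rule out the two degenerate cases, which I expect to be the only place requiring any care. If $P_{>x,\not\ge y}=\varnothing$, then every element above $x$ lies above $y$, so $P_{>x}\cup\{y\}\subseteq P_{\ge y}$ and hence $P_{>y}\ne\varnothing$; for $z'$ minimal in $P_{>y}$, any $w$ with $x<w<z'$ satisfies $w\ge y$, and together with $w<z'$ and $y\lessdot z'$ this forces $w=y$, so $P_{x<\cdot<z'}\subseteq\{y\}$, again contradicting the first clause of \Ekd. If $P_{>y}=\varnothing$, then $y$ is maximal and $P_{>x,\not\ge y}=P_{>x}\setminus\{y\}$, which is nonempty (otherwise $|P_{>x}\cup\{y\}|=1\le k$); choosing $z$ minimal in it again gives $x\lessdot z$, and \Mk\ yields $|P_{<z}|>n-k$, while $P_{<z}$ is disjoint from $P_{>x}\cup\{y\}$ (its intersection with $P_{>x}$ is empty because $x\lessdot z$, and $y\notin P_{<z}$ because $y$ is maximal and $z\ne y$), so that $n\ge |P_{<z}|+|P_{>x}\cup\{y\}|>(n-k)+k=n$, a contradiction. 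A cleaner but less self-contained alternative would be to first reduce to the case $P_{<x},P_{>y}\ne\varnothing$ by the augmentation used in the proof of Corollary \ref{cor:mainmain}, which makes both degenerate cases disappear; I would carry out the direct argument above instead, as it needs nothing beyond Lemma \ref{lem:midnono}.
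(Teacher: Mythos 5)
Your proof is correct and follows essentially the same route as the paper's: a case split that applies Lemma \ref{lem:midnono}(a) in the main case and handles the degenerate cases by direct counting. The paper splits into four cases according to whether $P_{>x,\not\ge y}$ and $P_{>y}$ are empty; you handle the main case identically (choosing minimal elements to produce the required covers $x\lessdot z$, $y\lessdot z'$ and feeding \Mk{} into Lemma \ref{lem:midnono}(a)), and you merge the paper's last two cases by first deducing from \Ekd{} that $P_{>x,\not\ge y}=\varnothing$ forces $P_{>y}\ne\varnothing$---a small reorganization that does not change the substance. Your treatment of the case $P_{>y}=\varnothing$ is a slight variant of the paper's (you observe that $P_{<z}$ is disjoint from $P_{>x}\cup\{y\}$ rather than just $P_{>x}$, which closes the counting in one line), and the reduction of part (b) by duality is exactly the "completely analogous" step the paper takes. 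No gaps.
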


\begin{proof}
Suppose that \Mk\ holds. We consider four cases.
\begin{enumerate}[leftmargin=*,itemsep=\topsep,label=\textbullet]
\item
If $P_{>x,\not\ge y}\ne\varnothing$ and
$P_{>y}\ne\varnothing$, then \Ekd\ must fail by Lemma 
\ref{lem:midnono}(a).
\item
If $P_{>x,\not\ge y}\ne\varnothing$ and
$P_{>y}=\varnothing$, then \Mk\ implies $|P_{<z}|>n-k$ for $x\lessdot z\in 
P_{>x,\not\ge y}$. As $P_{<z}$ and $P_{>x}$ are disjoint, 
$n-k+|P_{>x}|<|P_{<z}|+|P_{>x}|\le n$ contradicts \Ekd. 
\item If $P_{>x,\not\ge y}=\varnothing$ and $P_{>y}\ne\varnothing$, then
$|P_{x<\cdot<z}\cup\{y\}|=1$ for $y\lessdot z$ contradicts \Ekd.
\item Finally, if
$P_{>x,\not\ge y}=P_{>y}=\varnothing$, then
$|P_{>x}\cup\{y\}|=1$ which contradicts \Ekd. 
\end{enumerate}
This proves part (a). The 
proof of part (b) is completely analogous.
\end{proof}

\subsection{An explicit example}
\label{sec:weird}

A surprising aspect of the results of this paper is that the equality 
cases of the Alexandrov-Fenchel inequality that arise here need not 
respect the lattice structure of the underlying polytopes, as was 
discussed in section \ref{sec:transscale}. The following simple example
illustrates this phenomenon.

Consider the poset $P$ with $|P|=6$ defined by the relations
$$
	z_1\lessdot z_2\lessdot y,\qquad
	x\lessdot z_3\lessdot z_4.
$$
It is readily verified that Theorem \ref{thm:maindouble} yields a doubling 
progression for $k=2$; in fact, we manually compute $N_1=1$, $N_2=2$, 
$N_3=4$.

In this example, the polytopes of Kahn and Saks are given by
\begin{align*}
	K &= \{t\in\mathbb{R}^P:
	0\le t_{z_1}\le t_{z_2}\le t_y=t_x\le t_{z_3}\le t_{z_4}\le 1\},\\
	L &= \{t\in\mathbb{R}^P:
	0\le t_{z_1}\le t_{z_2}\le t_y=1,~
	0=t_x\le t_{z_3}\le t_{z_4}\le 1\}.
\end{align*}
Now note that Proposition \ref{prop:geom} applies with $a=\frac{1}{2}$, so 
that $h_K(u) = h_{aL+v}(u)$ for all $k$-extreme vectors $u$. We claim that 
the translation vector $v$ may be chosen as 
$$
	v_{z_1}=v_{z_2}=v_y=0,\qquad
	v_{z_3}=v_{z_4}=v_x=\tfrac{1}{2}.
$$
This can be read off from the proof of our main results.
Indeed, the values of $v_{z_i}$ are given by Lemma \ref{lem:vmid}.
On the other hand, the proof of Theorem \ref{thm:maindouble} shows that
$v_{xy}=\frac{1}{4}$, so it is compatible with Proposition \ref{prop:geom}  
to choose $v_y=0$ and $v_x=\frac{1}{2}$ (this choice yields $v\not\in V$, 
but this is irrelevant by Remark \ref{rem:dontworry}; the present choice 
was made to make the equality condition easiest to visualize).

From these computations, it is readily seen that
\begin{align*}
	aL+v &=
	\{t\in\mathbb{R}^P:
	0\le t_{z_1}\le t_{z_2}\le t_y=\tfrac{1}{2}=
	t_x\le t_{z_3}\le t_{z_4}\le 1\} \\ &=
	K \cap \{t\in\mathbb{R}^P:t_x=t_y=\tfrac{1}{2}\}.
\end{align*}
Even though the polytopes $K$ and $aL+v$ do not coincide, the proof of our 
main results shows that they must have the same supporting hyperplanes in 
all $k$-extreme directions. On the other hand, note that $aL+v$ is 
obtained by intersecting $K$ by a non-lattice hyperplane, so that the 
equality condition of the Alexandrov-Fenchel inequality does not respect 
the lattice structure of the polytopes $K,L$.

\subsection*{Acknowledgments}

R.v.H.\ was supported in part by NSF grants DMS-1811735, DMS-2054565, 
and DMS-2347954. 
This work was done while A.Y.\ and X.Z.\ were at Princeton University, 
where A.Y.\ was supported in part by summer research funding from the 
Department of Mathematics. The authors thank Swee Hong Chan, Igor Pak, 
Greta Panova, and Yair Shenfeld for many enlightening conversations, and 
the anonymous referees for their helpful comments and suggestions.

\bibliographystyle{abbrv}
\bibliography{ref}

\end{document}